\definecolor{deepblue}{rgb}{0,0,0.5}
\definecolor{deepred}{rgb}{0.6,0,0}
\definecolor{deepgreen}{rgb}{0,0.5,0}
\definecolor{ao(english)}{rgb}{0.0, 0.5, 0.0}
\newcommand{\im}{\mathrm{i}}
\newcommand{\E}{\mathrm{e}}
\def\Xint#1{\mathchoice
{\XXint\displaystyle\textstyle{#1}}%
{\XXint\textstyle\scriptstyle{#1}}%
{\XXint\scriptstyle\scriptscriptstyle{#1}}%
{\XXint\scriptscriptstyle\scriptscriptstyle{#1}}%
\!\int}
\def\XXint#1#2#3{{\setbox0=\hbox{$#1{#2#3}{\int}$}
\vcenter{\hbox{$#2#3$}}\kern-.5\wd0}}
\def\dashint{\Xint-}
\newcommand*\pFq[6][8]{%
  \begingroup 
  \pFqmuskip=#1mu\relax
  \mathchardef\normalcomma=\mathcode`,
  \mathcode`\,=\string"8000
  \begingroup\lccode`\~=`\,
  \lowercase{\endgroup\let~}\pFqcomma
  {}_{#2}F_{#3}{\left(\genfrac..{0pt}{}{#4}{#5};#6\right)}%
  \endgroup
}
\newcommand{\pFqcomma}{{\normalcomma}\mskip\pFqmuskip}
\algnewcommand{\Initialize}[1]{
  \State \textbf{Initialize:}
  \Statex \hspace*{\algorithmicindent}\parbox[t]{.8\linewidth}{\raggedright #1}
}
\algnewcommand{\Indent}[2]{
  \State {#1}
  \vspace{-2mm}
  \Statex \hspace*{\algorithmicindent}\parbox[t]{.9\linewidth}{\raggedright #2}
}
\newtheorem{theorem}{Theorem}[section]
\newtheorem{lemma}{Lemma}[section]
\newtheorem{definition}{Definition}[section]
\newtheorem{corollary}{Corollary}[section]
\newtheorem{remark}{Remark}[section]
\numberwithin{equation}{section}
\title{Explicit fractional Laplacians\\ and Riesz potentials of classical functions}
\author{Timon S.~Gutleb \thanks{School of Computer Science, University of Leeds, UK, {\tt t.s.gutleb@leeds.ac.uk}} \and Ioannis P.~A.~Papadopoulos\thanks{Mathematical Institute, University of Oxford, UK,  \tt{ioannis.papadopoulos@maths.ox.ac.uk}.}}
\date{}
\begin{document}

\maketitle
\thispagestyle{empty}
\pagestyle{fancy}

\begin{abstract}
We prove and collect numerous explicit and computable results for the fractional Laplacian $(-\Delta)^s f(x)$ with $s>0$ as well as its whole space inverse, the Riesz potential, $(-\Delta)^{-s}f(x)$ with $s\in\left(0,\frac{1}{2}\right)$, subject to row-specific parameter and integrability conditions. Choices of $f(x)$ include weighted classical orthogonal polynomials such as the Legendre, Chebyshev, Jacobi, Laguerre and Hermite polynomials, or first and second kind Bessel functions with or without sinusoid weights. Some higher dimensional fractional Laplacians and Riesz potentials of generalized Zernike polynomials on the unit ball and its complement as well as whole space generalized Laguerre polynomials are also discussed. The aim of this paper is to aid in the continued development of numerical methods for problems involving the fractional Laplacian or the Riesz potential in bounded and unbounded domains -- both directly by providing useful basis or frame functions for spectral method approaches and indirectly by providing accessible ways to construct computable synthetic problems on which to test new numerical methods.
\end{abstract}

\section{Introduction}
Fractional calculus has been a topic of increasing research interest in both pure and applied mathematics in the past decades. Many natural science and engineering applications use fractional differential equations and the fractional Laplacian specifically, among them e.g.~models for acoustic waves in viscoelastic media with important applications in ultrasound-based cancer treatments and photoacoustic imaging cf.~\cite{treeby2018rapid,treeby2014modelling,treeby2012modeling,treeby2010k,gutleb2023static,king_modelling_2024}. The Riesz potential is an operator closely related to the fractional Laplacian which similarly enjoys numerous applications cf.~\cite{Jcarrilloradial}.  Some of the most successful and widely used numerical methods for fractional differential equations rely on approximation with simpler functions (e.g.~spectral methods). In light of the rising interest in numerical methods and simulations involving these nonlocal operators, the aim of this paper is to give explicit forms for the fractional Laplacian and Riesz potential of widely used basis functions in one dimension as well as certain geometries in arbitrary dimensions which are useful in numerical practice. These explicit expressions may be used as part of a spectral method to solve fractional differential equations numerically, cf.~\cref{sec:numerical} and \cite{papadopoulos2023frame}, or to construct non-trivial toy problems to test novel numerical approaches against. We give brief descriptions and detailed references for all of the classical functions discussed. A companion Julia package, \href{https://github.com/TSGut/ExplicitFractionalLaplacianRieszExamples.jl}{\texttt{ExplicitFractionalLaplacianRieszExamples.jl}}~\cite{explicitfractionallaplacianrieszexamples-zenodo}, provides deterministic numerical tests of the displayed formulas using \texttt{MeijerG.jl}~\cite{meijergjl-zenodo}.

The most common workflow for computing the explicit expressions is depicted in \cref{fig:workflow}. The goal is to write the simpler function as a Meijer-G function, apply the fractional Laplacian via \cref{thm:meijergDKKtheorem1}, and then rewrite the explicit resultant expression in terms of simpler functions once more.
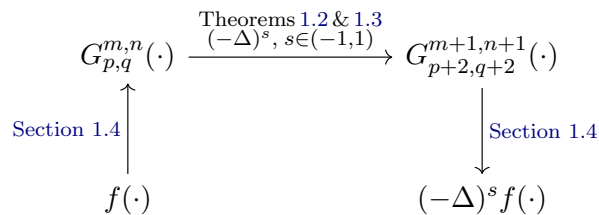
\begin{figure}[h!]
\centering
\begin{tikzcd}[column sep=7em, row sep=3em]
G^{m,n}_{p,q}(\cdot)  \arrow{r}{\substack{\text{Theorems} \, \labelcref{thm:meijergDKKtheorem1} \, \& \, \labelcref{thm:meijergDKKtheorem2}\\(-\Delta)^s}} &  G^{m+1,n+1}_{p+2,q+2}(\cdot)  \arrow{d}{\text{\cref{sec:meijergforms}}} \\%
f(\cdot) \arrow{u}{\text{\cref{sec:meijergforms}}} & (-\Delta)^s f(\cdot)
\end{tikzcd}
\caption{The primary workflow for computing explicit fractional Laplacians and Riesz potentials of classical functions.} \label{fig:workflow}
\end{figure}

With numerical applications and implementations in mind, wherever possible, we will forgo notational abstractions which are often introduced in the literature for the purpose of making results appear more compact, e.g.~compressing repeating expressions into a new variable. All of our explicit formulae in one dimension will thus only depend on the order $s$ of the fractional Laplacian (or Riesz potential), the degree or order $n$ (or $\nu$) of the weighted classical polynomials and Bessel functions of first and second kind, as well as any basis parameters that are inherently present such as $a$ and $b$ for the Jacobi polynomials $P_n^{(a,b)}(x)$. While this choice makes some expressions longer, we believe it ultimately makes them less obfuscated, easier to implement and thus more useful in applications.

The sections of this paper are organized as follows: In \cref{sec:meijergintro} we introduce the Meijer-G function and discuss its properties which are relevant to our use case. In \cref{sec:fractionallapandriesz} we introduce definitions for the fractional Laplacian and Riesz potential. \cref{sec:briefhistory} gives a brief historical overview of explicit form solutions for the fractional Laplacian and Riesz potential and introduces the main underlying theorem due to Dyda, Kuznetsov and Kwa\`snicki \cite{dyda2017fractional}. In Sections \ref{sec:classicalintro} and \ref{sec:besselintro} we introduce the classical orthogonal polynomial bases and Bessel functions, respectively, for which we provide explicit form fractional Laplacians and Riesz potentials. \cref{sec:meijergforms} shows how to cast the above-mentioned functions into the form of a Meijer-G function which is then used in \cref{sec:fractionalexplicitforms1D} to give explicit fractional Laplacian and Riesz potential formulas in one dimension. \cref{sec:fractionalexplicitformshighD} provides related explicit expressions in higher dimensional geometries. Finally, \cref{sec:numerical} gives some simple examples on how to apply these formulas to solve certain fractional differential equations, with references to more complicated examples.

\subsection{Hypergeometric functions and Meijer-G functions}\label{sec:meijergintro}
The Meijer-G function is one of many available generalizations of hypergeometric functions, which themselves are one of many available generalizations of classical orthogonal polynomials. The Meijer-G includes a vast range of functions as special cases, many of which may be found in the excellent formula collections due to Prudnikov, Byrchkov and Marichev \cite[Ch.~8.4]{prudnikovVol3}. In this section we provide definitions as well as general characteristics and tools for working with these functions and end with a brief overview of recent numerical methods for computing hypergeometric functions. For an introduction to the world of Meijer-G functions we also refer to \cite{beals2013meijer}.
\begin{definition}[Mellin--Barnes integral]
A Mellin--Barnes integral is a complex contour integral whose integrands are products of gamma functions with reciprocals of gamma functions multiplied by $z^{s}$, that is integrals of the form:
\begin{align*}
\frac{1}{2\pi \im} \int_C \frac{\prod_{r=1}^{m} \Gamma(b_r-\beta_r s) \prod_{r=1}^n \Gamma(1+\alpha_r s-a_r)}{\prod_{r=m+1}^q \Gamma(1+\beta_r s-b_r) \prod_{r=n+1}^p \Gamma(a_r-\alpha_rs)}z^s \mathrm{d}s,
\end{align*}
where $m,n,p,q \in \mathbb{N}_0$ , $0 \leq m \leq q$, $0\leq n\leq p$, $\alpha_r, \beta_r > 0$, $\Gamma(\cdot)$ denotes the $\Gamma$-function, and the contour $C$ is chosen such that the poles of the terms $\Gamma(b_r - \beta_r s)$ with $1\leq r\leq m$ are separated from those of $\Gamma(1+\alpha_r s -a_r)$ with $1\leq r \leq n$, cf.~\cite[2.4]{paris2001asymptotics}. 
\end{definition}
\begin{definition}[Meijer-G function]
Let $a_1,a_2,...,a_p$ and $b_1,b_2,...,b_q$ be real or complex, $m,n,p,q \in \mathbb{N}_0$, $m \in [0,q]$ and $n \in [0,p]$ and let none of $a_k-b_j$ be positive integers for $1\leq k \leq n$ and $1\leq j \leq m$. Meijer-G functions are defined to be Mellin--Barnes integrals with $\beta_r = \alpha_r = 1$, i.e.
\begin{align*}
{G^{m,n}_{p,q}}\left(z \left| \begin{matrix}
\mathbf{a} \\ \mathbf{b}
\end{matrix}\right.\right)&={G^{m,n}_{p,q}}\left(z \left| {a_%
{1},\dots,a_{p}\atop b_{1},\dots,b_{q}}\right. \right)\\&\coloneqq \frac{1}{2\pi \im} \int_C \frac{\prod_{r=1}^{m} \Gamma(b_r-s) \prod_{r=1}^n \Gamma(1+ s-a_r)}{\prod_{r=m+1}^q \Gamma(1+ s-b_r) \prod_{r=n+1}^p \Gamma(a_r-s)}z^s \mathrm{d}s.
\end{align*}
\end{definition}
The notation $(\cdot)_k$ in the below definition of ${{}_{p}F_{q}}$ will reappear frequently in this paper and denotes the Pochhammer symbol, cf.~\cite[5.2(iii)]{nist_2018}. As multiple objects exist in the literature with similar notation and names, we include a definition here.
\begin{definition}[Pochhammer symbol] The Pochhammer symbol (or `rising factorial') is defined to be
\begin{align*}
(z)_k \coloneqq \prod_{r=0}^{k-1} (z+r).
\end{align*}
\end{definition}
\begin{remark}
If $z$ is not a non-positive integer, i.e.~$(-z) \notin \mathbb{N}_0$, the Pochhammer symbol can be computed via
\begin{align*}
(z)_k = \frac{\Gamma(z+k)}{\Gamma(z)}.
\end{align*}
\end{remark}
\begin{definition}[Hypergeometric ${}_pF_q$]
We define the ${}_pF_q$ hypergeometric function to be
\begin{align*}
{{}_{p}F_{q}}\left(\left.{a_{1},\dots,a_{p}\atop b_{1},\dots,b_{q}}\right|z\right)\coloneqq\sum_{k%
=0}^{\infty}\frac{{\left(a_{1}\right)_{k}}\cdots{\left(a_{p}\right)_{k}}}{{%
\left(b_{1}\right)_{k}}\cdots{\left(b_{q}\right)_{k}}}\frac{z^{k}}{k!},
\end{align*}
whenever this series converges.
\end{definition}

We will not concern ourselves with general case existence or convergence questions for these definitions in this paper, as this has been discussed at length elsewhere. For instance the ${}_pFq$ hypergeometric series are  known to be absolutely convergent for all $z \in \mathbb{C}$ if $p < q+1$ and for $|z|<1$ if $p=q+1$ \cite{bateman1953higher,kilbas2016generalized}. The convergence conditions of Mellin--Barnes integrals are discussed in detail in \cite{paris2001asymptotics}, see also \cite[16.17]{nist_2018} for the Meijer-G function case.

Computing general case Meijer-G functions is typically avoided in practice; instead one uses polyalgorithms which use direct reductions to hypergeometric functions or domain decomposition followed by reduction to hypergeometric functions. While many of the formulae we present in this paper are of Meijer-G form, we always also include reduced classical or hypergeometric forms as these are much easier to compute in practice. The intimate relationship between ${}_pF_q$ hypergeometric functions and Meijer-G functions has been known for decades \cite{prudnikovVol3}, with a recent paper by Kilbas et al.~\cite{kilbas2016generalized} exploring their connection in detail. The following are two equivalent ways to express their relationship \cite[16.18.1]{nist_2018}:
\begin{align*}
{{}_{p}F_{q}}\left(\left. {a_{1},\dots,a_{p}\atop b_{1},\dots,b_{q}}\right| z\right)
&=\left({\frac{\prod\limits_{k=1}^{q}\Gamma\left(b_{k}\right)}{\prod\limits_{k=1}^{p}\Gamma\left(a_{k}\right)}}\right){G^{1,p}_{p,q+1}}\left(-z\left|{1-a_{1},\dots,1-a_{p}\atop 0,1-b_{1},\dots,1-b_{q}}\right.\right)\\
&=\left({\frac{\prod\limits_{k=1}^{q}\Gamma\left(b_{k}\right)}{\prod\limits_{k=1}^{p}\Gamma\left(a_{k}\right)}}\right){G^{p,1}_{q+1,p}}\left(-\frac{1}{z}\left| {1,b_{1},\dots,b_{q}\atop a_{1},\dots,a_{p}}\right.\right).
\end{align*}
While computing with Meijer-G functions directly is usually avoided, they are extremely useful for the derivation of broad scope theorems not only due to their extreme generality but also because they satisfy numerous useful mathematical properties.

Both hypergeometric ${}_p F_q$ functions and Meijer-G functions are invariant with respect to permutations of upper or lower parameters respectively, which follows from their definitions due to commutativity. Furthermore, the following properties hold \cite[16.19.1--16.19.2]{nist_2018}:
\begin{align}\label{eq:meijerGsym1}
    {G^{m,n}_{p,q}}\left(\frac{1}{z}\left|{a_{1},\dots,a_{p}\atop b_{1},\dots,b_{q}}\right.\right)={G^{n,m}_{q,p}}\left(z\left|{1-b_{1},\dots,1-b_{q}\atop 1-a_{1},\dots,1-a_{p}}\right.\right),\\
z^{\mu}{G^{m,n}_{p,q}}\left(z\left|{a_{1},\dots,a_{p}\atop b_{1},\dots,b_{q}}\right.\right)={G^{m,n}_{p,q}}\left(z\left|{a_{1}+\mu,\dots,a_{p}+\mu\atop b_{1}+\mu,\dots,b_{q}%
+\mu}\right.\right). \label{eq:meijerGsym2}
\end{align}
We refer to \eqref{eq:meijerGsym1} as the \emph{argument inversion} property and to \eqref{eq:meijerGsym2} as the \emph{multiplicative shift} property of Meijer-G functions. A glance at their definition also shows that the Meijer-G functions further satisfy the \emph{cancellation-reduction} properties (cf. \cite[8.2.2.8]{prudnikovVol3}):
\begin{subequations}
\begin{align}\label{eq:meijerGfullcancellation}
    {G^{m,n}_{p,q}}\left(\left.{c, \mathbf{a}\atop \mathbf{b},c}
    \right|z\right)={G^{m,n-1}_{p-1,q-1}}\left(\left.{\mathbf{a}\atop \mathbf{b}}    \right| z\right),\\
        {G^{m,n}_{p,q}}\left(\left.{\mathbf{a}, c\atop c, \mathbf{b}}
    \right|z\right)={G^{m-1,n}_{p-1,q-1}}\left(\left.{\mathbf{a}\atop \mathbf{b}}    \right| z\right).\label{eq:meijerGfullcancellation2}
\end{align}
\end{subequations}
Moreover, the Meijer-G function is symmetric with respect to the order of the parameters within each of the four sets $\{a_1, \dots, a_n \}$, $\{a_{n+1},\dots,a_p\}$, $\{b_1,\dots,b_m\}$, and \{$b_{m+1}, \dots, b_q\}$ \cite[8.2.2.1]{prudnikovVol3}, e.g.
\begin{align}
G^{2,2}_{3,3}\left(z \left | a_1, a_2, a_3 \atop b_1, b_2, b_3 \right. \right) 
= G^{2,2}_{3,3}\left(z \left | a_2, a_1, a_3 \atop b_1, b_2, b_3 \right. \right) 
=  G^{2,2}_{3,3}\left(z \left | a_1, a_2, a_3 \atop b_2, b_1, b_3 \right. \right).
\end{align}
This is known as the \emph{permutation symmetry} property.  

In addition to the above-mentioned relationship with the ${}_p F_q$ hypergeometric function, the Meijer-G function with certain special case values $(m,n,p,q)$ satisfies reduction formulae to certain hypergeometric function expressions. There are a large number of such special cases, many of which are collected in \cite{prudnikovVol3} and \cite{WolframFunctions2022}. We reproduce some such formulae which we will use in our proofs in \cref{sec:app:reduction}.

\subsection{Classical orthogonal polynomials}\label{sec:classicalintro}
In general, a complete ordered set of univariate polynomials $\{p_n\}_{n\in\mathbb{N}_0}$ is said to be orthogonal with respect to a weight function $w(x)$ on a characteristic domain $\Omega \subset \mathbb{R}$ if the polynomials satisfy
\begin{align*}
\int_\Omega w(x) p_n(x) p_m(x) \mathrm{d}x = c_n \delta_{n,m},
\end{align*}
where $c_n$ is independent of $x$ and $\delta_{n,m}$ denotes the Kronecker delta.

All orthogonal polynomials in one dimension have a three-term recurrence:
\begin{align*}
p_{k+1}(x)=(A_{k}x+B_{k})p_{k}(x)-C_{k}p_{k-1}(x),
\end{align*}
where $A_k$, $B_k$ and $C_k$ are constants independent of $x$. Explicit values of $A_k$, $B_k$ and $C_k$ for the classical orthogonal polynomials discussed in this paper can be found in \cite[18.9(i)]{nist_2018}. Orthogonal polynomials have enjoyed frequent and successful use in various numerical schemes such as integration via quadrature methods or solving ODEs, PDEs and integral equations via spectral methods \cite{olver2020fast}.

In what follows we provide definitions as well as succinct characterizations of all three types of classical orthogonal polynomials. We present each basis of orthogonal polynomials in their classical non-orthonormal normalization and to remove any potential ambiguity we also present multiple explicit formulae to compute them in their canonical form by means of series expansions or hypergeometric functions. We only discuss classical orthogonal polynomials in this paper, for numerical methods to work with non-classical orthogonal polynomials we refer to \cite{gutleb2023polynomial}.
\subsubsection{Jacobi, Legendre, Chebyshev and Gegenbauer polynomials}
The Jacobi polynomials $\{ P^{(\alpha,\beta)}_n(x) \}_{n\in \mathbb{N}}$ are a family of complete univariate bases of classical orthogonal polynomials on $(-1,1)$ with basis parameters $\alpha,\beta \in \mathbb{R}$ such that $\alpha,\beta >-1$. They are orthogonal with respect to the weight function
\begin{align*}
w^{(\alpha,\beta)}_P(x) = (1-x)_+^\alpha (1+x)_+^\beta.
\end{align*}
Throughout this paper, for $a\in\mathbb{R}$, we use the zero-extended power
\begin{align*}
(z)_+^a \coloneqq \begin{cases} z^a, & z>0,\\ 0, & z\leq 0. \end{cases}
\end{align*}
In their usual normalization \cite[18.3, 18.5.7]{nist_2018} the Jacobi polynomials have the explicit representations
\begin{align*}
P^{(\alpha,\beta)}_{n}\left(x\right)&=\sum_{\ell=0}^{n}\frac{{\left(n+\alpha+%
\beta+1\right)_{\ell}}{\left(\alpha+\ell+1\right)_{n-\ell}}}{\ell!\;(n-\ell)!}%
\left(\frac{x-1}{2}\right)^{\ell}\\ &=\frac{{\left(\alpha+1\right)_{n}}}{n!}{{}_{2%
}F_{1}}\left({-n,n+\alpha+\beta+1\atop\alpha+1} \left\lvert \frac{1-x}{2} \right. \right).
\end{align*}
In computing applications, especially in the context of spectral methods, one frequently encounters the problem of converting between two different parameter Jacobi polynomial bases \cite{olver2020fast}. Ways to improve the computational cost of such conversions were recently discussed in \cite{townsend2018fast}. The following general case conversion formula involving the ${}_3F_2$ hypergeometric function is sometimes useful \cite[05.06.17.0013.01]{WolframFunctions2022}:
\begin{align}
&P_n^{(a,b)}(x) = \sum _{k=0}^n c^{\alpha,\beta,k}_{a,b,n} P_k^{(\alpha ,\beta )}(x),
\end{align}
with coefficients given by
\begin{align}\label{eq:connectioncoeffjacobi}
\begin{split}
c^{\alpha,\beta,k}_{a,b,n}
&=\tfrac{(a+k+1)_{n-k} \Gamma (k+\alpha +\beta +1) (a+b+n+1)_k }{\Gamma (-k+n+1) \Gamma (2 k+\alpha +\beta +1)}\\
&\indent \times {}_3F_2\left(\left. \begin{array}{c} k-n, a+b+k+n+1, k+\alpha +1 \\ a+k+1,2 k+\alpha +\beta +2 \end{array} \right| 1\right).
\end{split}
\end{align}
When discussing Zernike polynomials and generalized orthogonal polynomials on arbitrary dimensional balls we will be making use of certain radially shifted Jacobi polynomials which generally take the form $P^{(\alpha,\beta)}_n(2|\mathbf{x}|^2-1)$, where $\mathbf{x} \in B_1 \subset \mathbb{R}^d$ is in the $d$ dimensional unit ball $B_1$. We discuss them in further detail in \cref{sec:fractionalexplicitformshighD}.

The Gegenbauer polynomials, also known under the name \emph{ultraspherical} polynomials \cite[18.3]{nist_2018} correspond to the $a=b=\lambda-1/2$ special case of the Jacobi polynomials with $\lambda > -1/2$ and $\lambda\neq0$, and are thus orthogonal with respect to the weight function $(1-x^2)^{\lambda-\frac{1}{2}}_+$. The excluded value $\lambda=0$ is degenerate in the Gegenbauer normalization used here. The notational discrepancy between the basis parameters of Gegenbauer and Jacobi polynomials is historical in nature. The Gegenbauer polynomials furthermore come with a different canonical normalization, such that the following relationships hold (cf.~\cite[18.7.1,
18.7.2]{nist_2018}):
\begin{align}
C^{(\lambda)}_{n}\left(x\right)&=\frac{{\left(2\lambda\right)_{n}}}{{\left(%
\lambda+\frac{1}{2}\right)_{n}}}P^{(\lambda-\frac{1}{2},\lambda-\frac{1}{2})}_%
{n}\left(x\right),\;\;
\text{and} \;\;
P^{(\alpha,\alpha)}_{n}\left(x\right)&=\frac{{\left(\alpha+1\right)_{n}}}{{%
\left(2\alpha+1\right)_{n}}}C^{(\alpha+\frac{1}{2})}_{n}\left(x\right).
\end{align}
The Chebyshev polynomials of the first and second kind are special cases of both the Jacobi and Gegenbauer polynomials with a different canonical normalization \cite[18.7.3,18.7.4]{nist_2018}
\begin{align*}
T_{n}\left(x\right)=\frac{1}{P^{(-\frac{1}{2},-\frac{1}{2})}_{n}\left(1\right)}P^{(-\frac{1}{2},-\frac{1}{2})}_{n}\left(x\right),
\;\; \text{and} \;\;
U_{n}\left(x\right)=C^{(1)}_{n}\left(x\right)=\frac{(n+1)}{P^{(\frac{1}{2},\frac{1}{2})}_{n}\left(1%
\right)} P^{(\frac{1}{2},\frac{1}{2})}_{n}\left(x\right).
\end{align*}
The Chebyshev polynomials of the first kind, conventionally denoted $\{ T_n(x) \}_{n\in \mathbb{N}}$ correspond to the $a=b=-1/2$ special case and are thus orthogonal with respect to the weight function $w^{(-\frac{1}{2},-\frac{1}{2})}_P(x)=\frac{1}{\sqrt{1-x^2}}$ on $(-1,1)$. Whereas the Chebyshev polynomials of the second kind $\{ U_n(x) \}_{n\in \mathbb{N}}$ correspond to the special case  $a=b=1/2$ and are orthogonal with respect to the weight function $w^{(\frac{1}{2},\frac{1}{2})}_P(x)=\sqrt{1-x^2}$ on $(-1,1)$.

\subsubsection{Laguerre polynomials}
The generalized Laguerre polynomials $\{ L^{\alpha}_n(x) \}_{n\in \mathbb{N}}$ are a family of complete univariate bases of classical orthogonal polynomials on the halfline $(0,\infty)$ with basis parameter $\alpha>-1$ and are orthogonal with respect to the weight function
\begin{align*}
w^\alpha_L(x) = \E^{-x} x^{\alpha}.
\end{align*}
In their usual normalization they have the explicit representations \cite[18.5.12]{nist_2018}
\begin{align}
\begin{split}
L_n^{(\alpha)} (x) = \sum_{k=0}^n \frac{(-1)^k}{k!} {n+\alpha \choose n-k} x^k
&=\sum_{k=0}^{n}\frac{(-1)^k{\left(\alpha+k+1%
\right)_{n-k}}}{(n-k)!\;k!}x^{k}\\
&= \frac{{\left(\alpha+1\right)_{%
n}}}{n!}{{}_{1}F_{1}}\left(\left.{-n\atop\alpha+1}\right\lvert x\right).
\end{split}
\end{align}
The Laguerre polynomials $\{ L_n(x) \}_{n\in \mathbb{N}}$ correspond to the $\alpha = 0$ special case of the generalized Laguerre polynomials, i.e. $L_n(x) \coloneqq L^0_n(x)$, and are thus orthogonal with respect to the weight function 
\begin{align*}
w^0_L(x) = \E^{-x}.
\end{align*}
As a result, an explicit series representation is obtained directly from the generalized Laguerre polynomials:
\begin{align}
L_n (x) = \sum_{k=0}^n \frac{(-1)^k}{k!} {n \choose n-k} x^k
=\sum_{k=0}^{n}\frac{(-1)^k{\left(k+1%
\right)_{n-k}}}{(n-k)!\;k!}x^{k}\
= {{}_{1}F_{1}}\left(\left.{-n\atop 1}\right\lvert x\right).
\end{align}
\subsubsection{Hermite polynomials}
The Hermite polynomials $\{ H_n(x) \}_{n\in \mathbb{N}}$ form a complete univariate basis of classical orthogonal polynomials on $\mathbb{R}$ and are orthogonal with respect to the weight function
\begin{align*}
w_H(x) = \E^{-x^2}.
\end{align*}
This basis is sometimes also referred to as the physicist's Hermite polynomials to distinguish them from the probabilist's Hermite polynomials usually denoted $\{ He_n(x) \}_{n\in \mathbb{N}}$ which are orthogonal with respect to $w_{He}(x) = \E^{-\frac{x^2}{2}}$ \cite{oldham2009atlas}. Throughout this paper we will use the physicist's Hermite polynomials and simply refer to them as Hermite polynomials -- equivalent expressions for the probabilist's definition can straightforwardly be obtained via scaling, since cf.~\cite[24:1:1]{oldham2009atlas}:
\begin{align*}
He_n(x) = \left(\sqrt{2}\right)^{-n} H_n\left( \frac{x}{\sqrt{2}} \right).
\end{align*}
The Hermite polynomials may be defined in many ways, a common definition being
\begin{align*}
H_n(x) \coloneqq (-1)^n \E^{x^2}\frac{\mathrm{d}^n}{\mathrm{d}x^n}\E^{-x^2}.
\end{align*}
In their usual normalization they have the explicit representations:
\begin{align*}
H_n(x) &= n! \sum_{m=0}^{\left\lfloor \tfrac{n}{2} \right\rfloor} \frac{(-1)^m}{m!(n - 2m)!} (2x)^{n - 2m}\\
&= \sqrt{\pi } 2^n \left(\frac{\, _1F_1\left(-\frac{n}{2};\frac{1}{2};x^2\right)}{\Gamma \left(\frac{1-n}{2}\right)}-\frac{2 x \, _1F_1\left(\frac{1-n}{2};\frac{3}{2};x^2\right)}{\Gamma \left(-\frac{n}{2}\right)}\right)\\
&= (2 x)^n \, _2F_0\left(-\frac{n}{2},\frac{1-n}{2};;-\frac{1}{x^2}\right),
\end{align*}
where $\left\lfloor \cdot \right\rfloor$ denotes the floor function.

\subsection{Bessel functions of the first and second kind}\label{sec:besselintro}
Bessel functions $w_\nu(z)$ arise as the solutions of Bessel's differential equation \cite[10.2.1]{nist_2018}
\begin{align}\label{eq:besseldiffeq}
z^{2}\frac{{\mathrm{d}}^{2}w_\nu(z)}{{\mathrm{d}z}^{2}}+z\frac{\mathrm{d}w_\nu(z)}{\mathrm{d%
}z}+(z^{2}-\nu^{2})w_\nu=0,
\end{align}
where $\nu \in \mathbb{C}$ is called the order of the respective Bessel function. These functions have a long history of successful applications, in particular where cylindrical and spherical coordinates are natural, ranging from finding solutions to classical equations such as the Laplace and Helmholtz equations to various physical problems such as Schr\"odinger's equation \cite{sasaki2016one,jiang2008efficient,kravchenko2017representation}, analysis of microtremors in geophysics \cite{chavez2005alternative,okada2006theory} and DNA X-ray diffraction \cite{kittel1968x}. For more applications we refer to \cite{korenev2002bessel} and \cite[10.72, 10.73]{nist_2018}.\\
The following solutions of \cref{eq:besseldiffeq} are called Bessel functions \emph{of first kind} \cite[
10.2.2]{nist_2018}:
\begin{align}
J_{\nu}\left(z\right)=(\tfrac{1}{2}z)^{\nu}\sum_{k=0}^{\infty}(-1)^{k}\frac{(%
\tfrac{1}{4}z^{2})^{k}}{k!\Gamma\left(\nu+k+1\right)}
= \frac{\left(\frac{z}{2}\right)^\nu}{\Gamma(\nu+1)} \;_0F_1 \left(\nu+1 \left\lvert -\frac{z^2}{4}\right.\right).
\end{align}
The following are called Bessel functions \emph{of second kind} \cite[10.2.3]{nist_2018}, alternatively Weber or Neumann functions, \cite{prudnikovVol3}:
 \begin{align}
Y_{\nu}\left(z\right)=\frac{J_{\nu}\left(z\right)\cos\left(\nu\pi\right)-J_{-%
\nu}\left(z\right)}{\sin\left(\nu\pi\right)}.
\end{align}
Note that for integer orders $\nu = n$, one defines $Y_{n}$ via $Y_{n} = \lim_{\nu\rightarrow n}Y_{\nu}$. References to third kind Bessel functions exist in the literature, where `third kind Bessel function' is an umbrella term for more commonly known Hankel functions of the first and second kind:
\begin{align}
H_\nu^{(1)}(x) &= J_\nu(x) + \im Y_\nu(x), \\
H_\nu^{(2)}(x) &= J_\nu(x) - \im Y_\nu(x).
\end{align}
We will discuss the fractional Laplacian and Riesz potential of Bessel functions of first and second kind in this manuscript along with some weighted variations. In some cases corollary results for third kind functions are straightforwardly derived from our results via the above relationship.

Lastly, we note that while the Bessel functions are generally not polynomials and thus naturally also not orthogonal polynomials, they are nevertheless useful for function approximation and satisfy various orthogonality conditions \cite[10.22]{nist_2018} such as
\begin{align*}
\int_{0}^{\infty}t^{-1}J_{\nu+2\ell+1}\left(t\right)J_{\nu+2m+1}\left(t\right)%
\mathrm{d}t=\frac{\delta_{\ell,m}}{2(2\ell+\nu+1)},
\end{align*}
where $\nu+\ell+m>-1$.

\subsection{Meijer-G representations of classical functions}\label{sec:meijergforms}
\cref{thm:meijergrepresentationtheorem} presents a Meijer-G function representation for all the one-dimensional weighted classical orthogonal polynomials as well as Bessel functions of the first and second kind we consider in this paper. Note that while the Jacobi polynomial case includes the Legendre, Chebyshev, and Gegenbauer polynomials as special cases (up to a constant rescaling) we nevertheless include these and other interesting special case variants for completion. To our knowledge the most complete collection of Meijer-G forms of classical functions is contained in Volume 3 of the formula collection due to Prudnikov, Byrchkov and Marichev, see \cite[8.4]{prudnikovVol3}.

\begin{remark}
In our choices for the Meijer-G representations in \cref{thm:meijergrepresentationtheorem}, we intentionally avoid certain choices commonly made in the literature: in particular the case separation of even and odd degree. We thus need a function of $n \in \mathbb{N}_0$ which distinguishes between even and odd $n$. In code, such functions usually have names such as $\mathrm{isodd}()$ or $\mathrm{iseven}()$. A convenient way to define such a function in a mathematical context is
\begin{align*}
\mathrm{isodd}(n) \coloneqq  n - 2\left\lfloor \frac{n}{2} \right\rfloor &= \begin{cases} 0 \quad  \text{if } \frac{n}{2} \in \mathbb{N}_0 \\   
1 \quad \text{if } \frac{n}{2} \not\in \mathbb{N}_0 \end{cases} \quad \text{for } n \in \mathbb{N}_0.
\end{align*}
Thus in what follows we always have $\left(n - 2\left\lfloor \frac{n}{2} \right\rfloor\right) \in \{0,1\}$.
\end{remark}

\begin{theorem}\label{thm:meijergrepresentationtheorem}
The functions in column 2 of \cref{tab:meijergreptable} are equal to the Meijer-G function representations in column 3.
\begin{proof}
We provide either a proof or a reference for each row of \cref{tab:meijergreptable}. Some formulas do not appear in any paper, book, or reputable formula collection known to the authors. These formulas are proved in the lemmas cited in \cref{sec:app:proofs}.
\begin{enumerate}[start=1,label={(\bfseries Row \arabic*):}]
\item While likely to be known to some mathematicians, we are not aware of explicit references for this general weighted Jacobi polynomial case in Meijer-G form in the literature and thus include a proof in \cref{sec:app:proofs}, see \cref{lem:equaljacobimeijergseries}.
\item Gegenbauer special case of Row 1.
\item First kind Chebyshev special case of Row 1.
\item Second kind Chebyshev special case of Row 1.
\item This simpler radially shifted variant of the weighted Jacobi polynomials is known in the literature, c.f.~\cite[8.4.36.1]{prudnikovVol3}, \cite[05.06.26.0011.01]{WolframFunctions2022} or \cite{dyda2017fractional}.
\item While we are not aware of this explicit general form being present in the literature, the formula in \cite[8.4.34.3]{prudnikovVol3} (see also \cite[05.01.26.0010.01]{WolframFunctions2022}) is a closely related special case from which one can derive the presented form using properties of the Hermite polynomials.
\item Known via \cite[8.4.37.1]{prudnikovVol3} (cf.~\cite[05.08.26.0003.01]{WolframFunctions2022}).
\item Known via \cite[8.4.19.1]{prudnikovVol3} (cf.~\cite[03.01.26.0007.01]{WolframFunctions2022}).
\item Known via \cite[8.4.19.5]{prudnikovVol3} (cf.~\cite[03.01.26.0011.01]{WolframFunctions2022}).
\item Known via \cite[8.4.19.5]{prudnikovVol3} (cf.~\cite[03.01.26.0013.01]{WolframFunctions2022}).
\item Known via \cite[8.4.19.15]{prudnikovVol3} (cf.~\cite[03.01.26.0021.01]{WolframFunctions2022}), only valid if $-\mu-\nu-1 \notin \mathbb{N}$.
\item Known via \cite[8.4.20.1]{prudnikovVol3} (cf.~\cite[03.03.26.0003.01]{WolframFunctions2022}).
\item Known via \cite[8.4.20.3]{prudnikovVol3} (cf.~\cite[03.03.26.0007.01]{WolframFunctions2022}).
\item Known via \cite[8.4.20.3]{prudnikovVol3} (cf.~\cite[03.03.26.0008.01]{WolframFunctions2022}).
\end{enumerate}
\end{proof}
\end{theorem}
\begin{remark}
A factor of $(x^2)^\sigma$, with $\sigma \in \mathbb{R}$, may be multiplied with all of the functions listed in \cref{thm:meijergrepresentationtheorem} and included in the Meijer-G form via the multiplicative shift formula in \cref{eq:meijerGsym2}.
\end{remark}

\subsection{The fractional Laplacian and the Riesz potential}\label{sec:fractionallapandriesz}
Few mathematical operators have been studied more than the classical Laplace operator, $\Delta$, and entire families of functions are commonly defined via its action.
\begin{definition}[Solid harmonic polynomials]
For given $d \in \mathbb{N}$ and $\ell\geq0$, a solid harmonic polynomial of degree $\ell$ is a homogeneous polynomial $V_\ell(\mathbf{x})$ satisfying $$\Delta V_\ell(\mathbf{x}) = 0, \quad \mathbf{x} \in \mathbb{R}^d.$$
\end{definition}
In recent years, the so-called fractional generalization of the Laplacian has received much attention both in applied and pure research. While it is possible to introduce the fractional Laplacian in at least ten equivalent ways, as seen in \cite{Kwasnicki2017}, we present three definitions of the fractional Laplacian which are commonly useful for applications and give insight into its behavior: a characterization in terms of its Fourier transform, one in terms of singular integrals, and one in terms of the inverse of the Riesz potential.

Suppose that $u, \Delta u \in L^p(\mathbb{R}^d)$ for some $p \in [1,\infty)$. Then it is a classical result that the Laplace operator $\Delta$ has the Fourier multiplier representation
\begin{align}
\mathcal{F}[\Delta u](\xi) = -|\xi|^2 \mathcal{F}[u](\xi),
\end{align}
where $\mathcal{F}[u](\xi)$ denotes the Fourier transform of $u(x)$ as a function of $\xi$, i.e. 
\begin{align}
\mathcal{F}[u](\xi) \coloneqq \int_{\mathbb{R}^d} u(x)\E^{-\im x\cdot\xi} \mathrm{d}x.
\end{align}
The spectral theorem \cite[Thm.~1.1]{Kwasnicki2017} then suggests the following Fourier multiplier definition of the fractional Laplacian:
\begin{definition}[Fractional Laplacian (Fourier)]\label{def:fourierfractionallaplace} Let $s>0$ and $f\in\mathcal{S}(\mathbb{R}^d)$. We define the fractional Laplacian by
\begin{align}
(-\Delta)^s f\coloneqq\mathcal{F}^{-1}\!\left[|\xi|^{2s}\mathcal{F}[f](\xi)\right].
\end{align}
\end{definition}
Note that some authors prefer to write $(-\Delta)^{\frac{\alpha}{2}}$ instead of $(-\Delta)^{s}$ since the former (with $0<\alpha<2$) agrees with the order definition of pseudo-differential operators. Furthermore, others prefer to refer to $(-\Delta)^{s}$ as the negative fractional Laplacian and reserve the term fractional Laplacian for $-(-\Delta)^{s}$, cf.~\cite{Kwasnicki2017}. We will write $(-\Delta)^s$ and call this object the fractional Laplacian throughout this paper. On any common domain for which the multipliers and compositions are defined, the Fourier definition readily implies the semigroup property
\begin{align*}
(-\Delta)^{a}(-\Delta)^{b} = (-\Delta)^{a+b}.
\end{align*}
One also finds that the fractional Laplacian defined in this way is a non-local operator. We now briefly discuss the Riesz potential before returning to the above definition of the fractional Laplacian from a different perspective.
\begin{definition}[Riesz potential]
Let $s \in \left(0,\frac{d}{2}\right)$, and let $f \in L^p(\mathbb{R}^d)$ with $1 \leq p < \frac{d}{2s}$. The Riesz potential, defined for almost every $x\in\mathbb{R}^d$, is
\begin{align*}
(-\Delta)^{-s} f(x) \coloneqq \frac{\Gamma\left(\frac{d}{2}-s\right)}{4^{s} \pi^{\frac{d}{2}}\Gamma(s)} \int_{\mathbb{R}^d} \frac{f(y)}{|x-y|^{d-2s}} \mathrm{d}y.
\end{align*}
\end{definition}
The Riesz potential has the Fourier multiplier representation 
\begin{align*}
\mathcal{F}[(-\Delta)^{-s} f](\xi) = |\xi|^{-2s} \mathcal{F}[f](\xi).
\end{align*}
Within the appropriate parameter ranges and function spaces where both operators are well-defined, the Riesz potential is in fact the inverse of the fractional Laplacian, cf.~\cite{Kwasnicki2017}. This serves as our second definition:
\begin{definition}[Fractional Laplacian (Riesz)]\label{def:rieszlaplacian}
Let $s \in \left(0,\frac{d}{2}\right)$. If $f$ and $g$ belong to a function space on which the Riesz potential is defined and
\begin{align}
f=(-\Delta)^{-s}g,
\end{align}
then we define $(-\Delta)^s f\coloneqq g$. Thus the fractional Laplacian is the inverse of the Riesz potential on its operator domain.
\end{definition}
It is commonplace to build this connection to the fractional Laplacian into the notation for the Riesz potential. In the potential theory literature one would more commonly find the notation $I_{2s}$ for the Riesz potential defined as above. We refer to \cite{Kwasnicki2017} for a much more detailed discussion of the relationship between the Riesz potential and the fractional Laplacian.

The singular-integral definition of the fractional Laplacian is motivated by the observation that the fractional Laplacian is a non-local operator and the inverse of the Riesz potential. We first give its familiar form for $0<s<1$ and then, following \cite[Section~1.1]{dyda2017fractional}, its higher-order extension.
\begin{definition}[Fractional Laplacian (singular integral)] 
Let $s \in (0,1)$ and $f \in \mathcal{S}(\mathbb{R}^d)$ where $\mathcal{S}(\mathbb{R}^d)$ denotes the space of Schwartz functions. We define the fractional Laplacian $(-\Delta)^s : \mathcal{S}(\mathbb{R}^d) \to L^2(\mathbb{R}^d)$ via the singular integral
\begin{align}
(-\Delta)^sf(x) \coloneqq \frac{4^s\Gamma(\frac{d}{2}+s)}{\pi^{\frac{d}{2}}|\Gamma(-s)|} \dashint_{\mathbb{R}^d}\frac{f(x)-f(y)}{|x-y|^{d+2s}} \mathrm{d}y.
\end{align}
Here $\dashint_{\mathbb{R}^d} \cdot$ denotes the Cauchy principal value integral \cite[Ch.~2.4]{King2009a}.
\end{definition}
\begin{definition}[Fractional Laplacian (higher-order singular integral)]\label{def:higherorderfractionallaplace}
Let $s>0$, let $f\in\mathcal{S}(\mathbb{R}^d)$, and choose an even integer $k>2s$. Define the centered difference
\begin{align*}
\Delta_y^k f(x)\coloneqq\sum_{j=0}^k(-1)^j\binom{k}{j}f\left(x+\left(\frac{k}{2}-j\right)y\right)
\end{align*}
and
\begin{align*}
\gamma_d(\alpha)&\coloneqq\frac{2^\alpha\pi^{d/2}\Gamma(\alpha/2)}{\Gamma((d-\alpha)/2)},\\
\chi_{d,k}(2s)&\coloneqq-\gamma_d(-2s)\sum_{j=0}^k(-1)^j\binom{k}{j}\left|\frac{k}{2}-j\right|^{2s}.
\end{align*}
Then
\begin{align}
(-\Delta)^s f(x)\coloneqq\frac{1}{\chi_{d,k}(2s)}\lim_{\varepsilon\to0^+}
\int_{\mathbb{R}^d\setminus B(0,\varepsilon)}
\frac{-\Delta_y^k f(x)}{|y|^{d+2s}}\,\mathrm{d}y.
\end{align}
At positive integer values of $s$, the poles of $\gamma_d(-2s)$ are cancelled by the corresponding zeros of the finite sum in $\chi_{d,k}(2s)$, so the normalization is understood by analytic continuation. This definition is independent of the admissible choice of $k$ and has Fourier multiplier $|\xi|^{2s}$ \cite[Sec.~1.1]{dyda2017fractional}.
\end{definition}
\begin{remark}
For $0<s<1$, one may take $k=2$, and \cref{def:higherorderfractionallaplace} reduces to the principal-value definition above. More generally, the cancellation in the centered difference gives $\Delta_y^k f(x)=O(|y|^k)$ as $y\to0$, so local integrability follows from $k>2s$. This is the definition underlying the $s>0$ range in \cref{thm:meijergDKKtheorem1}.
\end{remark}
As mentioned above, the Fourier, Riesz and singular-integral definitions for the fractional Laplacian are equivalent in the appropriate parameter ranges provided the domain is $\mathbb{R}^d$ (the case of bounded domains is much more subtle \cite{garbaczewski2019fractional}). We refer to \cite{Kwasnicki2017,dyda2017fractional} for detailed arguments and higher-order extensions.

\subsection{A brief history of explicit form fractional Laplacians}\label{sec:briefhistory}
While we do not make any claims to completeness, our aim in this section is to give an account of known explicit form results for fractional Laplacians and Riesz potentials, culminating in recent results which motivate the present paper.

M.~Riesz obtained expressions for the harmonic measure and the Green function for balls and complements of the ball in \cite{riesz1938integrales,riesz1938rectification} using Kelvin transforms, cf.~\cite[Appendix]{landkof1972foundations} and \cite{blumenthal1961distribution}. In \cite{hmissi1994fonctions}, cf.~\cite{bogdan1999representation}, it was shown that if $\Delta h = 0$, in the unit ball, then $(-\Delta)^{s} (1-|\mathbf{x}|^2)_+^{s-1} h(\mathbf{x}) = 0$ in the unit ball. Explicit fractional Laplacians and Riesz potentials of profile functions such as $(1-x^2)^s_+$ and $(x^2-1)^s_+$ were discussed in \cite{biler2011barenblatt,biler2015nonlocal,huang2014explicit} in the context of non-local porous medium equations and used in \cite{Jcarrilloradial,gutleb2020computing} to derive and compute explicit steady state solutions to certain power law equilibrium measure problems featuring Riesz potentials. A number of explicit form fractional Laplacian results of functions such as $f_1(\mathbf{x}) = |\mathbf{x}|^{-a}$ with $a \in (0,d)$ and $f_2(\mathbf{x}) = \E^{-|\mathbf{x}|^2}$ were given in \cite{rubin1996fractional}. 

In 2017, Dyda et al.~published a paper describing an efficient numerical scheme for obtaining two-sided bounds for the eigenvalues of the fractional Laplacian in the arbitrary dimensional unit ball \cite{dyda2017eigenvalues} and used it to partially resolve a conjecture of Kulczycki asserting that the second smallest eigenvalue corresponds to an antisymmetric eigenfunction \cite{banuelos2004cauchy, fall2021morse}. They simultaneously published a companion paper \cite{dyda2017fractional} which laid the foundation of their numerical method by describing a general procedure to obtain fractional Laplacians and Riesz potentials of Meijer-G functions, which we reproduce in Theorems \labelcref{thm:meijergDKKtheorem1} and \labelcref{thm:meijergDKKtheorem2} below. In their paper, Dyda et al.~substantially extended the tools for deriving explicit expressions for fractional Laplacians and Riesz potentials of classical functions and applied their results to the example of weighted radially symmetric Jacobi polynomials on ball domains, see also \cite{gutleb2022computation}. Sections \labelcref{sec:fractionalexplicitforms1D} and \labelcref{sec:fractionalexplicitformshighD} of the present paper leverage results from  \cite{dyda2017fractional} to provide explicit form fractional Laplacians and Riesz potentials for most weighted classical orthogonal polynomials as well as Bessel functions. 

The following two theorems reproduce the essential Meijer-G function results of \cite{dyda2017fractional}. While the theorem statements and conditions may appear involved at first glance, their core statement is simple: Given mild assumptions, then the fractional Laplacian or Riesz potential maps a certain family of Meijer-G functions to another family of Meijer-G functions.  
\begin{theorem}[DKK--Meijer-G Theorem]\label{thm:meijergDKKtheorem1}
Let $f(\mathbf{x})$ be a function defined by the product
\begin{align*}
f(\mathbf{x}) \coloneqq V_\ell(\mathbf{x}) G^{m,n}_{p,q} \left(|\mathbf{x}|^2\left|
\begin{array}{c}
 \mathbf{a} \\
 \mathbf{b} \\
\end{array}\right.
\right),
\end{align*}
where $V_\ell(\mathbf{x})$ is a solid harmonic polynomial of degree $\ell \geq 0$ and $\mathbf{x} \in \mathbb{R}^d$. Define 
\begin{align*}
\nu_G\coloneqq\sum_{j=1}^p\operatorname{Re}(a_j)-\sum_{j=1}^q\operatorname{Re}(b_j),
\end{align*}
and adopt the conventions $\max_{1\leq j\leq n}\operatorname{Re}(a_j)=-\infty$ if $n=0$ and $\min_{1\leq j\leq m}\operatorname{Re}(b_j)=+\infty$ if $m=0$. Assume the pole-separation condition
\begin{align}
1-\max_{1\leq j\leq n}\operatorname{Re}(a_j)
> -\min_{1\leq j\leq m}\operatorname{Re}(b_j),
\label{eq:conditionS}
\end{align}
referred to as Condition S in \cite{dyda2017fractional}. Further, define
\begin{equation*}
\bar{\lambda} =
\begin{cases}
1-\max\limits_{1 \leq j \leq n}\operatorname{Re}(a_j),
& \substack{\text{if }p+q<2m+2n,\ \text{or}\\
             p+q=2m+2n\ \text{and }p\geq q,} \\[1ex]
\displaystyle
\min\left(1-\max\limits_{1 \leq j \leq n}\operatorname{Re}(a_j),
\frac{1}{2}+\frac{\nu_G-1}{q-p}\right),
& \substack{\text{if }p+q=2m+2n\\
             \text{and }p<q.}
\end{cases}
\end{equation*}
\begin{equation*}
\underset{\bar{}}{\lambda} =
\begin{cases}
-\min\limits_{1 \leq j \leq m}\operatorname{Re}(b_j),
& \substack{\text{if }p+q<2m+2n,\ \text{or}\\
             p+q=2m+2n\ \text{and }p\leq q,} \\[1ex]
\displaystyle
\max\left(-\min\limits_{1 \leq j \leq m}\operatorname{Re}(b_j),
\frac{1}{2}-\frac{\nu_G-1}{p-q}\right),
& \substack{\text{if }p+q=2m+2n\\
             \text{and }p>q.}
\end{cases}
\end{equation*}

\bigskip
\noindent (i) Suppose that the following inequalities hold:
\begin{subequations}
\begin{align}
s&>0,\\
p+q &< 2m+2n,\\
1-\max_{1 \leq j \leq n} \mathrm{Re}(a_j)&>\frac{\ell-2s}{2},\\
\min_{1 \leq j \leq m} \mathrm{Re}(b_j)&>-\frac{d+\ell}{2}.
\end{align}
\end{subequations}
Then
\begin{align}
(-\Delta)^s f(\mathbf{x}) = 4^s V_\ell(\mathbf{x}) G^{m+1,n+1}_{p+2,q+2} \left(|\mathbf{x}|^2\left|
\begin{array}{c}
 1-s-\frac{d+2\ell}{2}, \mathbf{a}-s, -s \\
 0, \mathbf{b}-s, 1-\frac{d+2\ell}{2} \\
\end{array}\right.
\right),
\end{align}
for all $\mathbf{x} \in \mathbb{R}^d$ such that $\mathbf{x} \neq \mathbf{0}$.

\bigskip
\noindent (ii) The same statement as in (i) holds if instead of the condition $p+q < 2m+2n$ we have $p+q = 2m+2n$ along with $\bar{\lambda}>\frac{-2s+\ell}{2}$ and $\underset{\bar{}}{\lambda}<\frac{d+\ell}{2}$. When $p=q$, we additionally require $\nu_G>0$ if $|\mathbf{x}|\neq 1$, and $\nu_G>1+2s$ if $|\mathbf{x}|=1$.
The results also hold at $\mathbf{x}=0$ if both $f(\mathbf{x})$ and $(-\Delta)^s f(\mathbf{x})$ are continuous at $0$.
\end{theorem}
\begin{proof}
See the proof and discussion of Theorem 2 in \cite{dyda2017fractional}.
\end{proof}
We also require the following analogous result for Riesz potentials:
\begin{theorem}\label{thm:meijergDKKtheorem2}
Let $f(\mathbf{x}), V_\ell(\mathbf{x}), \nu_G, \bar{\lambda}$ and $\underset{\bar{}}{\lambda}$ be defined as in \cref{thm:meijergDKKtheorem1}, and assume Condition S in \eqref{eq:conditionS}.

\noindent (i) Suppose that the following inequalities hold:
\begin{subequations}
\begin{align}
0 < &s < d/2,\\
p+q &< 2m+2n,\\
1-\max_{1 \leq j \leq n} \mathrm{Re}(a_j)&>\frac{\ell+2s}{2},\\
\min_{1 \leq j \leq m} \mathrm{Re}(b_j)&>-\frac{d+\ell}{2}.
\end{align}
\end{subequations}
Then
\begin{align}
(-\Delta)^{-s} f(\mathbf{x}) = 4^{-s} V_\ell(\mathbf{x}) G^{m+1,n+1}_{p+2,q+2} \left(|\mathbf{x}|^2\left|
\begin{array}{c}
 1+s-\frac{d+2\ell}{2}, \mathbf{a}+s, s \\
 0, \mathbf{b}+s, 1-\frac{d+2\ell}{2} \\
\end{array}\right.
\right),
\end{align}
for all $x \in \mathbb{R}^d$ such that $\mathbf{x} \neq \mathbf{0}$.

\bigskip
\noindent (ii) The same statement as in (i) holds if instead of the condition $p+q < 2m+2n$ we have $p+q = 2m+2n$ along with $\bar{\lambda}>\frac{2s+\ell}{2}$ and $\underset{\bar{}}{\lambda}<\frac{d+\ell}{2}$. In the case where $p=q$ we further require $\nu_G>0$ and either $|\mathbf{x}|\neq 1$ or $\nu_G>1-2s$.
\end{theorem}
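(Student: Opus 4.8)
The plan is to deduce the statement from \cref{thm:meijergDKKtheorem1} by using that, on $\mathbb{R}^d$, the Riesz potential inverts the fractional Laplacian. A useful preliminary observation is that, as an identity between Meijer-G functions, the identity we wish to prove is exactly the conclusion of \cref{thm:meijergDKKtheorem1} under the formal substitution $s\mapsto -s$; the same is true of the hypotheses, apart from the genuinely new constraint $s<d/2$, which is only needed to make the Riesz integral converge. So there is essentially one computation to carry out, and I would organise it as follows. Write $g(\mathbf{x})$ for the claimed right-hand side; then $g$ is again of the form $V_\ell(\mathbf{x})\,G^{m',n'}_{p',q'}(|\mathbf{x}|^2\,|\,\mathbf{a}';\mathbf{b}')$ with $(m',n',p',q')=(m+1,n+1,p+2,q+2)$. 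The first task is to check that $g$ meets the hypotheses of \cref{thm:meijergDKKtheorem1}(i): the inequality $p+q<2m+2n$ is invariant under $(m,n,p,q)\mapsto(m+1,n+1,p+2,q+2)$; the maximum in the third inequality for $g$ runs only over its first $n'$ top parameters $1+s-\tfrac{d+2\ell}{2},a_1+s,\dots,a_n+s$ (the extra parameter $s$ being of denominator type), so that inequality reduces to $1-\max(1+s-\tfrac{d+2\ell}{2},\,\max_{1\le j\le n}\mathrm{Re}(a_j)+s)>\tfrac{\ell-2s}{2}$ and the fourth to $\min(0,\,\min_{1\le j\le m}\mathrm{Re}(b_j)+s)>-\tfrac{d+\ell}{2}$; both follow from $d+\ell>0$ together with the strictly stronger hypotheses $1-\max_j\mathrm{Re}(a_j)>\tfrac{\ell+2s}{2}$ and $\min_j\mathrm{Re}(b_j)>-\tfrac{d+\ell}{2}$ of the present theorem.

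Applying \cref{thm:meijergDKKtheorem1}(i) to $g$ then gives $(-\Delta)^s g=V_\ell(\mathbf{x})\,G^{m+2,n+2}_{p+4,q+4}(|\mathbf{x}|^2)$, the prefactors $4^{-s}$ and $4^{s}$ having cancelled. Carrying out the shift of $\mathbf{a}'$ and $\mathbf{b}'$ by $-s$ that the theorem prescribes, one sees that the top and bottom parameter lists of this $G^{m+2,n+2}_{p+4,q+4}$ each contain the four entries $1-s-\tfrac{d+2\ell}{2},\,1-\tfrac{d+2\ell}{2},\,0,\,-s$; cancelling them via the reduction property \eqref{eq:meijerGfullcancellation} (and its mirror image obtained from the argument-inversion symmetry \eqref{eq:meijerGsym1}) brings the indices from $(m+2,n+2,p+4,q+4)$ back to $(m,n,p,q)$ and collapses the function to $G^{m,n}_{p,q}(|\mathbf{x}|^2\,|\,\mathbf{a};\mathbf{b})$, so that $(-\Delta)^s g=V_\ell(\mathbf{x})\,G^{m,n}_{p,q}(|\mathbf{x}|^2\,|\,\mathbf{a};\mathbf{b})=f$. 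Since the stated inequalities place $f$, and hence $g$, in the function spaces for which \cref{def:rieszlaplacian} is valid, applying $(-\Delta)^{-s}$ to both sides and using the semigroup identity $(-\Delta)^{-s}(-\Delta)^{s}=\mathrm{id}$ yields $g=(-\Delta)^{-s}f$, which is the claim. Part (ii) follows from the same computation with \cref{thm:meijergDKKtheorem1}(ii) used in place of (i), once one checks (routinely) that the $\bar\lambda$ and $\underline{\lambda}$ conditions for $g$ are implied by those assumed on $f$.

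For completeness I would also record the direct argument, which is the one given in \cite{dyda2017fractional} and which is what really explains the shape of the hypotheses. Since the Riesz potential commutes with rotations, $(-\Delta)^{-s}[V_\ell(\mathbf{x})\,h(|\mathbf{x}|^2)]=V_\ell(\mathbf{x})\,(\mathcal{R}h)(|\mathbf{x}|^2)$ for a one-dimensional transform $\mathcal{R}$ whose kernel is read off from the classical evaluation of $(-\Delta)^{-s}[V_\ell(\mathbf{x})|\mathbf{x}|^{2\gamma}]$ as a ratio of Gamma functions in $\gamma$; this is the origin of the constant $4^{-s}$ and of the uniform $+s$ shift of the Meijer-G parameters. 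Inserting the Mellin--Barnes representation of $g$ and interchanging it with the Riesz integral (Fubini), $\mathcal{R}$ acts on the Mellin variable $u$ by multiplication by $4^{-s}\Gamma(-s-u)\Gamma(u+\tfrac{d+2\ell}{2})/[\Gamma(-u)\Gamma(u+s+\tfrac{d+2\ell}{2})]$ together with the monomial replacement $(|\mathbf{x}|^2)^u\mapsto(|\mathbf{x}|^2)^{u+s}$; comparing the resulting integrand with the definition of the Meijer-G function and absorbing the factor $(|\mathbf{x}|^2)^s$ by the multiplicative shift \eqref{eq:meijerGsym2} reproduces $4^{-s}V_\ell(\mathbf{x})\,G^{m+1,n+1}_{p+2,q+2}$ with precisely the stated parameter lists --- the two new numerator Gamma factors contributing the top parameter $1+s-\tfrac{d+2\ell}{2}$ and the bottom parameter $0$, and the two new denominator factors the top parameter $s$ and the bottom parameter $1-\tfrac{d+2\ell}{2}$. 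The inequalities of the theorem are exactly what make this rigorous: $0<s<d/2$ and the bounds on $\min_j\mathrm{Re}(b_j)$ and $\max_j\mathrm{Re}(a_j)$ ensure that $f$ is mild enough at the origin and decays fast enough at infinity for the Riesz integral to converge and for Fubini to apply, while $p+q<2m+2n$ keeps the shifted Mellin--Barnes contour admissible.

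The hard part will be the analytic bookkeeping in the borderline regime $p+q=2m+2n$, where the Mellin--Barnes integral is only conditionally convergent: there one carries out the computation on the range of parameters for which it converges absolutely --- which is exactly what the auxiliary quantities $\bar\lambda$ and $\underline{\lambda}$ are built to track --- and then extends the result by analytic continuation in the parameters, the extra proviso in the $p=q$ subcase (either $|\mathbf{x}|\neq 1$, or $\sum_j\mathrm{Re}(a_j)-\sum_j\mathrm{Re}(b_j)>1-2s$) being precisely the condition under which the limiting boundary contour integral still converges at $|\mathbf{x}|=1$. For this delicate part I would import the estimates verbatim from the proof of Theorem 2 in \cite{dyda2017fractional}, from which the present statement follows; everything else is the routine parameter accounting described above.
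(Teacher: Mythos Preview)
The paper does not prove this itself: its proof is a one-line citation to Theorem~1 of \cite{dyda2017fractional}. The direct Mellin--Barnes argument you outline in your second paragraph is exactly that proof, so that part of your proposal matches the paper's route.

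Your first approach --- computing $(-\Delta)^s g$ via \cref{thm:meijergDKKtheorem1}, collapsing the four matched parameter pairs back to $G^{m,n}_{p,q}$, and then inverting --- is a genuinely different reduction. The parameter bookkeeping is correct: the hypotheses of \cref{thm:meijergDKKtheorem1}(i) for $g$ really do follow from those assumed here, and the cancellation does return $f$. What this buys is that it explains why the two theorems are formally identical up to $s\mapsto -s$, without redoing the Mellin analysis.

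There is, however, a real gap at the inversion step. You invoke ``$(-\Delta)^{-s}(-\Delta)^{s}=\mathrm{id}$'', but \cref{def:rieszlaplacian} only gives the other direction $(-\Delta)^{s}(-\Delta)^{-s}f=f$, and your claim that the stated inequalities place $g$ in a space where the Riesz potential is a left inverse is asserted, not verified. The hypotheses of the present theorem are tailored to make the Riesz integral of $f$ converge; they say nothing directly about the integrability or decay of $g$, and checking that $g$ lies in a class where $(-\Delta)^{-s}(-\Delta)^{s}g=g$ amounts to essentially the same Mellin-transform estimates as the direct proof. A second, more concrete, issue arises in part~(ii) with $p=q$ at $|\mathbf{x}|=1$: applying \cref{thm:meijergDKKtheorem1}(ii) to $g$ there needs $\sum_j\mathrm{Re}(a'_j)-\sum_j\mathrm{Re}(b'_j)>1+2s$, which unwinds to $\sum_j\mathrm{Re}(a_j)-\sum_j\mathrm{Re}(b_j)>1$; the hypothesis here only gives $>1-2s$, so your reduction does not cover that boundary case. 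For both reasons the direct argument in \cite{dyda2017fractional} is the one that actually delivers the stated conditions, and your first approach should be presented as an informative consistency check rather than a complete proof.
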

\begin{proof}
See the proof and discussion of Theorem 1 in \cite{dyda2017fractional}.
\end{proof}

These two results are ultimately a consequence of the fractional Laplacian and Riesz potential's favorable behavior with respect to Mellin integrals as described in \cite{dyda2017fractional} -- in fact the interaction of Mellin integrals and fractional Laplacians is yet another proposed equivalent way to define the fractional Laplacian as recently discussed in \cite{pagnini2023mellin}.

Independent of the Meijer-G approach championed by Dyda et al., explicit forms and interrelations for the fractional Laplacian of weighted Chebyshev polynomials of first and second kind were derived by Papadopoulos and Olver in \cite{papadopoulos2022sparse} for the purpose of a sparse spectral method sum space approach to solving equations of the form 
\begin{align}
\left(\lambda \mathcal{I} + \mu \mathcal{H} + \eta \frac{\mathrm{d}}{\mathrm{d}x}+(-\Delta)^{\frac{1}{2}}\right)u = f,
\end{align}
where $\mathcal{I}$ denotes the identity operator, $\mathcal{H}$ denotes the Hilbert transform (see \cite[1.14(v)]{nist_2018}) and $\lambda, \mu, \eta \in \mathbb{R}$. Finally, Li et al.~recently derived explicit form solutions of the fractional Laplacian for certain weighted Hermite polynomials in \cite{Li2021} and fractional Laplacians on ellipsoids were discussed by Abatangelo et al.~in \cite{abatangelo_fractional_2021}.

\begin{remark}
Although not utilized in this paper, we note that the following older, but also quite general, relationship due to Bateman \cite{bateman1909solution} may be used to derive many explicit forms and recurrence relationships for Riemann--Liouville fractional integrals such as $\frac{1}{\Gamma
(1+\alpha)} \int_{-1}^{x} (x-y)^{\alpha} u(y) \mathrm{d}y$
for some of the functions discussed in this paper, see e.g.~\cite{chen2016generalized,gutleb2020computing} for Jacobi polynomials:
\begin{align*}
{}_2F_1\left(\left. \begin{array}{c} a, b\\ c+\rho \end{array} \right| x \right) = \frac{\Gamma(c+\rho)}{\Gamma(c)\Gamma(\rho)}x^{1-c-\rho}\int_0^x t^{c-1} (x-t)^{\rho-1} {}_2F_1\left(\left. \begin{array}{c} a,b \\ c \end{array} \right| t \right) \mathrm{d}t, \quad |x|<1.
\end{align*}
Riemann-Liouville fractional integrals are closely related to one-dimensional Riesz potentials.
\end{remark}

\newpage
    \clearpage
    \thispagestyle{empty}
    {\newgeometry{left=1cm,right=1cm,bottom=1cm,top=1cm}
    \begin{landscape}
        \centering 
\begin{tabular}{c c c}
 \textbf{Row ID}& \textbf{Classical notation}         &      \textbf{Meijer-G form}    \\ \hline \hline
 \textbf{1} & $ (1-x^2)_+^a P_n^{(a,a)}(x) $          & $\frac{\Gamma (a+n+1)}{n!} x^{n-2 \left\lfloor \frac{n}{2}\right\rfloor } G_{2,2}^{2,0}\left(x^2 \left|
\begin{array}{c}
 a+\left\lfloor \frac{n}{2}\right\rfloor +1,-n+\left\lfloor \frac{n}{2}\right\rfloor +\frac{1}{2} \\
 0,-n+2 \left\lfloor \frac{n}{2}\right\rfloor +\frac{1}{2} \\
\end{array}\right.
\right)$ \\   
\textbf{2} & $ (1-x^2)_+^{\lambda-\frac{1}{2}} C_n^{(\lambda)}(x) $           & $ \frac{\Gamma (\lambda+n+\frac{1}{2}) \left(2\lambda\right)_{n}}{n! \left(\lambda+\frac{1}{2}\right)_{n}} x^{n-2 \left\lfloor \frac{n}{2}\right\rfloor } G_{2,2}^{2,0}\left(x^2 \left|
\begin{array}{c}
 \lambda+\left\lfloor \frac{n}{2}\right\rfloor +\frac{1}{2},-n+\left\lfloor \frac{n}{2}\right\rfloor +\frac{1}{2} \\
 0,-n+2 \left\lfloor \frac{n}{2}\right\rfloor +\frac{1}{2} \\
\end{array}\right.
\right)$         \\  
\textbf{3} & $ (1-x^2)^{-1/2}_+ T_n(x) $          &  $\frac{\Gamma (n+\frac{1}{2})}{n! P^{(-\nicefrac{1}{2},-\nicefrac{1}{2})}_{n}\left(1\right)} x^{n-2 \left\lfloor \frac{n}{2}\right\rfloor } G_{2,2}^{2,0}\left(x^2 \left|
\begin{array}{c}
 \left\lfloor \frac{n}{2}\right\rfloor +\frac{1}{2},-n+\left\lfloor \frac{n}{2}\right\rfloor +\frac{1}{2} \\
 0,-n+2 \left\lfloor \frac{n}{2}\right\rfloor +\frac{1}{2} \\
\end{array}\right.
\right)$       \\  
\textbf{4} & $ (1-x^2)^{1/2}_+ U_n(x) $          &  $\frac{(n+1)\Gamma (n+\frac{3}{2})}{n! P^{(\nicefrac{1}{2},\nicefrac{1}{2})}_{n}(1)} x^{n-2 \left\lfloor \frac{n}{2}\right\rfloor } G_{2,2}^{2,0}\left(x^2 \left|
\begin{array}{c}
 \left\lfloor \frac{n}{2}\right\rfloor +\frac{3}{2},-n+\left\lfloor \frac{n}{2}\right\rfloor +\frac{1}{2} \\
 0,-n+2 \left\lfloor \frac{n}{2}\right\rfloor +\frac{1}{2} \\
\end{array}\right.
\right)$       \\  
\textbf{5} &  $(1-x^2)_+^a(x^2)^b P_n^{(a,b)}(2x^2-1)$           & $\frac{\Gamma (a+n+1) }{n!} G_{2,2}^{2,0}\left(x^2 \left\lvert
\begin{array}{cc}
 a+b+n+1, & -n \\
 0, & b \\
\end{array}\right.
\right)$        \\  
\textbf{6} & $ \E^{-x^2} H_n(x) $           & $2^n x^{n-2 \left\lfloor \frac{n}{2}\right\rfloor } G_{1,2}^{2,0}\left(x^2 \left|
\begin{array}{c}
 -n+\left\lfloor \frac{n}{2}\right\rfloor +\frac{1}{2} \\
 0,-n+2 \left\lfloor \frac{n}{2}\right\rfloor +\frac{1}{2} \\
\end{array}
\right.\right)$        \\  
\textbf{7} & $ \E^{-x^2} L^{\alpha}_n(x^2) $           & $\frac{1}{n!} G_{1,2}^{1,1}\left(x^2\left|
\begin{array}{c}
 -n-\alpha  \\
 0,  -\alpha  \\
\end{array}
\right.\right)$        \\   
\textbf{8} &   $J_{\nu }(2 |x|)$        &  $G_{0,2}^{1,0}\left(x^2 \left|
\begin{array}{c}
 \frac{\nu }{2},-\frac{\nu }{2} \\
\end{array}\right.
\right)$      \\ 
\textbf{9} &   $\cos(a+|x|) J_{\nu }(|x|)$        &  $\frac{1}{\sqrt{2}} G_{3,5}^{2,2}\left(x^2\left|
\begin{array}{c}
 \frac{1}{4},\frac{3}{4},\frac{a}{\pi }+\frac{\nu +1}{2} \\
 \frac{\nu }{2},\frac{\nu +1}{2},-\frac{\nu }{2},\frac{1-\nu }{2},\frac{a}{\pi }+\frac{\nu +1}{2} \\
\end{array}\right.
\right)$      \\ 
\textbf{10} &   $\sin(a+|x|) J_{\nu }(|x|)$        &  $\frac{1}{\sqrt{2}}G_{3,5}^{2,2}\left(x^2\left|
\begin{array}{c}
 \frac{1}{4},\frac{3}{4},\frac{a}{\pi }+\frac{\nu }{2} \\
 \frac{\nu }{2},\frac{\nu +1}{2},-\frac{\nu }{2},\frac{1-\nu }{2},\frac{a}{\pi }+\frac{\nu }{2} \\
\end{array}\right.
\right)$      \\ 
\textbf{11} &   $J_{\mu }(|x|) J_{\nu }(|x|)$        &  $\frac{1}{\sqrt{\pi }}G_{2,4}^{1,2}\left(x^2 \left|
\begin{array}{c}
 0,\frac{1}{2} \\
 \frac{\mu +\nu }{2},-\frac{\mu +\nu}{2},\frac{\mu -\nu }{2},\frac{\nu -\mu }{2} \\
\end{array}\right.
\right)$      \\ 
\textbf{12} &   $Y_{\nu }(2 |x|)$        &$G_{1,3}^{2,0}\left(x^2\left|
\begin{array}{c}
 -\frac{\nu +1}{2} \\
 \frac{\nu }{2},-\frac{\nu }{2},-\frac{\nu +1}{2}  \\
\end{array}\right.\right)$\\
\textbf{13} &   $\cos(|x|) Y_{\nu }(|x|)$        &$\frac{1}{\sqrt{2}}G_{3,5}^{2,2}\left(x^2\left|
\begin{array}{c}
 \frac{1}{4},\frac{3}{4},-\frac{\nu +1}{2} \\
 -\frac{\nu }{2},\frac{\nu }{2},-\frac{\nu +1}{2} ,\frac{1-\nu }{2},\frac{\nu +1}{2} \\
\end{array}\right.
\right)$\\
\textbf{14} &   $\sin(|x|) Y_{\nu }(|x|)$        &$\frac{1}{\sqrt{2}}G_{3,5}^{2,2}\left(x^2 \left|
\begin{array}{c}
 \frac{1}{4},\frac{3}{4},-\frac{\nu }{2} \\
 \frac{\nu +1}{2},\frac{1-\nu }{2},-\frac{\nu }{2},-\frac{\nu }{2},\frac{\nu }{2} \\
\end{array} \right.
\right)$
\end{tabular}
\captionof{table}{All formulae in this table are valid for all real values of $x$ where the functions in column 2 are well-defined. Row 11 only holds if $-\mu-\nu-1 \notin \mathbb{N}$.}
\label{tab:meijergreptable}
\end{landscape}}
    \clearpage
    \restoregeometry
    
\section{Fractional Laplacian and Riesz potential formulae in 1D}\label{sec:fractionalexplicitforms1D}
\sectionmark{Formulae in 1D}
We now present explicit forms for the fractional Laplacian and Riesz potential of the functions in \cref{tab:meijergreptable}. Note that, while we do not explicitly provide them, scaled versions are obtained for most formulae in this paper by setting $x=\lambda y$ with $\lambda \in \mathbb{R}$ and using the scaling property of the fractional Laplacian. Shifted versions are likewise obtained by means of variable transforms.

\begin{theorem}\label{thm:fractionallaplacianformstheorem1D}
Given a function $f(x)$ in column 2 of \cref{tab:fractionallaplacetable}, then the explicit expression of $(-\Delta)^s f(x)$ is the expression in column 3, subject to the following sufficient conditions inherited from \cref{thm:meijergDKKtheorem1,thm:meijergDKKtheorem2}:
\begin{enumerate}[label=(\roman*)]
\item For Rows 1--7, either $s>0$, in which case $(-\Delta)^s$ denotes the fractional Laplacian, or $-1/2<s<0$, in which case it denotes the Riesz potential. For Rows 8--10 and 12--14 we require $s>0$. For Row 11 we require either $s>0$ or $-1/4<s<0$.
\item The row-specific parameter conditions are: $a>-1$ in Row 1; $\lambda>-1/2$ and $\lambda\neq0$ in Row 2; $a>-1$ and $b>-1/2$ in Row 5; $\operatorname{Re}(\alpha)>\max(-s-n-1,-n-1)$ in Row 7; $\operatorname{Re}(\nu)>-1$ in Row 8; $\operatorname{Re}(\nu)>-1/2$ in Rows 9--10; $\operatorname{Re}(\mu+\nu)>-1$ in Row 11; $|\operatorname{Re}(\nu)|<1$ in Row 12; $|\operatorname{Re}(\nu)|<1/2$ in Row 13; and $|\operatorname{Re}(\nu)|<3/2$ in Row 14. The restriction $-\mu-\nu-1\notin\mathbb N$ from \cref{tab:meijergreptable} also applies to Row 11.
\item The formulas hold for $x\neq0$. They extend to $x=0$ whenever both the input and the displayed output are continuous there. In Rows 1--5, $|x|=1$ is excluded unless $\sigma>2s$, where $\sigma=a,\lambda-1/2,-1/2,1/2,a$ in Rows 1--5, respectively.
\end{enumerate}
Tables \ref{tab:fractionallaplacetable2} and \ref{tab:fractionallaplacetable2continued} contain the corresponding hypergeometric forms for $(-\Delta)^s f(x)$ where available; values at removable singularities of those representations are understood by limits.
\end{theorem}
\begin{proof}
The proofs for each row in  \cref{tab:fractionallaplacetable} are diligent applications of Theorems \ref{thm:meijergDKKtheorem1} and \ref{thm:meijergDKKtheorem2}. We concisely state the cases of these theorems that the respective proofs fall under.\\
\begin{enumerate}[start=1,label={(\bfseries Row \arabic*):}]
\item Via \cref{tab:meijergreptable}, set $\mathbf{a} = (a+\lfloor\frac{n}{2} \rfloor +1, -n+ \lfloor \frac{n}{2}\rfloor + \frac{1}{2})$ and $\mathbf{b} = (0, -n+2\lfloor \frac{n}{2}\rfloor+\frac{1}{2})$ in \cref{thm:meijergDKKtheorem1,thm:meijergDKKtheorem2}  with $(m,n,p,q) = (2,0,2,2)$ and $\ell = n - 2\lfloor \frac{n}{2} \rfloor$. Then use the permutation symmetry property followed by the cancellation-reduction property in \eqref{eq:meijerGfullcancellation2}.
\item Gegenbauer special case of Row 1.
\item First kind Chebyshev special case of Row 1.
\item Second kind Chebyshev special case of Row 1. For related results for weighted Chebyshev polynomials we also refer to the independent methods described in \cite[Prop.~3.5]{papadopoulos2022sparse}.
\item Via \cref{tab:meijergreptable}, set $\mathbf{a} = (a+b+n+1, -n)$ and $\mathbf{b} = (0,b)$ in \cref{thm:meijergDKKtheorem1,thm:meijergDKKtheorem2} with $(m,n,p,q) = (2,0,2,2)$ and $\ell=0$. Then use the permutation symmetry property followed by the cancellation-reduction property in \eqref{eq:meijerGfullcancellation2}. To our knowledge this formula was first explicitly stated in \cite{dyda2017fractional}.
\item Via \cref{tab:meijergreptable}, set $\mathbf{a} = (-n+\lfloor\frac{n}{2} \rfloor +\frac{1}{2})$ and $\mathbf{b} = (0,-n+2\lfloor\frac{n}{2} \rfloor +\frac{1}{2})$ in \cref{thm:meijergDKKtheorem1,thm:meijergDKKtheorem2} with $(m,n,p,q) = (2,0,1,2)$ and $\ell = n - 2\lfloor \frac{n}{2} \rfloor$. Then use the permutation symmetry property followed by the cancellation-reduction property in \eqref{eq:meijerGfullcancellation2}. A different strategy for a related Hermite polynomial result can be found in \cite{Li2021}.
\item Via \cref{tab:meijergreptable}, set $\mathbf{a} = (-n-\alpha)$ and $\mathbf{b} = (0,-\alpha)$ in \cref{thm:meijergDKKtheorem1,thm:meijergDKKtheorem2} with $(m,n,p,q) = (1,1,1,2)$ and $\ell=0$.
\item Via \cref{tab:meijergreptable}, set $\mathbf{a} = ()$, i.e. containing no entries, and $\mathbf{b} = (\frac{\nu}{2},-\frac{\nu}{2})$  in \cref{thm:meijergDKKtheorem1,thm:meijergDKKtheorem2} with $(m,n,p,q) = (1,0,0,2)$.
\item Via \cref{tab:meijergreptable}, set $\mathbf{a} = (\frac{1}{4}, \frac{3}{4}, \frac{a}{\pi}+\frac{\nu+1}{2})$ and $\mathbf{b} = (\frac{\nu}{2},\frac{\nu+1}{2},-\frac{\nu}{2},\frac{1-\nu}{2}, \frac{a}{\pi}+\frac{\nu+1}{2})$ in \cref{thm:meijergDKKtheorem1,thm:meijergDKKtheorem2} with $(m,n,p,q) = (2,2,3,5)$.
\item Via \cref{tab:meijergreptable}, set  $\mathbf{a} = (\frac{1}{4}, \frac{3}{4}, \frac{a}{\pi}+\frac{\nu}{2})$ and $\mathbf{b} = (\frac{\nu}{2},\frac{\nu+1}{2},-\frac{\nu}{2},\frac{1-\nu}{2}, \frac{a}{\pi}+\frac{\nu}{2})$ in \cref{thm:meijergDKKtheorem1,thm:meijergDKKtheorem2} with $(m,n,p,q) = (2,2,3,5)$.
\item Via \cref{tab:meijergreptable}, set  $\mathbf{a} = (0,\frac{1}{2})$ and $\mathbf{b}=(\frac{\mu+\nu}{2}, -\frac{\mu+\nu}{2}, \frac{\mu-\nu}{2},\frac{\nu-\mu}{2})$ in \cref{thm:meijergDKKtheorem1,thm:meijergDKKtheorem2} with $(m,n,p,q) = (1,2,2,4)$.
\item Via \cref{tab:meijergreptable}, set  $\mathbf{a} = (-\frac{\nu+1}{2})$ and $\mathbf{b} = (\frac{\nu}{2}, -\frac{\nu}{2},-\frac{\nu+1}{2})$ in \cref{thm:meijergDKKtheorem1,thm:meijergDKKtheorem2} with $(m,n,p,q) = (2,0,1,3)$.
\item Via \cref{tab:meijergreptable}, set  $\mathbf{a} = (\frac{1}{4}, \frac{3}{4}, -\frac{\nu+1}{2})$ and $\mathbf{b} = (-\frac{\nu}{2}, \frac{\nu}{2}, -\frac{\nu+1}{2}, \frac{1-\nu}{2}, \frac{\nu+1}{2})$ in \cref{thm:meijergDKKtheorem1,thm:meijergDKKtheorem2} with $(m,n,p,q) = (2,2,3,5)$.
\item Via \cref{tab:meijergreptable}, set  $\mathbf{a} = (\frac{1}{4}, \frac{3}{4}, -\frac{\nu}{2})$ and $\mathbf{b} = (\frac{\nu+1}{2}, \frac{1-\nu}{2}, -\frac{\nu}{2}, -\frac{\nu}{2}, \frac{\nu}{2})$ in \cref{thm:meijergDKKtheorem1,thm:meijergDKKtheorem2} with $(m,n,p,q) = (2,2,3,5)$.
\end{enumerate}
\end{proof}

We describe the general procedure used to derive the results in Tables \ref{tab:fractionallaplacetable2} and \ref{tab:fractionallaplacetable2continued}. Obtaining the hypergeometric forms is arduous but straightforward, as these results essentially rely on reduction formulae such as the ones listed in \cref{sec:app:reduction}. Note that since symbolic software packages such as Wolfram Mathematica are mostly aware of these reduction formulae, they can be used to check these expressions in an automated way (e.g.~by using \texttt{FunctionExpand}[$\cdot$] on the Meijer-G forms). To give some explicit examples: Rows $1$ to $6$ are derived via the hypergeometric reduction $(2,1,3,3)$ in  \cref{tab:meijergreductionappendix}, followed by standard hypergeometric function simplification arguments. Rows $6$ and $7$ are obtained via the $(2,1,2,3)$ and $(1,2,2,3)$ reduction formulae in \cref{tab:meijergreductionappendix}.

All of the special case formulas in \cref{tab:fractionallaplacetabl3continued} are derived from Tables \ref{tab:fractionallaplacetable2} and \ref{tab:fractionallaplacetable2continued}. The row ID number matches the row in Tables \ref{tab:fractionallaplacetable2} and \ref{tab:fractionallaplacetable2continued} (and by extension \cref{tab:fractionallaplacetable}) from which they are derived by applying the indicated special case via limit arguments. Certain special cases appear to not exist for the hypergeometric forms, e.g.~the results for half-Laplacian special case $s=1/2$ . This is an artefact of the hypergeometric representation and not present in the Meijer-G form.

    \clearpage
    \thispagestyle{empty}
{\newgeometry{left=1cm,right=1cm,bottom=1cm,top=1cm}
    \begin{landscape}
        \centering 
\begin{tabular}{c c c}
 \textbf{Row ID}& $\mathbf{f(x)}$         &      $\mathbf{(-\Delta)^s f(x)}$ (Meijer-G form)    \\ \hline \hline  
\textbf{1} & $ (1-x^2)_+^a P_n^{(a,a)}(x) $           &  $\frac{4^s\Gamma (a+n+1)}{n!} x^{n-2 \left\lfloor \frac{n}{2}\right\rfloor } G_{3,3}^{2,1}\left(x^2 \left|
\begin{array}{c}
 \frac{1}{2}-s-(n-2\left\lfloor \frac{n}{2}\right\rfloor ),a+\left\lfloor \frac{n}{2}\right\rfloor +1-s,-n+\left\lfloor \frac{n}{2}\right\rfloor +\frac{1}{2}-s \\
 0,-n+2 \left\lfloor \frac{n}{2}\right\rfloor +\frac{1}{2}-s,\frac{1}{2}-(n-2\left\lfloor \frac{n}{2}\right\rfloor ) \\
\end{array}\right.
\right)$       \\  
\textbf{2} & $ (1-x^2)_+^{\lambda-\frac{1}{2}} C_n^{(\lambda)}(x) $           & $\frac{4^s \Gamma (\lambda+\frac{1}{2}+n) \left(2\lambda\right)_{n}}{n! \left(\lambda+\frac{1}{2}\right)_{n}} x^{n-2 \left\lfloor \frac{n}{2}\right\rfloor } G_{3,3}^{2,1}\left(x^2 \left|
\begin{array}{c}
 \frac{1}{2}-s-(n-2\left\lfloor \frac{n}{2}\right\rfloor ),\lambda+\frac{1}{2}+\left\lfloor \frac{n}{2}\right\rfloor-s,-n+\left\lfloor \frac{n}{2}\right\rfloor +\frac{1}{2}-s \\
 0,-n+2 \left\lfloor \frac{n}{2}\right\rfloor +\frac{1}{2}-s,\frac{1}{2}-(n-2\left\lfloor \frac{n}{2}\right\rfloor ) \\
\end{array}\right.
\right)$        \\  
\textbf{3} & $ (1-x^2)^{-1/2}_+ T_n(x) $             & $\frac{4^s\Gamma (n+\frac{1}{2})}{n! P^{(-\nicefrac{1}{2},-\nicefrac{1}{2})}_{n}\left(1\right)} x^{n-2 \left\lfloor \frac{n}{2}\right\rfloor } G_{3,3}^{2,1}\left(x^2 \left|
\begin{array}{c}
 \frac{1}{2}-s-(n-2\left\lfloor \frac{n}{2}\right\rfloor ),\left\lfloor \frac{n}{2}\right\rfloor +\frac{1}{2}-s,-n+\left\lfloor \frac{n}{2}\right\rfloor +\frac{1}{2}-s \\
 0,-n+2 \left\lfloor \frac{n}{2}\right\rfloor +\frac{1}{2}-s,\frac{1}{2}-(n-2\left\lfloor \frac{n}{2}\right\rfloor ) \\
\end{array}\right.
\right)$  \\  
\textbf{4} & $ (1-x^2)^{1/2}_+ U_n(x) $       & $\frac{4^s(n+1)\Gamma (n+\frac{3}{2})}{n! P^{(\nicefrac{1}{2},\nicefrac{1}{2})}_{n}\left(1\right)} x^{n-2 \left\lfloor \frac{n}{2}\right\rfloor } G_{3,3}^{2,1}\left(x^2 \left|
\begin{array}{c}
 \frac{1}{2}-s-(n-2\left\lfloor \frac{n}{2}\right\rfloor ),\left\lfloor \frac{n}{2}\right\rfloor +\frac{3}{2}-s,-n+\left\lfloor \frac{n}{2}\right\rfloor +\frac{1}{2}-s \\
 0,-n+2 \left\lfloor \frac{n}{2}\right\rfloor +\frac{1}{2}-s,\frac{1}{2}-(n-2\left\lfloor \frac{n}{2}\right\rfloor ) \\
\end{array}\right.
\right)$   \\  
\textbf{5} &  $(1-x^2)_+^a(x^2)^b P_n^{(a,b)}(2x^2-1)$             & $\frac{4^s \Gamma (a+n+1)}{n!} G_{3,3}^{2,1}\left(x^2 \left\lvert
\begin{array}{c}
 \frac{1}{2}-s,a+b+n-s+1,-n-s \\
 0,b-s,\frac{1}{2} \\
\end{array}\right.
\right)$        \\  
\textbf{6} & $ \E^{-x^2} H_n(x) $           & $2^n4^s x^{n-2 \left\lfloor \frac{n}{2}\right\rfloor } G_{2,3}^{2,1}\left(x^2 \left\lvert
\begin{array}{c}
 -n-s+2 \left\lfloor \frac{n}{2}\right\rfloor +\frac{1}{2},-n-s+\left\lfloor \frac{n}{2}\right\rfloor +\frac{1}{2} \\
 0,-n-s+2 \left\lfloor \frac{n}{2}\right\rfloor +\frac{1}{2},-n+2 \left\lfloor \frac{n}{2}\right\rfloor +\frac{1}{2} \\
\end{array}
\right.\right)$   \\  
\textbf{7} & $ \E^{-x^2} L^{\alpha}_n(x^2) $           & $\frac{4^s}{n!} G_{2,3}^{1,2}\left(x^2 \left\lvert
\begin{array}{c}
 \frac{1}{2}-s,-n-s-\alpha  \\
 0,-\alpha -s,\frac{1}{2} \\
\end{array} \right.
\right)$   \\   
\textbf{8} &      $J_{\nu }(2 |x|)$        &   $4^s G_{2,4}^{2,1}\left(x^2\left\lvert
\begin{array}{c}
 \frac{1}{2}-s,-s \\
 0,\frac{\nu}{2}-s,\frac{1}{2},-\frac{\nu }{2}-s \\
\end{array} \right.
\right)$     \\ 
\textbf{9} &     $\cos(a+|x|) J_{\nu }(|x|)$        &   $2^{2 s-\frac{1}{2}} G_{5,7}^{3,3}\left(x^2 \left\lvert
\begin{array}{c}
 \frac{1}{4}-s,\frac{1}{2}-s,\frac{3}{4}-s,-s,\frac{a}{\pi }+\frac{\nu +1}{2}- s \\
 0,\frac{\nu}{2}-s ,\frac{\nu +1}{2}-s ,\frac{1}{2},\frac{1-\nu}{2}-s,-s-\frac{\nu }{2},\frac{a}{\pi }+\frac{\nu +1}{2}-s  \\
\end{array}\right.
\right)$     \\
\textbf{10}      & $\sin(a+|x|) J_{\nu }(|x|)$     &  $2^{2 s-\frac{1}{2}} G_{5,7}^{3,3}\left(x^2 \left\lvert
\begin{array}{c}
 \frac{1}{4}-s,\frac{1}{2}-s,\frac{3}{4}-s,-s,\frac{a}{\pi }+\frac{\nu}{2}- s \\
 0,\frac{\nu}{2}-s ,\frac{\nu +1}{2}-s ,\frac{1}{2},\frac{1-\nu}{2}-s,-s-\frac{\nu }{2},\frac{a}{\pi }+\frac{\nu}{2}-s  \\
\end{array}\right.
\right)$\\
\textbf{11} &   $J_{\mu }(|x|) J_{\nu }(|x|)$        &  $\frac{4^s}{\sqrt{\pi }} G_{4,6}^{2,3}\left(x^2 \left\lvert
\begin{array}{c}
 \frac{1}{2}-s,-s,\frac{1}{2}-s,-s \\
 0,\frac{\mu +\nu }{2}-s,-\frac{\mu +\nu}{2}-s,\frac{\mu -\nu }{2}-s,\frac{\nu -\mu }{2}-s,\frac{1}{2} \\
\end{array}\right.
\right)$     \\ 
\textbf{12} &   $Y_{\nu }(2 |x|)$        & $4^s G_{3,5}^{3,1}\left(x^2\left|
\begin{array}{c}
 \frac{1}{2}-s,-\frac{\nu +1}{2}-s,-s \\
 0,\frac{\nu }{2}-s,-\frac{\nu }{2}-s,-\frac{\nu +1}{2}-s,\frac{1}{2} \\
\end{array}\right.
\right)$\\
\textbf{13} &   $\cos(|x|) Y_{\nu }(|x|)$        &  $2^{2 s-\frac{1}{2}} G_{5,7}^{3,3}\left(x^2 \left\lvert
\begin{array}{c}
 \frac{1}{2}-s,\frac{1}{4}-s,\frac{3}{4}-s,-\frac{\nu +1}{2}-s,-s \\
 0,-\frac{\nu }{2}-s,\frac{\nu }{2}-s,-\frac{\nu +1}{2}-s,\frac{1-\nu }{2}-s,\frac{\nu +1}{2}-s,\frac{1}{2} \\
\end{array}\right.
\right)$\\
\textbf{14} &   $\sin(|x|) Y_{\nu }(|x|)$      & $2^{2 s-\frac{1}{2}} G_{5,7}^{3,3}\left(x^2 \left\lvert
\begin{array}{c}
 \frac{1}{2}-s,\frac{1}{4}-s,\frac{3}{4}-s,-s-\frac{\nu }{2}, -s \\
 0,\frac{1-\nu}{2}-s,\frac{1+\nu}{2}-s,-\frac{\nu }{2}-s,-\frac{\nu }{2}-s,\frac{\nu}{2}-s,\frac{1}{2} \\
\end{array}\right.
\right)$
\end{tabular}
\captionof{table}{Explicit Meijer-G function forms of the fractional Laplacian and Riesz potential of classical functions, subject to the order, parameter and pointwise conditions in the preceding theorem.}
\label{tab:fractionallaplacetable}
    \end{landscape}}
    \clearpage

\newpage
    \clearpage
    \thispagestyle{empty}
    {\newgeometry{left=1cm,right=1cm,bottom=1cm,top=1cm}
    \begin{landscape}
        \centering 
\begin{tabular}{c c}
 \textbf{Row ID}&      $\mathbf{(-\Delta)^s f(x)}$ (Hypergeometric form)   \\ \hline \hline
\textbf{1}          &  $4^{s}\frac{\Gamma (a+n+1)}{n!} x^{n-2 \left\lfloor \frac{n}{2}\right\rfloor }  \begin{cases}  \frac{\pi \, _2F_1\left(-a+s-\left\lfloor \frac{n}{2}\right\rfloor ,n+s-\left\lfloor \frac{n}{2}\right\rfloor +\frac{1}{2};n-2 \left\lfloor \frac{n}{2}\right\rfloor +\frac{1}{2};x^2\right)}{\sin \left(\pi  \left(2 \left\lfloor \frac{n}{2}\right\rfloor - n- s+\frac{1}{2}\right)\right)\Gamma \left(n-2 \left\lfloor \frac{n}{2}\right\rfloor +\frac{1}{2}\right) \Gamma \left(-n-s+\left\lfloor \frac{n}{2}\right\rfloor +\frac{1}{2}\right) \Gamma \left(a-s+\left\lfloor \frac{n}{2}\right\rfloor +1\right)}  , \quad |x|<1  \\
\frac{-2^{-n-2 s} \sin (\pi  s) \Gamma (n+2 s+1) | x| ^{-2 \left\lfloor \frac{n-1}{2}\right\rfloor -2 s-3} \, _2F_1\left(s+\left\lfloor \frac{n}{2}\right\rfloor +1,\frac{2 \left\lfloor \frac{n-1}{2}\right\rfloor +3}{2} +s;\frac{2 n+3}{2}+a;\frac{1}{x^2}\right)}{\sqrt{\pi } \Gamma \left(\frac{2 n+3}{2} +a\right)}, \quad |x|>1  \end{cases}$       \\  
\textbf{2}  & $\frac{4^s \Gamma (\lambda+\frac{1}{2}+n) \left(2\lambda\right)_{n}}{n! \left(\lambda+\frac{1}{2}\right)_{n}} x^{n-2 \left\lfloor \frac{n}{2}\right\rfloor } \begin{cases}  \frac{\pi \, _2F_1\left(-\lambda+\frac{1}{2}+s-\left\lfloor \frac{n}{2}\right\rfloor ,n+s-\left\lfloor \frac{n}{2}\right\rfloor +\frac{1}{2};n-2 \left\lfloor \frac{n}{2}\right\rfloor +\frac{1}{2};x^2\right)}{\sin \left(\pi  \left(2 \left\lfloor \frac{n}{2}\right\rfloor - n- s+\frac{1}{2}\right)\right)\Gamma \left(n-2 \left\lfloor \frac{n}{2}\right\rfloor +\frac{1}{2}\right) \Gamma \left(-n-s+\left\lfloor \frac{n}{2}\right\rfloor +\frac{1}{2}\right) \Gamma \left(\lambda-s+\left\lfloor \frac{n}{2}\right\rfloor +\frac{1}{2}\right)} , \quad |x|<1  \\ \frac{-2^{-n-2 s} \sin (\pi  s) \Gamma (n+2 s+1) | x| ^{-2 \left\lfloor \frac{n-1}{2}\right\rfloor -2 s-3} \, _2F_1\left(s+\left\lfloor \frac{n}{2}\right\rfloor +1,\frac{2 \left\lfloor \frac{n-1}{2}\right\rfloor +3}{2}+s;n+1+\lambda;\frac{1}{x^2}\right)}{\sqrt{\pi } \Gamma \left(n+1+\lambda\right)}, \quad |x|>1 \end{cases}$        \\  
\textbf{3}      & $\frac{4^s\Gamma (n+\frac{1}{2})}{n! P^{(-\nicefrac{1}{2},-\nicefrac{1}{2})}_{n}\left(1\right)} x^{n-2 \left\lfloor \frac{n}{2}\right\rfloor } \begin{cases}  \frac{\pi \, _2F_1\left(\frac{1}{2}+s-\left\lfloor \frac{n}{2}\right\rfloor ,n+s-\left\lfloor \frac{n}{2}\right\rfloor +\frac{1}{2};n-2 \left\lfloor \frac{n}{2}\right\rfloor +\frac{1}{2};x^2\right)}{\sin \left(\pi  \left(2 \left\lfloor \frac{n}{2}\right\rfloor - n- s+\frac{1}{2}\right)\right)\Gamma \left(n-2 \left\lfloor \frac{n}{2}\right\rfloor +\frac{1}{2}\right) \Gamma \left(-n-s+\left\lfloor \frac{n}{2}\right\rfloor +\frac{1}{2}\right) \Gamma \left(-s+\left\lfloor \frac{n}{2}\right\rfloor +\frac{1}{2}\right)} , \quad |x|<1  \\ \frac{-2^{-n-2 s} \sin (\pi  s) \Gamma (n+2 s+1) | x| ^{-2 \left\lfloor \frac{n-1}{2}\right\rfloor -2 s-3} \, _2F_1\left(s+\left\lfloor \frac{n}{2}\right\rfloor +1,\frac{2 \left\lfloor \frac{n-1}{2}\right\rfloor +3}{2}+s;n+1;\frac{1}{x^2}\right)}{\sqrt{\pi } \Gamma \left(n+1 \right)}, \quad |x|>1 \end{cases}$   \\  
\textbf{4}      & $\frac{4^s(n+1)\Gamma (n+\frac{3}{2})}{n! P^{(\nicefrac{1}{2},\nicefrac{1}{2})}_{n}\left(1\right)} x^{n-2 \left\lfloor \frac{n}{2}\right\rfloor } \begin{cases}  \frac{\pi \, _2F_1\left(s-\frac{1}{2}-\left\lfloor \frac{n}{2}\right\rfloor ,n+s-\left\lfloor \frac{n}{2}\right\rfloor +\frac{1}{2};n-2 \left\lfloor \frac{n}{2}\right\rfloor +\frac{1}{2};x^2\right)}{\sin \left(\pi  \left(2 \left\lfloor \frac{n}{2}\right\rfloor - n- s+\frac{1}{2}\right)\right)\Gamma \left(n-2 \left\lfloor \frac{n}{2}\right\rfloor +\frac{1}{2}\right) \Gamma \left(-n-s+\left\lfloor \frac{n}{2}\right\rfloor +\frac{1}{2}\right) \Gamma \left(1-s+\left\lfloor \frac{n}{2}\right\rfloor +\frac{1}{2}\right)}  , \quad |x|<1  \\ \frac{-2^{-n-2 s} \sin (\pi  s) \Gamma (n+2 s+1) | x| ^{-2 \left\lfloor \frac{n-1}{2}\right\rfloor -2 s-3} \, _2F_1\left(s+\left\lfloor \frac{n}{2}\right\rfloor +1,\frac{2 \left\lfloor \frac{n-1}{2}\right\rfloor +3}{2}+s;n+2;\frac{1}{x^2}\right)}{\sqrt{\pi } \Gamma \left(n+2\right)}, \quad |x|>1 \end{cases}$   \\  
\textbf{5}     & $\frac{\Gamma (a+n+1)}{n!} \begin{cases} 4^s\left(\frac{\Gamma \left(s+\frac{1}{2}\right)\Gamma(b-s)\,{}_3F_2\left(s+\frac{1}{2},-a-b-n+s,n+s+1;\frac{1}{2},-b+s+1;x^2\right)}{\sqrt{\pi}\,\Gamma(-n-s)\Gamma(a+b+n-s+1)}\right. \\ \qquad\left.+\frac{(x^2)^{b-s}\Gamma(s-b)\Gamma\left(b+\frac{1}{2}\right)\,{}_3F_2\left(b+\frac{1}{2},-a-n,n+b+1;b-s+1,b-s+\frac{1}{2};x^2\right)}{\Gamma\left(b-s+\frac{1}{2}\right)\Gamma(a+n+1)\Gamma(-b-n)}\right), \quad |x|<1 \\ \frac{-\Gamma \left(b+\frac{1}{2}\right) \sin (\pi  s) \Gamma (2 s+1) | x| ^{-2s-1} \, _3F_2\left(b+\frac{1}{2},s+\frac{1}{2},s+1;\frac{1}{2}-n,a+b+n+\frac{3}{2};\frac{1}{x^2}\right)}{\sqrt{\pi } \Gamma \left(\frac{1}{2}-n\right) \Gamma \left(a+b+n+\frac{3}{2}\right)}, \quad |x|>1 \end{cases}$        \\  
\textbf{6}          & $\tfrac{2^n\pi  4^s x^{n-2 \left\lfloor \frac{n}{2}\right\rfloor } \, _1F_1\left(n+s-\left\lfloor \frac{n}{2}\right\rfloor +\frac{1}{2};n-2 \left\lfloor \frac{n}{2}\right\rfloor +\frac{1}{2};-x^2\right)}{\sin \left(\frac{1}{2} \pi  \left(4 \left\lfloor \frac{n}{2}\right\rfloor -2 n-2 s+1\right)\right)\Gamma \left(n-2 \left\lfloor \frac{n}{2}\right\rfloor +\frac{1}{2}\right) \Gamma \left(-n-s+\left\lfloor \frac{n}{2}\right\rfloor +\frac{1}{2}\right)}$    \\
\textbf{7}       & $\frac{4^s \Gamma \left(s+\frac{1}{2}\right) \Gamma (n+s+\alpha +1) \, _2F_2\left(s+\frac{1}{2},n+s+\alpha +1;\frac{1}{2},s+\alpha +1;-x^2\right)}{\sqrt{\pi } \Gamma (n+1) \Gamma (s+\alpha +1)}$   \\  
\textbf{8}           &     $ \frac{\sin \left(\frac{\pi  \nu }{2}\right) \left(x^2\right)^{\frac{1}{2} (\nu -2 s)} \, _2F_3\left(\frac{\nu }{2}+\frac{1}{2},\frac{\nu }{2}+1;-s+\frac{\nu }{2}+\frac{1}{2},-s+\frac{\nu }{2}+1,\nu +1;-x^2\right)}{\sin \left(\frac{\pi  (\nu -2 s)}{2}\right) \Gamma (-2 s+\nu +1)}-\frac{\sin (\pi  s) \Gamma (2 s+1) \, _2F_3\left(s+\frac{1}{2},s+1;\frac{1}{2},s-\frac{\nu }{2}+1,s+\frac{\nu }{2}+1;-x^2\right)}{ \sin \left(\frac{\pi  (\nu -2 s)}{2} \right) \Gamma \left(s-\frac{\nu }{2}+1\right) \Gamma \left(s+\frac{\nu }{2}+1\right)}$  \\ 
\textbf{9}           &    $ -\frac{2^{-\nu } | x| ^{\nu -2 s}}{\Gamma (-2 s+\nu +1)} \left(\frac{(\nu +1) \sin (a)  \cos \left(\frac{\pi  \nu }{2}\right) | x| \, _4F_5\left(\frac{\nu }{2}+\frac{3}{4},\frac{\nu }{2}+1,\frac{\nu }{2}+\frac{5}{4},\frac{\nu }{2}+\frac{3}{2};\frac{3}{2},-s+\frac{\nu }{2}+1,-s+\frac{\nu }{2}+\frac{3}{2},\nu +1,\nu +\frac{3}{2};-x^2\right)}{(-\nu +2 s-1) \cos \left(\frac{1}{2} \pi  (\nu -2 s+2)\right)}+\frac{\cos (a) \sin \left(\frac{\pi  \nu }{2}\right) \, _4F_5\left(\frac{\nu }{2}+\frac{1}{4},\frac{\nu }{2}+\frac{1}{2},\frac{\nu }{2}+\frac{3}{4},\frac{\nu }{2}+1;\frac{1}{2},-s+\frac{\nu }{2}+\frac{1}{2},-s+\frac{\nu }{2}+1,\nu +\frac{1}{2},\nu +1;-x^2\right)}{\sin \left(\pi  \left(s-\frac{\nu }{2}\right)\right)}\right)$  \\ 
& $+\frac{4^{-s} \sin (\pi  s) \Gamma (4 s+1) \cos \left(a+\frac{\pi  \nu }{2}-\pi  s\right) \, _4F_5\left(s+\frac{1}{4},s+\frac{1}{2},s+\frac{3}{4},s+1;\frac{1}{2},s-\frac{\nu }{2}+\frac{1}{2},s-\frac{\nu }{2}+1,s+\frac{\nu }{2}+\frac{1}{2},s+\frac{\nu }{2}+1;-x^2\right)}{\sin \left(\pi  \left(s-\frac{\nu }{2}\right)\right) \cos \left(\pi  \left(s-\frac{\nu }{2}\right)\right) \Gamma (2 s-\nu +1) \Gamma (2 s+\nu +1)}$\\
\textbf{10}           &  $-\frac{2^{-\nu } | x| ^{\nu -2 s}}{\Gamma (-2 s+\nu +1)} \left(\frac{(\nu +1) \cos (a)  \cos \left(\frac{\pi  \nu }{2}\right) | x| \, _4F_5\left(\frac{\nu }{2}+\frac{3}{4},\frac{\nu }{2}+1,\frac{\nu }{2}+\frac{5}{4},\frac{\nu }{2}+\frac{3}{2};\frac{3}{2},-s+\frac{\nu }{2}+1,-s+\frac{\nu }{2}+\frac{3}{2},\nu +1,\nu +\frac{3}{2};-x^2\right)}{(\nu -2 s+1) \cos \left(\frac{1}{2} \pi  (\nu -2 s+2)\right)}+\frac{\sin (a) \sin \left(\frac{\pi  \nu }{2}\right) \, _4F_5\left(\frac{\nu }{2}+\frac{1}{4},\frac{\nu }{2}+\frac{1}{2},\frac{\nu }{2}+\frac{3}{4},\frac{\nu }{2}+1;\frac{1}{2},-s+\frac{\nu }{2}+\frac{1}{2},-s+\frac{\nu }{2}+1,\nu +\frac{1}{2},\nu +1;-x^2\right)}{\sin \left(\pi  \left(s-\frac{\nu }{2}\right)\right)}\right)$\\
   &  $+\frac{2^{1-2 s} \sin (\pi  s) \Gamma (4 s+1) \sin \left(a+\frac{\pi  \nu }{2}-\pi  s\right) \, _4F_5\left(s+\frac{1}{4},s+\frac{1}{2},s+\frac{3}{4},s+1;\frac{1}{2},s-\frac{\nu }{2}+\frac{1}{2},s-\frac{\nu }{2}+1,s+\frac{\nu }{2}+\frac{1}{2},s+\frac{\nu }{2}+1;-x^2\right)}{\sin (2 \pi  s-\pi  \nu ) \Gamma (2 s-\nu +1) \Gamma (2 s+\nu +1)}$
\end{tabular}
\captionof{table}{Hypergeometric forms for the Meijer-G functions in \cref{tab:fractionallaplacetable} with matching Row IDs, subject to the conditions and joint-limit convention stated above. Formulas with row IDs containing *s are special cases of the corresponding row ID number.}
\label{tab:fractionallaplacetable2}
    \end{landscape}}
    \clearpage

\newpage
    \clearpage
    \thispagestyle{empty}
    {\newgeometry{left=1cm,right=1cm,bottom=1cm,top=1cm}
    \begin{landscape}
        \centering 
\begin{tabular}{c c}
 \textbf{Row ID}&      $\mathbf{(-\Delta)^s f(x)}$ (Hypergeometric form)   \\ \hline \hline
 \textbf{11}           & $ -\frac{2^{-\mu -\nu } \sin \left(\frac{1}{2} \pi  (\mu +\nu )\right) \Gamma (\mu +\nu +1) | x| ^{\mu +\nu -2 s} \, _4F_5\left(\frac{\mu }{2}+\frac{\nu }{2}+\frac{1}{2},\frac{\mu }{2}+\frac{\nu }{2}+\frac{1}{2},\frac{\mu }{2}+\frac{\nu }{2}+1,\frac{\mu }{2}+\frac{\nu }{2}+1;\mu +1,-s+\frac{\mu }{2}+\frac{\nu }{2}+\frac{1}{2},-s+\frac{\mu }{2}+\frac{\nu }{2}+1,\nu +1,\mu +\nu +1;-x^2\right)}{\Gamma (\mu +1) \Gamma (\nu +1) \cos \left(\frac{1}{2} \pi  (\mu +\nu -2 s+1)\right) \Gamma (-2 s+\mu +\nu +1)}$\\
 & $-\frac{4^{-s} \sin (\pi  s) \Gamma (2 s+1)^2 \, _4F_5\left(s+\frac{1}{2},s+\frac{1}{2},s+1,s+1;\frac{1}{2},s-\frac{\mu }{2}-\frac{\nu }{2}+1,s+\frac{\mu }{2}-\frac{\nu }{2}+1,s+\frac{\nu }{2}-\frac{\mu }{2}+1,s+\frac{\mu }{2}+\frac{\nu }{2}+1;-x^2\right)}{\sin \left(\frac{1}{2} \pi  (\mu +\nu -2 s)\right) \Gamma \left(s-\frac{\mu }{2}-\frac{\nu }{2}+1\right) \Gamma \left(s+\frac{\mu }{2}-\frac{\nu }{2}+1\right) \Gamma \left(s+\frac{\nu }{2}-\frac{\mu }{2}+1\right) \Gamma \left(\frac{1}{2} (2 s+\mu +\nu +2)\right)}$\\
 \textbf{12}  &  $\frac{\sqrt{\pi } 2^{\nu +2 s+1} \sin (\pi  s) \Gamma (2 s+1) \, _2F_3\left(s+\frac{1}{2},s+1;\frac{1}{2},s-\frac{\nu }{2}+1,s+\frac{\nu }{2}+1;-x^2\right)}{\sin \left(\frac{1}{2} \pi  (\nu -2 s)\right) \sin \left(\frac{\pi  \nu }{2}+\pi  s\right) \Gamma \left(-s-\frac{\nu }{2}-\frac{1}{2}\right) \Gamma \left(s-\frac{\nu }{2}+1\right) \Gamma (2 s+\nu +2)}-\frac{| x| ^{-\nu -2 s} \, _2F_3\left(\frac{1}{2}-\frac{\nu }{2},1-\frac{\nu }{2};1-\nu ,-s-\frac{\nu }{2}+\frac{1}{2},-s-\frac{\nu }{2}+1;-x^2\right)}{2 \cos \left(\frac{\pi  \nu }{2}\right) \sin \left(\frac{\pi  \nu }{2}+\pi  s\right) \Gamma (-2 s-\nu +1)}$ \\
 & $-\frac{\sin \left(\frac{\pi  \nu }{2}\right) \cos (\pi  \nu ) | x| ^{\nu -2 s} \, _2F_3\left(\frac{\nu }{2}+\frac{1}{2},\frac{\nu }{2}+1;-s+\frac{\nu }{2}+\frac{1}{2},-s+\frac{\nu }{2}+1,\nu +1;-x^2\right)}{\sin (\pi  \nu ) \sin \left(\pi  \left(s-\frac{\nu }{2}\right)\right) \Gamma (-2 s+\nu +1)}$\\
\textbf{13}        & $-\frac{2^{\nu -1} | x| ^{-\nu -2 s} \, _4F_5\left(\frac{1}{4}-\frac{\nu }{2},\frac{1}{2}-\frac{\nu }{2},\frac{3}{4}-\frac{\nu }{2},1-\frac{\nu }{2};\frac{1}{2},\frac{1}{2}-\nu ,1-\nu ,-s-\frac{\nu }{2}+\frac{1}{2},-s-\frac{\nu }{2}+1;-x^2\right)}{\cos \left(\frac{\pi  \nu }{2}\right) \sin \left(\frac{\pi  \nu }{2}+\pi  s\right) \Gamma (-2 s-\nu +1)}-\frac{4^{-s} \sin (\pi  s) \Gamma (4 s+1) \cos \left(\frac{\pi  \nu }{2}+\pi  s\right) \, _4F_5\left(s+\frac{1}{4},s+\frac{1}{2},s+\frac{3}{4},s+1;\frac{1}{2},s-\frac{\nu }{2}+\frac{1}{2},s-\frac{\nu }{2}+1,s+\frac{\nu }{2}+\frac{1}{2},s+\frac{\nu }{2}+1;-x^2\right)}{\sin \left(\frac{1}{2} \pi  (\nu -2 s)\right) \sin \left(\frac{\pi  \nu }{2}+\pi  s\right) \Gamma (2 s-\nu +1) \Gamma (2 s+\nu +1)}$\\
&$-\frac{2^{-\nu } \sin \left(\frac{\pi  \nu }{2}\right) \cos (\pi  \nu ) | x| ^{\nu -2 s} \, _4F_5\left(\frac{\nu }{2}+\frac{1}{4},\frac{\nu }{2}+\frac{1}{2},\frac{\nu }{2}+\frac{3}{4},\frac{\nu }{2}+1;\frac{1}{2},-s+\frac{\nu }{2}+\frac{1}{2},-s+\frac{\nu }{2}+1,\nu +\frac{1}{2},\nu +1;-x^2\right)}{\sin (\pi  \nu ) \sin \left(\pi  \left(s-\frac{\nu }{2}\right)\right) \Gamma (-2 s+\nu +1)}$\\
\textbf{14}       & $\frac{2^{-\nu } (\nu +1) \cos \left(\frac{\pi  \nu }{2}\right) \cos (\pi  \nu ) | x| ^{\nu -2 s+1} \, _4F_5\left(\frac{\nu }{2}+\frac{3}{4},\frac{\nu }{2}+1,\frac{\nu }{2}+\frac{5}{4},\frac{\nu }{2}+\frac{3}{2};\frac{3}{2},-s+\frac{\nu }{2}+1,-s+\frac{\nu }{2}+\frac{3}{2},\nu +1,\nu +\frac{3}{2};-x^2\right)}{\sin (\pi  \nu ) \cos \left(\pi  \left(s-\frac{\nu }{2}\right)\right) \Gamma (-2 s+\nu +2)}-\frac{2^{\nu -1} (\nu -1) | x| ^{-\nu -2 s+1} \, _4F_5\left(\frac{3}{4}-\frac{\nu }{2},1-\frac{\nu }{2},\frac{5}{4}-\frac{\nu }{2},\frac{3}{2}-\frac{\nu }{2};\frac{3}{2},1-\nu ,\frac{3}{2}-\nu ,-s-\frac{\nu }{2}+1,-s-\frac{\nu }{2}+\frac{3}{2};-x^2\right)}{\sin \left(\frac{\pi  \nu }{2}\right) \sin \left(\frac{1}{2} \pi  (\nu +2 s-1)\right) \Gamma (-2 s-\nu +2)}$\\
& $+\frac{4^{-s} \sin (\pi  s) \Gamma (4 s+1) \sin \left(\frac{\pi  \nu }{2}+\pi  s\right) \, _4F_5\left(s+\frac{1}{4},s+\frac{1}{2},s+\frac{3}{4},s+1;\frac{1}{2},s-\frac{\nu }{2}+\frac{1}{2},s-\frac{\nu }{2}+1,s+\frac{\nu }{2}+\frac{1}{2},s+\frac{\nu }{2}+1;-x^2\right)}{\cos \left(\frac{\pi  \nu }{2}+\pi  s\right) \cos \left(\pi  \left(s-\frac{\nu }{2}\right)\right) \Gamma (2 s-\nu +1) \Gamma (2 s+\nu +1)}$
\end{tabular}
\captionof{table}{Continuation of \cref{tab:fractionallaplacetable2}.}
\label{tab:fractionallaplacetable2continued}
    \end{landscape}}
    \clearpage
\newpage

\newpage
    \clearpage
    \thispagestyle{empty}
    {\newgeometry{left=1cm,right=1cm,bottom=1cm,top=1cm}
    \begin{landscape}
        \centering 
\begin{tabular}{c c c c}
 \textbf{Row ID}& $\mathbf{f(x)}$ & \textbf{Special case} &       $\mathbf{(-\Delta)^s f(x)}$   \\ \hline \hline
  \textbf{1*}           & $(1-x^2)_+^s P_n^{(s,s)}(x)$ & $s=a$ & $\begin{cases}  \frac{4^s \Gamma \left(s+\left\lfloor \frac{n}{2}\right\rfloor +1\right) \Gamma \left(n+s-\left\lfloor \frac{n}{2}\right\rfloor +\frac{1}{2}\right)}{\left\lfloor \frac{n}{2}\right\rfloor ! \Gamma \left(n-\left\lfloor \frac{n}{2}\right\rfloor +\frac{1}{2}\right)} P_n^{(s,s)}(x) , \quad |x|<1  \\
-\frac{ \sin (\pi  s) x^{n-2 \left\lfloor \frac{n}{2}\right\rfloor } \Gamma (n+s+1) \Gamma (n+2 s+1) | x| ^{-2 \left\lfloor \frac{n-1}{2}\right\rfloor -2 s-3} \, _2F_1\left(s+\left\lfloor \frac{n-1}{2}\right\rfloor +\frac{3}{2},s+\left\lfloor \frac{n}{2}\right\rfloor +1;n+s+\frac{3}{2};\frac{1}{x^2}\right)}{2^n \sqrt{\pi } n! \Gamma \left(n+s+\frac{3}{2}\right)}, \quad |x|>1  \end{cases}$\\
 \textbf{1**}           & $(1-x^2)_+^a P_n^{(a,a)}(x)$ & $s=\frac{1}{2}$ & $\frac{\Gamma (a+n+1)}{n!} x^{n-2 \left\lfloor \frac{n}{2}\right\rfloor }  \begin{cases}  \frac{2(-1)^{\left\lfloor \frac{n}{2}\right\rfloor } \left\lfloor \frac{n+1}{2}\right\rfloor ! \, _2F_1\left(-a-\left\lfloor \frac{n}{2}\right\rfloor +\frac{1}{2},n-\left\lfloor \frac{n}{2}\right\rfloor +1;n-2 \left\lfloor \frac{n}{2}\right\rfloor +\frac{1}{2};x^2\right)}{\Gamma \left(n-2 \left\lfloor \frac{n}{2}\right\rfloor +\frac{1}{2}\right) \Gamma \left(a+\left\lfloor \frac{n}{2}\right\rfloor +\frac{1}{2}\right)}  , \quad |x|<1  \\
-\frac{2^{-n} \Gamma (n+2) | x| ^{-2 \left\lfloor \frac{n-1}{2}\right\rfloor -4} \, _2F_1\left(\left\lfloor \frac{n}{2}\right\rfloor +\frac{3}{2},\left\lfloor \frac{n-1}{2}\right\rfloor +2;\frac{2 n+3}{2}+a;\frac{1}{x^2}\right)}{\sqrt{\pi } \Gamma \left(\frac{2 n+3}{2}+a\right)}, \quad |x|>1  \end{cases}$\\
 \textbf{1***}           & $(1-x^2)_+^a P_n^{(a,a)}(x)$ & $s=-\frac{1}{2},n\geq 1$ & $\begin{cases}  \frac{(-1)^{\left\lfloor \frac{n}{2}\right\rfloor } \Gamma (a+n+1) \left\lfloor \frac{n-1}{2}\right\rfloor !\,x^{n-2 \left\lfloor \frac{n}{2}\right\rfloor } \, _2F_1\left(-a-\left\lfloor \frac{n}{2}\right\rfloor -\frac{1}{2},n-\left\lfloor \frac{n}{2}\right\rfloor ;n-2 \left\lfloor \frac{n}{2}\right\rfloor +\frac{1}{2};x^2\right)}{2\Gamma(n+1)\Gamma \left(n-2 \left\lfloor \frac{n}{2}\right\rfloor +\frac{1}{2}\right) \Gamma \left(a+\left\lfloor \frac{n}{2}\right\rfloor +\frac{3}{2}\right)}, \quad |x|<1  \\
\frac{2^{-n} \Gamma (n) \Gamma (a+n+1) x^{n-2 \left\lfloor \frac{n}{2}\right\rfloor } | x| ^{-2 \left(\left\lfloor \frac{n-1}{2}\right\rfloor +1\right)} \, _2F_1\left(\left\lfloor \frac{n-1}{2}\right\rfloor +1,\left\lfloor \frac{n}{2}\right\rfloor +\frac{1}{2};a+n+\frac{3}{2};\frac{1}{x^2}\right)}{\sqrt{\pi } n! \Gamma \left(a+n+\frac{3}{2}\right)}, \quad |x|>1  \end{cases}$\\
  \textbf{6*}           & $\E^{-x^2} H_n(x)$ & $s=\frac{1}{2}$ & $2^n\frac{(-1)^{\left\lfloor \frac{n}{2}\right\rfloor} 2^{-2 \left\lfloor \frac{n}{2}\right\rfloor +n+1} x^{n-2 \left\lfloor \frac{n}{2}\right\rfloor } \Gamma \left(\left\lfloor \frac{n+1}{2}\right\rfloor +1\right) }{\sqrt{\pi }} \, _1F_1\left(\left\lfloor \frac{n+1}{2}\right\rfloor +1; n-2 \left\lfloor \frac{n}{2}\right\rfloor +\frac{1}{2};-x^2\right)$\\
    \textbf{6**}           & $\E^{-x^2} H_n(x)$ & $s=-\frac{1}{2}, n\geq1$ & $\frac{(-1)^{\left\lfloor \frac{n}{2}\right\rfloor }2^{n-1}\left\lfloor \frac{n-1}{2}\right\rfloor !\,x^{n-2 \left\lfloor \frac{n}{2}\right\rfloor }}{\Gamma \left(n-2 \left\lfloor \frac{n}{2}\right\rfloor +\frac{1}{2}\right)}\, _1F_1\left(n-\left\lfloor \frac{n}{2}\right\rfloor ;n-2 \left\lfloor \frac{n}{2}\right\rfloor +\frac{1}{2};-x^2\right)$
\end{tabular}
\captionof{table}{A listing of notable special and limit cases obtained via Tables \ref{tab:fractionallaplacetable} and \ref{tab:fractionallaplacetable2}.}
\label{tab:fractionallaplacetabl3continued}
    \end{landscape}
    \clearpage}
\restoregeometry

\section{Fractional Laplacian and Riesz potential formulae in higher dimensions}\label{sec:fractionalexplicitformshighD}
\sectionmark{Formulae in higher dimensions}
An orthogonal basis of polynomials on the $d$-dimensional unit ball may be obtained using radially shifted Jacobi polynomials $P_n^{\left(s,\frac{d+2\ell-2}{2}\right)}(2|\mathbf{x}|^2-1)$, with $\mathbf{x} \in B_1 \subset \mathbb{R}^d$. Note that without including hyperspherical harmonics, they only form a basis for radially symmetric functions. These polynomials inherit their orthogonality with respect to the weight function $w_s(x) = (1-|\mathbf{x}|^2)_+^s$ from the orthogonality of the Jacobi polynomials on the unit interval. As discussed in \cite{dunkl_orthogonal_2014}, they form a basis on the $d$-dimensional ball when multiplied by the solid harmonic polynomials. They can be considered a $d$-dimensional generalization of the widely used Zernike polynomials which have enjoyed successful applications in the natural sciences, in particular in optics applications \cite{thibos_standards_2000, roddier_atmospheric_1990,mahajan_zernike_1994,rocha_effects_2007}. As shown in \cite{dyda2017fractional}, the above weighted polynomials have known Meijer-G function representations:
\begin{align*}
\left(1-| \mathbf{x}| ^2\right)_+^a P_n^{\left(a,\frac{d+2\ell-2}{2}\right)}&\left(2 | \mathbf{x}| ^2-1\right)\\ &= \frac{(-1)^n \Gamma (a+n+1)}{n!} G_{2,2}^{1,1}\left(| \mathbf{x}| ^2 \left\lvert
\begin{array}{c}
 1-\frac{d+2\ell}{2}-n,a+n+1 \\
 0,1-\frac{d+2\ell}{2} \\
\end{array}
\right. \right)\\
&=\frac{\Gamma (a+n+1)}{n!} G_{2,2}^{2,0}\left(| \mathbf{x}| ^2 \left\lvert
\begin{array}{c}
 a+n+1, 1-\frac{d+2\ell}{2}-n \\
 1-\frac{d+2\ell}{2}, 0 \\
\end{array}
\right. \right).
\end{align*}
As a result, one may derive an explicit form of their fractional Laplacian and Riesz potentials which was first proven in \cite{dyda2017fractional} and used in \cite{gutleb2022computation} to develop a sparse spectral method for Riesz potentials on arbitrary dimensional balls. In this paper we provide the more general result to work for generic Jacobi parameters $(a,b)$. To that end, we require the following Meijer-G function representation result:
\begin{corollary}
 $$(1-|\mathbf{x}|^2)_+^a P_n^{(a,b)}(2|\mathbf{x}|^2-1) = \frac{\Gamma (a+n+1) }{n!} G_{2,2}^{2,0}\left(|\mathbf{x}|^2 \left\lvert
\begin{array}{cc}
 a+n+1, & -n-b \\
 -b, & 0 \\
\end{array}\right.
\right).$$   
\end{corollary}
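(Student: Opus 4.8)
The plan is to obtain this corollary from Row~5 of \cref{tab:meijergreptable} by a single application of the multiplicative shift property \eqref{eq:meijerGsym2} of the Meijer-G function. Row~5 asserts
\begin{align*}
(1-x^2)_+^a (x^2)^b\, P_n^{(a,b)}(2x^2-1) = \frac{\Gamma(a+n+1)}{n!}\, G_{2,2}^{2,0}\left(x^2 \left\lvert \begin{array}{cc} a+b+n+1, & -n \\ 0, & b \end{array}\right.\right),
\end{align*}
and since every expression here — as well as both sides of the corollary — depends on its argument only through its square, it suffices to prove the scalar identity in $x$ and then substitute $x^2 = |\mathbf{x}|^2$.

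First I would multiply the Row~5 identity by $(x^2)^{-b}$. For $x \neq 0$ this cancels the $(x^2)^b$ prefactor on the left, yielding
\begin{align*}
(1-x^2)_+^a\, P_n^{(a,b)}(2x^2-1) = \frac{\Gamma(a+n+1)}{n!}\, (x^2)^{-b}\, G_{2,2}^{2,0}\left(x^2 \left\lvert \begin{array}{cc} a+b+n+1, & -n \\ 0, & b \end{array}\right.\right).
\end{align*}
Applying \eqref{eq:meijerGsym2} with $\mu = -b$ absorbs the scalar prefactor into the Meijer-G symbol, shifting every parameter by $-b$: the upper parameters become $(a+b+n+1-b,\,-n-b) = (a+n+1,\,-n-b)$ and the lower parameters become $(0-b,\,b-b) = (-b,\,0)$. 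This is exactly the Meijer-G function appearing on the right-hand side of the corollary — no further reordering of the parameter lists is needed, though it would be permitted by the permutation invariance of Meijer-G parameters — so the claimed formula holds for all $\mathbf{x} \neq \mathbf{0}$.

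The one delicate point, and the closest thing to an obstacle, is the value at $\mathbf{x} = \mathbf{0}$ when $b \neq 0$, where the auxiliary factor $(x^2)^{\pm b}$ degenerates (and Row~5 itself is not asserted when $b<0$). Here I would argue by continuity: for $|\mathbf{x}| < 1$ the left-hand side of the corollary equals $(1-|\mathbf{x}|^2)^a$ times a polynomial in $|\mathbf{x}|^2$, hence is continuous (indeed real-analytic) in a neighbourhood of the origin, while the right-hand side — a Meijer-G function of $|\mathbf{x}|^2$ that coincides with the left-hand side on the punctured neighbourhood — extends continuously to $\mathbf{0}$ with the same value. Hence the identity holds for every $\mathbf{x} \in B_1$, and more generally wherever both sides are well-defined, completing the proof.
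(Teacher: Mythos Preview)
Your proof is correct and follows exactly the same route as the paper: the corollary is obtained from Row~5 of \cref{tab:meijergreptable} by a single application of the multiplicative shift property \eqref{eq:meijerGsym2}. You have simply written out the details (and the continuity argument at the origin) more explicitly than the paper's one-line proof.
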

\begin{proof}
This is an immediate consequence of Row 5 of \cref{tab:meijergreptable}, obtained by using the multiplicative shift formula for Meijer-G functions in \cref{eq:meijerGsym2}.
\end{proof}
A similar construction works for $d$-dimensional generalized Laguerre polynomials $L^{\alpha}_n(|x|^2)$ to obtain results on arbitrary dimensional whole space as well as certain functions on the complement of the unit ball.
\begin{theorem}\label{thm:higherdimmeijerglaplacian}
The functions $(-\Delta)^s f(x)$ in column 4 of \cref{tab:higherdimensional} are explicit forms for the fractional Laplacian in arbitrary dimensions $d \geq 1$, if $s>0$, and the Riesz potential, if $s\in(-\frac{d}{2},0)$, of the functions $f(x)$ in column 3. The $V_\ell(x)$ in \cref{tab:higherdimensional} is any $d$-dimensional solid harmonic polynomial of degree $\ell \geq 0$. Sufficient parameter conditions for the stated applications of \cref{thm:meijergDKKtheorem1,thm:meijergDKKtheorem2} are
\begin{align*}
\text{Row A:}&\quad a>-1,\qquad b<\frac{d+\ell}{2},\\
\text{Row B:}&\quad \operatorname{Re}(\alpha)>\max\left(\frac{\ell}{2}-s-n-1,-n-1\right),\\
\text{Row C:}&\quad -1<a<s-n-\frac{\ell}{2},\qquad b>-n-1+\frac{\ell}{2}-s,\\
\text{Row D:}&\quad a>-1,\qquad \max(a,a+b)<s-\frac{\ell}{2}.
\end{align*}
The formulas hold for $\mathbf{x}\neq\mathbf{0}$ and, in Rows A, C and D, with $|\mathbf{x}|\neq1$ unless $a>2s$. Values at $\mathbf{x}=\mathbf{0}$ require a separate continuity argument. \cref{tab:higherdimensionalhypergeom} contains the corresponding hypergeometric forms for $(-\Delta)^s f(x)$ as well as some additional special case results. Removable singularities in those representations are interpreted by limits.
\begin{proof}
The hypergeometric forms and special cases are obtained by hypergeometric reduction formulas like the ones listed in \cref{sec:app:reduction} -- we omit the individual proofs for them as they are analogous to the procedure in \cref{thm:fractionallaplacianformstheorem1D}. Furthermore, since the hypergeometric forms of the two functions on the complement of the unit ball are excessively long we do not include them here.
\begin{enumerate}[start=1,label={(\bfseries Row \Alph*):}]
\item Via \cref{tab:meijergreptable}, set $\mathbf{a} = (a+n+1,-b-n)$ and $\mathbf{b} = (-b,0)$ in \cref{thm:meijergDKKtheorem1,thm:meijergDKKtheorem2} with $(m,n,p,q) = (2,0,2,2)$. A closely related formula was reproduced in \cite{dyda2017eigenvalues} using the same proof approach but only for the special case $b = \frac{d+2\ell-2}{2}$.
\item Via \cref{tab:meijergreptable}, set $\mathbf{a} = (-n-\alpha)$ and $\mathbf{b} = (0,-\alpha)$ in \cref{thm:meijergDKKtheorem1,thm:meijergDKKtheorem2} with $(m,n,p,q) = (1,1,1,2)$.
\item Due to the somewhat more `exotic' nature of the functions in rows $\mathbf{C}$ and $\mathbf{D}$ in one dimension, we did not include their Meijer-G forms in the discussion in \cref{sec:meijergforms}. However, in higher dimensional applications, complements of the unit ball are of interest so we include them here. Explicit fractional Laplacian forms of Meijer-G functions on complements of unit balls were previously discussed in \cite{dyda2017fractional} but these particularly nice Jacobi polynomial special cases were not specifically described. By \cite[8.4.36.3]{prudnikovVol3}, cf.~\cite[05.06.26.0012.01]{WolframFunctions2022}, we have:
\begin{align*}
 (|\mathbf{x}|^2-1)_+^a P^{(a,b)}_n(2|\mathbf{x}|^2-1) = \frac{\Gamma (a+n+1)}{\Gamma (n+1)} G_{2,2}^{0,2}\left(|\mathbf{x}|^2\left|
\begin{array}{c}
 -b-n,a+n+1 \\
 0,-b \\
\end{array}
\right.\right).
\end{align*}
The stated result then follows by application of Theorems \ref{thm:meijergDKKtheorem1} and \ref{thm:meijergDKKtheorem2}.
\item See the discussion of row $\mathbf{C}$ above. By \cite[8.4.36.2]{prudnikovVol3} we have:
\begin{align*}
(|\mathbf{x}|^2-1)_+^a P^{(a,b)}_n\left(\frac{2}{|\mathbf{x}|^2}-1\right) = \frac{\Gamma (a+n+1)}{\Gamma (n+1)} G_{2,2}^{0,2}\left(|\mathbf{x}|^2\left|
\begin{array}{c}
 a+1,a+b+1 \\
 -n,a+b+n+1 \\
\end{array}
\right.\right).
\end{align*}
The stated result then follows by application of Theorems \ref{thm:meijergDKKtheorem1} and \ref{thm:meijergDKKtheorem2}.
\end{enumerate}
\end{proof}
\end{theorem}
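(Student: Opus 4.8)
The four rows of \cref{tab:higherdimensional} can all be handled by the same pipeline, following the pattern of the proof of \cref{thm:meijergformstheorem}: first write $f(\mathbf{x})$ in the exact product form $V_\ell(\mathbf{x})\,G^{m,n}_{p,q}(|\mathbf{x}|^2)$ with the specified upper and lower parameters demanded by \cref{thm:meijergDKKtheorem1,thm:meijergDKKtheorem2}; then verify the parameter hypotheses of those theorems; then read off the transformed $G^{m+1,n+1}_{p+2,q+2}$; and finally simplify it, whenever output parameters collide, via the cancellation-reduction \eqref{eq:meijerGfullcancellation}, argument-inversion \eqref{eq:meijerGsym1} and multiplicative-shift \eqref{eq:meijerGsym2} identities.

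For the Meijer-G inputs the plan is as follows. Row A is handled by the Corollary immediately preceding the theorem, which writes $(1-|\mathbf{x}|^2)_+^a P_n^{(a,b)}(2|\mathbf{x}|^2-1)$ as a constant multiple of $G^{2,0}_{2,2}(|\mathbf{x}|^2)$ with upper parameters $a+n+1,-n-b$ and lower parameters $-b,0$, so that $(m,n,p,q)=(2,0,2,2)$; multiplying through by $V_\ell(\mathbf{x})$ then lands in the required shape. Row B uses Row 7 of \cref{tab:meijergreptable}, giving $\E^{-x^2}L^\alpha_n(x^2)$ as a constant times $G^{1,1}_{1,2}(x^2)$ with upper parameter $-n-\alpha$ and lower parameters $0,-\alpha$, so $(m,n,p,q)=(1,1,1,2)$. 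Rows C and D rely on the exterior-domain Jacobi representations \cite[8.4.36.3]{prudnikovVol3} and \cite[8.4.36.2]{prudnikovVol3} quoted in the proof, which produce $G^{0,2}_{2,2}$ and $G^{2,0}_{2,2}$ forms respectively, again multiplied by $V_\ell(\mathbf{x})$.

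The hypothesis check is where the parameter restrictions listed in the table originate. Row B has $p+q=3<4=2m+2n$, so part (i) of each theorem applies as soon as $1-\max_j\mathrm{Re}(a_j)=1+n+\alpha$ and $\min_j\mathrm{Re}(b_j)=\min(0,-\alpha)$ lie in the ranges prescribed there. Rows A, C and D all satisfy $p+q=2m+2n=4$ with $p=q=2$, so one works in part (ii): compute $\bar{\lambda}$ and $\underline{\lambda}$ from the branch definitions in \cref{thm:meijergDKKtheorem1}, check the inequalities on $\bar{\lambda}$ and $\underline{\lambda}$ required by \cref{thm:meijergDKKtheorem1,thm:meijergDKKtheorem2}, and note that the extra $p=q$ proviso is automatic because each assertion is made for $|\mathbf{x}|\neq 1$ — the interior $B_1$ for Row A, the open exterior $\mathbb{R}^d\setminus\overline{B_1}$ for Rows C and D. Applying \cref{thm:meijergDKKtheorem1} for $s>0$, or \cref{thm:meijergDKKtheorem2} (with $-s$ substituted for its $s$) for $s\in(-d/2,0)$, then sends the upper/lower parameters to $1-s-\tfrac{d+2\ell}{2},\,\mathbf{a}-s,\,-s$ over $0,\,\mathbf{b}-s,\,1-\tfrac{d+2\ell}{2}$ (respectively with $+s$), up to the prefactor $4^{\pm s}V_\ell(\mathbf{x})$; matching against column 5 of \cref{tab:higherdimensional} is then mechanical, and where parameters coincide one collapses the order using \eqref{eq:meijerGfullcancellation}. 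The hypergeometric forms and special cases of \cref{tab:higherdimensionalhypergeom} follow from the reduction formulae of \cref{sec:app:reduction} exactly as in \cref{thm:meijergformstheorem}, so I omit those computations.

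The step I expect to be most delicate is the hypothesis verification for the exterior-domain Rows C and D: since $m=0$ or $n=0$ there, the generic bounds written with $\max_{1\le j\le n}$ or $\min_{1\le j\le m}$ degenerate, forcing one to argue entirely through the $\bar{\lambda},\underline{\lambda}$ branch of part (ii) and to check carefully that the $p=q$ side condition holds on the whole exterior and not merely off the sphere $|\mathbf{x}|=1$. A secondary point worth making explicit is that at integer or half-integer values of $a,b,\alpha,n$ certain upper and lower parameters of the transformed $G^{m+1,n+1}_{p+2,q+2}$ collide, so \eqref{eq:meijerGfullcancellation} reduces it to a lower-order Meijer-G; this is precisely the mechanism producing the clean closed forms for $|\mathbf{x}|<1$ in the Jacobi rows, and should be spelled out when stating those special cases.
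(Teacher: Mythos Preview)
Your proposal is correct and follows essentially the same route as the paper's proof: identify the Meijer-G representation of each input (via the preceding Corollary for Row A, Row 7 of \cref{tab:meijergreptable} for Row B, and the Prudnikov formulae \cite[8.4.36.2--3]{prudnikovVol3} for Rows C and D), apply \cref{thm:meijergDKKtheorem1,thm:meijergDKKtheorem2}, and reduce via the cancellation and reduction identities. Your added discussion of the part (i)/(ii) split and the degenerate $m=0$ or $n=0$ cases in Rows C and D is a welcome elaboration that the paper leaves implicit; note, however, that the ``Domain'' column in \cref{tab:higherdimensional} records the support of $f$, not the region of validity of the output formula, so the $|\mathbf{x}|\neq 1$ proviso from the $p=q$ clause should be read as a pointwise restriction on where the Meijer-G identity is asserted rather than as a consequence of the support.
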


    \clearpage
    \thispagestyle{empty}
    {\newgeometry{left=1cm,right=1cm,bottom=1cm,top=1cm}
    \begin{landscape}
        \centering 
        \begin{tabular}{c c c c}
 \textbf{Row ID}& \textbf{Domain}&  $\mathbf{f(x)}$ &      $\mathbf{(-\Delta)^s f(x)}$ (Meijer-G form)   \\ \hline \hline
\textbf{A} &  $B_1 \subset \mathbb{R}^d$  &$V_\ell(\mathbf{x}) \left(1-| \mathbf{x}| ^2\right)_+^a P_n^{\left(a,b\right)}\left(2 | \mathbf{x}| ^2-1\right)$ &$V_\ell(\mathbf{x}) \frac{4^s \Gamma (a+n+1)}{n!} G_{3,3}^{2,1}\left(| \mathbf{x}| ^2 \left|
\begin{array}{c}
 1-s-\frac{d+2\ell}{2},-b-n-s,a+n-s+1 \\
 0,-b-s,1-\frac{d+2\ell}{2} \\
\end{array} \right.
\right)$    \\   
\textbf{B} &  $\mathbb{R}^d$   &$V_\ell(\mathbf{x}) \E^{-|\mathbf{x}|^2} L_n^\alpha(|\mathbf{x}|^2)$ & $V_\ell(\mathbf{x})\frac{4^s}{n!} G_{2,3}^{1,2}\left(\left| \mathbf{x}\right|^2 \left|
\begin{array}{c}
 1-s-\frac{d+2 \ell}{2},-n-s-\alpha \\
 0,-s-\alpha ,1-\frac{d+2 \ell}{2} \\
\end{array}
\right.\right)$ \\
\textbf{C} &  $\mathbb{R}^d
\setminus B_1$   &$V_\ell(\mathbf{x}) (|\mathbf{x}|^2-1)_+^a P^{(a,b)}_n(2|\mathbf{x}|^2-1)$ & $V_\ell(\mathbf{x})\frac{4^s \Gamma (a+n+1)}{\Gamma (n+1)} G_{4,4}^{1,3}\left(|\mathbf{x}|^2\left|
\begin{array}{c}
 1-s-\frac{d+2\ell}{2},-b-n-s,a+n-s+1,-s \\
 0,1-\frac{d+2\ell}{2},-b-s,-s \\
\end{array}
\right.\right)$ \\
\textbf{D} &  $\mathbb{R}^d
\setminus B_1$   &$ V_\ell(\mathbf{x})(|\mathbf{x}|^2-1)_+^a P^{(a,b)}_n(\frac{2}{|\mathbf{x}|^2}-1)$ & $V_\ell(\mathbf{x}) \frac{4^s \Gamma (a+n+1)}{\Gamma (n+1)} G_{4,4}^{1,3}\left(|\mathbf{x}|^2\left|
\begin{array}{c}
 1-s-\frac{d+2 \ell}{2},a-s+1,a+b-s+1,-s \\
 0,-n-s,a+b+n-s+1,1-\frac{d+2 \ell}{2} \\
\end{array}
\right.\right)$ \\
\end{tabular}
\captionof{table}{Explicit Meijer-G function forms of order $s$ fractional Laplacians and Riesz potentials of functions depending on $\mathbf{x} \in \mathbb{R}^d$, subject to the conditions in the preceding theorem. Corresponding hypergeometric forms, including some special cases, are presented in Table \ref{tab:higherdimensionalhypergeom}.}
\label{tab:higherdimensional}
\vspace{15mm}
\begin{tabular}{c c c c}
 \textbf{Row ID}& \textbf{Domain}&  $\mathbf{f(x)}$ &      $\mathbf{(-\Delta)^s f(x)}$ (Hypergeometric form)   \\ \hline \hline
 \textbf{A} &  $B_1 \subset \mathbb{R}^d$   & $V_\ell(\mathbf{x}) \left(1-| \mathbf{x}| ^2\right)_+^a P_n^{\left(a,b\right)}\left(2 | \mathbf{x}| ^2-1\right)$ &$V_\ell(\mathbf{x})\frac{4^s \Gamma (a+n+1)}{\Gamma (n+1)}\begin{cases}\frac{\Gamma (-b-s) \Gamma \left(\frac{d}{2}+s+\ell \right) \, _3F_2\left(-a-n+s,b+n+s+1,\frac{d}{2}+s+\ell ;b+s+1,\frac{d}{2}+\ell ;|\mathbf{x}|^2\right)}{\Gamma \left(\frac{d}{2}+\ell \right) \Gamma (a+n-s+1) \Gamma (-b-n-s)}, \quad |\mathbf{x}| <1 \\ \frac{\Gamma \left(-b+\frac{d}{2}+\ell \right) \Gamma \left(\frac{d}{2}+s+\ell \right) | x| ^{-d-2 (s+\ell )} \, _3F_2\left(s+1,-b+\frac{d}{2}+\ell ,\frac{d}{2}+s+\ell ;-b+\frac{d}{2}-n+\ell ,a+\frac{d}{2}+n+\ell +1;\frac{1}{|\mathbf{x}|^2}\right)}{\Gamma (-s) \Gamma \left(a+\frac{d}{2}+n+\ell +1\right) \Gamma \left(-b+\frac{d}{2}-n+\ell \right)}, \quad |\mathbf{x}| >1 \end{cases}$  \\
\textbf{A*} &  $B_1 \subset \mathbb{R}^d$   & $V_\ell(\mathbf{x}) \left(1-| \mathbf{x}| ^2\right)_+^a P_n^{\left(a,\frac{d+2\ell-2}{2}\right)}\left(2 | \mathbf{x}| ^2-1\right)$ &$V_\ell(\mathbf{x})(-1)^n \frac{4^s \Gamma(1 + a + n)}{\Gamma(n+1)} \begin{cases} \frac{\Gamma \left(\frac{d}{2}+\ell+n+s\right) \, _2F_1\left(-a-n+s,\frac{d}{2}+\ell+n+s;\frac{d}{2}+\ell;|\mathbf{x}|^2\right)}{\Gamma \left(\frac{d}{2}+\ell\right) \Gamma (a+n-s+1)} ,\quad |\mathbf{x}|<1 \\ \frac{\Gamma \left(\frac{d}{2}+\ell+n+s\right) | x| ^{-d-2 \ell-2 n-2 s} \, _2F_1\left(n+s+1,\frac{d}{2}+\ell+n+s;a+\frac{d}{2}+\ell+2 n+1;\frac{1}{| \mathbf{x}| ^2}\right)}{\Gamma (-n-s) \Gamma \left(a+\frac{d}{2}+\ell+2 n+1\right)}, \quad  |\mathbf{x}|>1\end{cases} $  \\
\textbf{A**} &  $B_1 \subset \mathbb{R}^d$   & $V_\ell(\mathbf{x}) \left(1-| \mathbf{x}| ^2\right)_+^s P_n^{\left(s,\frac{d+2\ell-2}{2}\right)}\left(2 | \mathbf{x}| ^2-1\right)$ &$V_\ell(\mathbf{x}) \frac{4^s \Gamma(1 + s + n)}{\Gamma(n+1)} \begin{cases} \frac{\Gamma \left(\frac{d}{2}+\ell+n+s\right) P_n^{\left(s,\frac{d+2\ell-2}{2}\right)}\left(2 | \mathbf{x}| ^2-1\right)}{\Gamma \left(\frac{d}{2}+n+\ell \right)} ,\quad |\mathbf{x}|<1 \\ \frac{ (-1)^n \Gamma \left(\frac{d}{2}+\ell+n+s\right) | x| ^{-d-2 \ell-2 n-2 s} \, _2F_1\left(n+s+1,\frac{d}{2}+\ell+n+s;s+\frac{d}{2}+\ell+2 n+1;\frac{1}{| \mathbf{x}| ^2}\right)}{\Gamma (-n-s) \Gamma \left(s+\frac{d}{2}+\ell+2 n+1\right)}, \quad  |\mathbf{x}|>1\end{cases} $  \\
\textbf{A***} &  $B_1 \subset \mathbb{R}^d$     &$V_\ell(\mathbf{x})\mathbbm{1}_{B_1}$ &$V_\ell(\mathbf{x}) 4^s \begin{cases} \frac{\Gamma \left(\frac{d+2\ell}{2}+s\right) \, _2F_1\left(s,\frac{d+2\ell}{2}+s;\frac{d+2\ell}{2};| \mathbf{x}| ^2\right)}{\Gamma \left(\frac{d+2\ell}{2}\right) \Gamma (1-s)}, \quad |\mathbf{x}|<1 \\ \frac{\Gamma \left(\frac{d+2\ell}{2}+s\right) | x| ^{-d-2 s-2\ell} \, _2F_1\left(s+1,\frac{d+2\ell}{2}+s;\frac{d+2\ell+2}{2};\frac{1}{| \mathbf{x}| ^2}\right)}{\Gamma \left(\frac{d+2\ell}{2}+1\right) \Gamma (-s)}, \quad  |\mathbf{x}|>1\end{cases} $\\
\textbf{B} &  $\mathbb{R}^d$   &$V_\ell(\mathbf{x}) \E^{-|\mathbf{x}|^2} L_n^\alpha(|\mathbf{x}|^2)$ & $V_\ell(\mathbf{x})\frac{4^s  \Gamma \left(\frac{d}{2}+s+\ell \right) \Gamma (n+s+\alpha +1) \, _2F_2\left(\frac{d}{2}+s+\ell ,n+s+\alpha +1;\frac{d}{2}+\ell ,s+\alpha +1;-|\mathbf{x}|^2\right)}{\Gamma (n+1) \Gamma \left(\frac{d}{2}+\ell \right) \Gamma (s+\alpha +1)}$

\end{tabular}
\captionof{table}{Explicit hypergeometric function forms of order $s$ fractional Laplacians and Riesz potentials of functions depending on $\mathbf{x} \in \mathbb{R}^d$, subject to the conditions and joint-limit convention stated above. Formulas with row IDs containing *s are special cases of the corresponding row ID.}
\label{tab:higherdimensionalhypergeom}
    \end{landscape}}
    \clearpage
\restoregeometry

\section{A numerical method for solving $(-\Delta)^su=f$}\label{sec:numerical}

Utilizing the explicit expressions derived in this work, one may develop efficient spectral methods for solving fractional PDEs. For instance, consider the problem, find $u \in H^{1/3}(\mathbb{R}^2)$ that satisfies
\begin{align}
(-\Delta)^{1/3} u(x,y) = f(x,y) \coloneqq
\begin{cases}
20(1-r^2)^{-1/3}(x^3-3xy^2)\exp(-r^2) & \text{if} \; r^2 < 1,\\
0 & \text{otherwise},
\end{cases}
\label{eq:fpde}
\end{align}
where $r^2 = x^2+y^2$ and $H^s(\mathbb{R}^2)$, $s \in (0,1)$, denotes a fractional Sobolev space \cite{Di2012}. The right-hand side can be well approximated by weighted Zernike polynomials and the solution $u$ can be expanded in their Riesz potential counterparts. 

Generalized Zernike polynomials are orthogonal polynomials, $Z^{(b)}_{n,\ell,j}(x,y)$, supported on the unit disk orthogonal to the weight $(1-r^2)^b$. The subscript $n$ denotes the polynomial degree and $(\ell,j)$ are the Fourier mode and sign, respectively. Moreover, given an even and odd $n$ then $\ell  \in \{0,2,\dots,n\}$ and  $\ell  \in \{1,3,\dots,n\}$, respectively. Finally $j \in \{0,1\}$, unless $\ell =0$ in which case $j=1$; thus $j=0$ and $j=1$ select the sine and cosine modes, respectively, consistently with \cite{papadopoulos2023frame}. The generalized Zernike polynomials may be defined via the Jacobi polynomials as follows:
\begin{align}
Z^{(b)}_{n,\ell,j}(x,y) \coloneqq  r^\ell \sin(\ell \theta + j \pi/2) P_{(n-\ell)/2}^{(b,\ell)}(2r^2-1).
\end{align} 
Note that $r^\ell \sin(\ell \theta + j \pi/2)$ is a solid harmonic polynomial. Therefore, by setting $d=2$ and $b=s=-1/3$ in row \textbf{A**} of \cref{tab:higherdimensionalhypergeom}, we have explicit expressions for  $(-\Delta)^{-1/3} [(1-r^2)^{-1/3} Z^{(-1/3)}_{n,\ell,j}]$, denoted by $\tilde{Z}^{(s)}_{n,\ell,j}(x,y)$, which are functions supported on $\mathbb{R}^2$. 

The generic sufficient condition for Row \textbf{A} does not cover the $\ell=3$ mode in \cref{eq:fpde}; however, the specialized $G_{2,2}^{1,1}$ representation preceding \cref{thm:higherdimmeijerglaplacian} gives a direct proof. Writing $t=1/3$ for the Riesz order and $k\geq0$ for the radial Jacobi degree, the parameters for $d=2$, $\ell=3$ and $a=-1/3$ satisfy Condition S and the equal-index case of \cref{thm:meijergDKKtheorem2}, with $\bar\lambda=4+k$, $\underset{\bar{}}{\lambda}=0$ and $\nu_G=2/3$. In particular, $4+k>(\ell+2t)/2=11/6$, $0<(d+\ell)/2=5/2$, and $\nu_G>1-2t=1/3$. Thus the Row \textbf{A**} formula applies to every radial degree in the $\ell=3$ expansion used here.

More generally, the parameter restrictions stated above are sufficient conditions for the particular Meijer-G representations and proof paths used in this paper, and are not claimed to be sharp. As this example illustrates, alternative Meijer-G representations can enlarge the directly proven range. Moreover, wherever the two sides define the same analytic family in a parameter, cancellations and analytic continuation can extend the corresponding identity beyond these sufficient ranges. Establishing the maximal parameter ranges of such extensions is beyond the scope of this paper.

Note that a fast analysis (expansion) transform exists for weighted generalised Zernike polynomials \cite{Slevinsky2019, FastTransforms, olver2020fast}. Thus an $\mathcal{O}(N^2 \log N)$ complexity algorithm (where $N$ is the polynomial truncation degree) for solving \cref{eq:fpde} proceeds as follows:
\begin{enumerate}
\itemsep=0pt
\item Expand the right-hand side of \cref{eq:fpde}, $f(x,y) =  \sum\limits_{n,\ell, j} f_{n,\ell,j} (1-r^2)^{-1/3} Z^{(-1/3)}_{n,\ell,j}(x,y)$, to find the coefficients $f_{n,\ell,j}$. 
\item $u(x,y) = \sum\limits_{n,\ell, j} f_{n,\ell,j} \tilde{Z}_{n,\ell, j}(x,y)$.
\end{enumerate}
We provide a plot of the right-hand side and the solution in \cref{fig:fpde}. The code for generating the plots is available in the Julia package \texttt{FractionalFrames.jl} \cite{fractionalframes.jl} and archived on Zenodo \cite{fractionalframes-zenodo}. Although this methodology is quite sensitive to the choice of the right-hand side, techniques for adapting these explicit expressions for more general equations and data, by utilizing frame techniques \cite{Adcock2019, Adcock2020}, may be found in \cite{papadopoulos2023frame}.
\begin{figure}[h!]
\centering
\includegraphics[width =0.49 \textwidth]{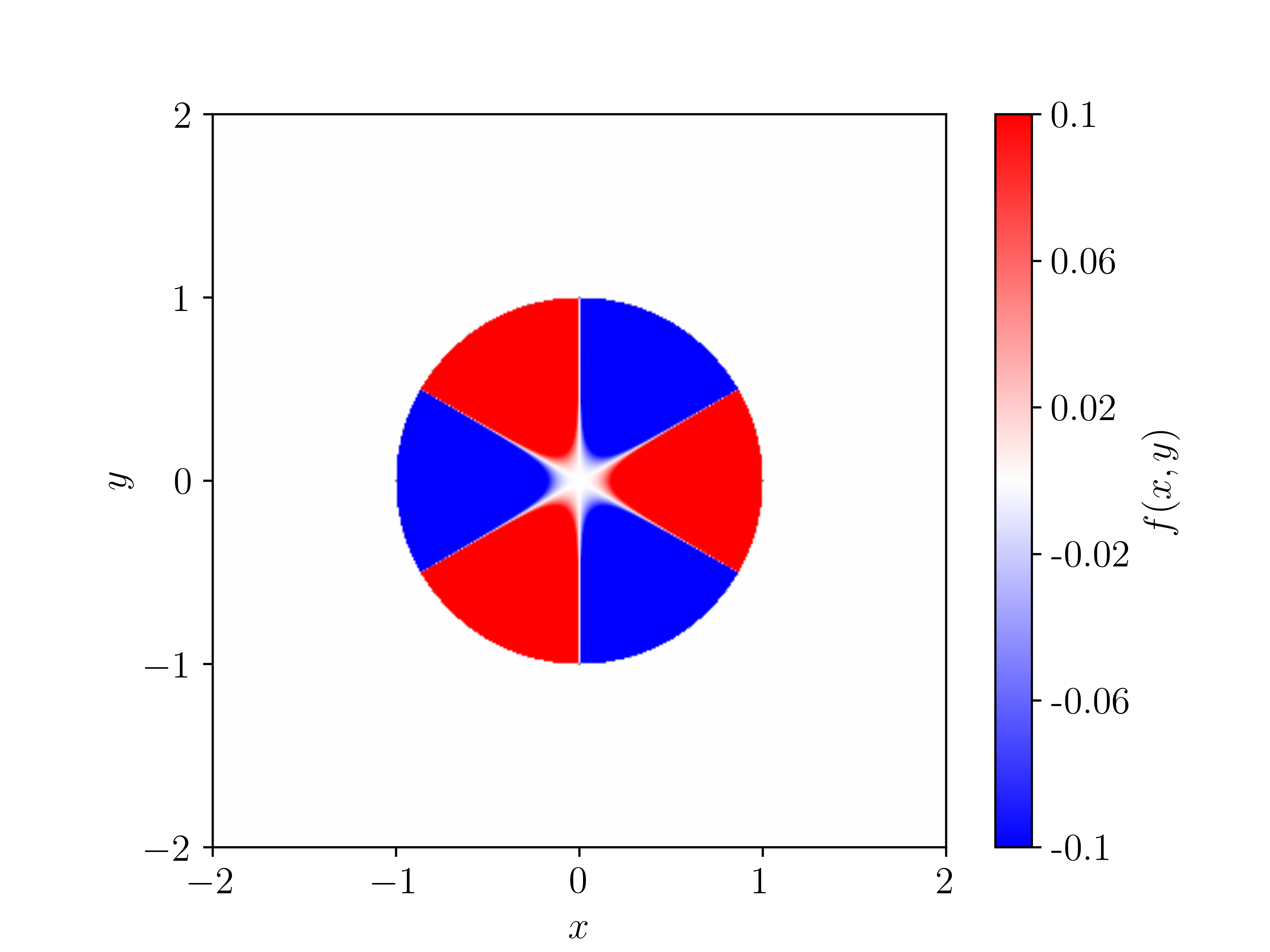}
\includegraphics[width =0.49 \textwidth]{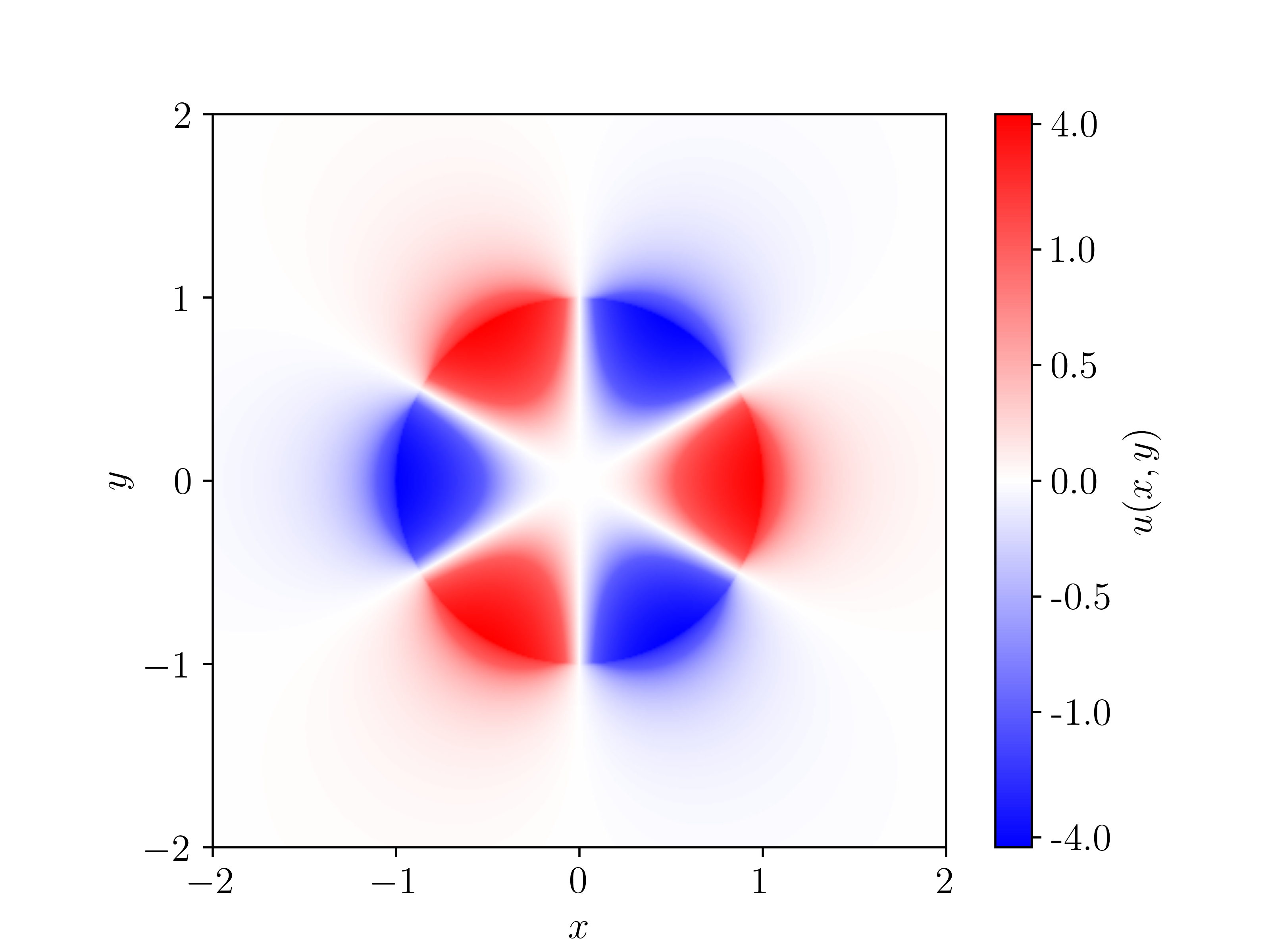}
\caption{The right-hand side $f(x,y)$ (left) and the solution $u(x,y)$ (right) of \cref{eq:fpde}. The colour bar is scaled in a logarithmic manner. The right-hand side is supported on the unit disk with a blow-up as $r \to 1$, whereas the solution is supported on $\mathbb{R}^2$.}\label{fig:fpde}
\end{figure}

\section{Conclusions}
In this paper we proved and collected numerous explicit fractional Laplacian and Riesz potential formulae for classical functions in one and higher dimensions. We expressed them both in generic Meijer-G form as well as in the more computationally approachable hypergeometric functions, based on previous results in \cite{dyda2017fractional}. By means of a simple numerical example, we have shown how these results may be used as efficient and accurate basis functions in numerical methods for equations involving the fractional Laplacian. In \cite{papadopoulos2023frame}, we have used these explicit expressions in a generically applicable whole space frame-based numerical method for time-dependent fractional PDEs involving the fractional Laplacian. Even outside of the context of spectral methods, these non-trivial expressions may be used to construct effective toy models for testing new numerical methods. Explicit expressions for Riesz potentials are also used in proofs of notable analytic results, e.g.~equilibrium measures \cite{carrillo2023radial,Jcarrilloradial}.

\section*{Acknowledgments}
This work was completed with the support of the EPSRC grant EP/T022132/1 ``Spectral element methods for fractional differential equations, with applications in applied analysis and medical imaging" and the Leverhulme Trust Research Project Grant RPG-2019-144 ``Constructive approximation theory on and inside algebraic curves and surfaces". IPAP was also supported by the Deutsche Forschungsgemeinschaft (DFG, German Research Foundation) under Germany's Excellence Strategy -- The Berlin Mathematics Research Center MATH+ (EXC-2046/1, project ID: 390685689).

TSG would like to thank Jos\'e A.~Carrillo and David Gomez-Castro for fruitful discussions on Riesz potentials, as well as Ann Brew, Elizabeth Killeen, and Myrjam M\"uhlbacher for much appreciated help in tracking down an obscure historical reference.

\newpage
\appendix
\appendixpage

\section{Proofs of the Meijer-G function forms of classical functions}
\sectionmark{Proofs of Meijer-G forms}
\label{sec:app:proofs}

\subsection*{Meijer-G representation of weighted Jacobi polynomials}
To our knowledge, the most general Meijer-G representation for weighted Jacobi polynomials, for $z \notin (-1,0)$, is \cite[8.4.36.1]{prudnikovVol3} \cite[05.06.26.0011.01]{WolframFunctions2022},
\begin{align}\label{eq:incompletecanonjacobimeijerG}
(1-z)_+^a P_n^{(a,b)}(2z-1) = \frac{\Gamma (a+n+1)}{n!} G_{2,2}^{2,0}\left(z\left|
\begin{array}{cc}
 a+n+1, & -b-n \\
 0, & -b \\
\end{array}
\right.\right).
\end{align}
We use this formula to obtain the following two results.

\begin{lemma}\label{lem:equaljacobimeijergseries}
The following weighted same-parameter Jacobi polynomials with $a>-1$ are a product of solid harmonic polynomials and Meijer-G functions:
\begin{align*}
(1-x^2)_+^a P_n^{(a,a)}(x) = \frac{\Gamma (a+n+1)}{n!} x^{n-2 \left\lfloor \frac{n}{2}\right\rfloor } G_{2,2}^{2,0}\left(x^2 \left|
\begin{array}{c}
 a+\left\lfloor \frac{n}{2}\right\rfloor +1,-n+\left\lfloor \frac{n}{2}\right\rfloor +\frac{1}{2} \\
 0,-n+2 \left\lfloor \frac{n}{2}\right\rfloor +\frac{1}{2} \\
\end{array}\right.
\right).
\end{align*}
\end{lemma}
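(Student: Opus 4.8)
The plan is to reduce the same-parameter Jacobi polynomial $P_n^{(a,a)}(x)$ to a radially shifted Jacobi polynomial of the form $P_{\lfloor n/2\rfloor}^{(a,\pm\frac12)}(2x^2-1)$ and then invoke the known Meijer-G representation \eqref{eq:incompletecanonjacobimeijerG} (equivalently \cref{lem:radialjacobimeijergseries}). First I would write $n=2m+r$ with $m=\lfloor n/2\rfloor$ and $r = n-2\lfloor n/2\rfloor\in\{0,1\}$, and recall the Gegenbauer normalization $P_n^{(a,a)}(x)=\frac{(a+1)_n}{(2a+1)_n}C_n^{(a+\frac12)}(x)$ recorded in \cref{sec:classicalintro}.

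Next I would use the classical even/odd quadratic reductions of Gegenbauer polynomials to Jacobi polynomials in the variable $2x^2-1$ (which follow from the explicit ${}_2F_1$ forms of $C_n^{(\lambda)}$ and of $P_m^{(a,b)}(2x^2-1)$; see also \cite[\S18.7]{nist_2018}), namely
\begin{align*}
C_{2m}^{(\lambda)}(x) &= \frac{(\lambda)_m}{(\tfrac12)_m}\,P_m^{(\lambda-\frac12,-\frac12)}(2x^2-1),\\
C_{2m+1}^{(\lambda)}(x) &= \frac{(\lambda)_{m+1}}{(\tfrac12)_{m+1}}\,x\,P_m^{(\lambda-\frac12,\frac12)}(2x^2-1),
\end{align*}
taken at $\lambda = a+\tfrac12$. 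Combining these with the previous display yields, uniformly in both parities, an identity of the shape $P_n^{(a,a)}(x) = c_n\,x^{\,r}\,P_m^{(a,\,r-\frac12)}(2x^2-1)$, where $c_n$ is a ratio of Pochhammer symbols. Applying the Legendre duplication formula to $(2a+1)_n$ and simplifying $(\tfrac12)_m$ respectively $(\tfrac12)_{m+1}$ against the factorials, $c_n$ collapses in both cases to the single expression $c_n = \dfrac{\Gamma(a+n+1)\,m!}{\Gamma(a+m+1)\,n!}$.

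Finally I would multiply through by $(1-x^2)_+^a$ and insert \eqref{eq:incompletecanonjacobimeijerG} with $z=x^2$ and second Jacobi parameter $b=r-\tfrac12$: the prefactor $\Gamma(a+m+1)/m!$ produced there cancels part of $c_n$, leaving exactly $\Gamma(a+n+1)/n!$, while $x^{\,r}=x^{\,n-2\lfloor n/2\rfloor}$ supplies the solid-harmonic factor. What remains is to check that the Meijer-G parameters $\{a+m+1,\,-b-m\}$ over $\{0,\,-b\}$ coincide, after substituting $n=2m+r$ and $b=r-\tfrac12$, with the claimed $\{a+\lfloor\tfrac n2\rfloor+1,\,-n+\lfloor\tfrac n2\rfloor+\tfrac12\}$ over $\{0,\,-n+2\lfloor\tfrac n2\rfloor+\tfrac12\}$; this is a one-line identity verification. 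I expect the only genuine bookkeeping obstacle to be the duplication-formula manipulation establishing that $c_n$ is parity-independent; degenerate parameter values (very small $n$, or $a$ at the boundary value $-1$) can be handled by continuity in $a$ or by direct inspection.
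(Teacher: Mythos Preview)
Your proposal is correct and follows essentially the same approach as the paper: reduce $P_n^{(a,a)}(x)$ to $x^{\,r}P_m^{(a,\,r-\frac12)}(2x^2-1)$ via the quadratic transformation and then apply the known Meijer-G representation \eqref{eq:incompletecanonjacobimeijerG}. The paper cites the Jacobi form of this reduction directly (\cite[18.7.13--14]{nist_2018}) rather than passing through the Gegenbauer normalization you use, but it explicitly notes the Gegenbauer variant (\cite[18.7.15--16]{nist_2018}) as an equivalent route, so the two arguments are the same in substance.
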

\begin{proof}
The proof of this is based on the following lesser known property of same-parameter Jacobi polynomials \cite[18.7.13, 18.7.14]{nist_2018}:
\begin{align*}
P^{(a,a)}_n(x) = \frac{P^{(a,a)}_{n}%
\left(1\right)}{P^{(%
a,-\frac{1}{2}+n-2\left\lfloor \frac{n}{2} \right\rfloor)}_{\left\lfloor \frac{n}{2} \right\rfloor}\left(1\right)}x^{n-2\left\lfloor \frac{n}{2}\right\rfloor}P^{(a,-\frac{1}{2}+n-2\left\lfloor \frac{n}{2} \right\rfloor)}_{\left\lfloor \frac{n}{2} \right\rfloor}\left(2x^{2}-1\right).
\end{align*}
See also \cite[18.7.15, 18.7.16]{nist_2018} for a version with Gegenbauer normalization. Picking $b = -1/2+n-2\lfloor n / 2 \rfloor$ in \cref{eq:incompletecanonjacobimeijerG} and noting that
\begin{align}
\frac{\Gamma(a+\lfloor\frac{n}{2}\rfloor + 1)}{\lfloor \frac{n}{2}\rfloor!} \frac{P_n^{(a,a)}(1)}{P_{\lfloor \frac{n}{2} \rfloor}^{(a, -\frac{1}{2}+n-2\lfloor \frac{n}{2} \rfloor)}(1)} = \frac{\Gamma(a+n+1)}{n!}
\end{align}
yields the stated form. Note that $x^{n-2\left\lfloor \frac{n}{2}\right\rfloor}$ is either $1$ or $x$ depending on whether $n \in \mathbb{N}_0$ is even or odd and thus $x^{n-2\left\lfloor \frac{n}{2}\right\rfloor}$ is a solid harmonic polynomial.
\end{proof}

\begin{lemma}\label{lem:radialjacobimeijergseries}
The following weighted and radially shifted Jacobi polynomials with $a>-1$ and $b>-1/2$ are Meijer-G functions:
\begin{align*}
(1-x^2)_+^a(x^2)^b P_n^{(a,b)}(2x^2-1) = \frac{\Gamma (a+n+1) }{n!} G_{2,2}^{2,0}\left(x^2 \left\lvert
\begin{array}{cc}
 a+b+n+1 & -n \\
 0 & b \\
\end{array}\right.
\right).
\end{align*}
\end{lemma}
\begin{proof}
This follows from \eqref{eq:incompletecanonjacobimeijerG} with $z=x^2$ and the multiplicative shift property of Meijer-G functions followed by the permutation symmetry property to swap $0$ and $b$ in the bottom row of the Meijer-G function.
\end{proof}

\section{Reduction of certain Meijer-G functions to hypergeometric form}
\label{sec:app:reduction}
\sectionmark{Reduction of Meijer-G functions}
As mentioned in \cref{sec:meijergintro}, many special-case Meijer-G function reductions for explicit parameters are known and listed in formula collections such as \cite{prudnikovVol3} and \cite{WolframFunctions2022}. This section collects a few such referenced formulae in  \cref{tab:meijergreductionappendix}. Since symbolic mathematics software such as Wolfram Mathematica is largely aware of these types of reduction formulae, it is usually possible to utilize such software on very general Meijer-G function expressions to automate such reductions.

\afterpage{%
    \clearpage
    \thispagestyle{empty}
    \newgeometry{left=1cm,right=1cm,bottom=1cm,top=1cm}
    \begin{landscape}
        \centering 
\begin{tabular}{c c c}
$\mathbf{(m,n,p,q)}$ & \textbf{Hypergeometric reduction} & \textbf{Reference}  \\ \hline \hline
(1,1,2,2) & $\begin{cases}\frac{z^{b_1} \Gamma \left(-a_1+b_1+1\right) \, _2F_1\left(-a_1+b_1+1,-a_2+b_1+1;b_1-b_2+1;z\right)}{\Gamma \left(b_1-b_2+1\right) \Gamma \left(a_2-b_1\right)},\quad |z|<1,\\ \frac{z^{a_1-1} \Gamma \left(-a_1+b_1+1\right) \, _2F_1\left(-a_1+b_1+1,-a_1+b_2+1;-a_1+a_2+1;\frac{1}{z}\right)}{\Gamma \left(-a_1+a_2+1\right) \Gamma \left(a_1-b_2\right)}, \quad |z|>1\end{cases}$ & \cite[07.34.03.0321.01]{WolframFunctions2022}\\
(1,2,2,3) & $\frac{z^{b_1} \Gamma \left(-a_1+b_1+1\right) \Gamma \left(-a_2+b_1+1\right) \, _2F_2\left(-a_1+b_1+1,-a_2+b_1+1;b_1-b_2+1,b_1-b_3+1;-z\right)}{\Gamma \left(b_1-b_2+1\right) \Gamma \left(b_1-b_3+1\right)}$ & \cite[07.34.03.0471.01]{WolframFunctions2022}\\
 (2,1,2,3) & $\frac{z^{b_1} \Gamma \left(-a_1+b_1+1\right) \, _2F_2\left(-a_1+b_1+1,-a_2+b_1+1;b_1-b_2+1,b_1-b_3+1;-z\right)}{\pi^{-1}\sin \left(\pi  \left(b_2-b_1\right)\right)\Gamma \left(b_1-b_2+1\right) \Gamma \left(b_1-b_3+1\right) \Gamma \left(a_2-b_1\right)}-\frac{z^{b_2} \Gamma \left(-a_1+b_2+1\right) \, _2F_2\left(-a_1+b_2+1,-a_2+b_2+1;-b_1+b_2+1,b_2-b_3+1;-z\right)}{\pi^{-1}\sin \left(\pi  \left(b_2-b_1\right)\right)\Gamma \left(-b_1+b_2+1\right) \Gamma \left(b_2-b_3+1\right) \Gamma \left(a_2-b_2\right)}$ & \cite[07.34.03.0831.01]{WolframFunctions2022}\\
  (2,1,2,4) & $\frac{z^{b_1} \Gamma \left(-a_1+b_1+1\right) \, _2F_3\left(-a_1+b_1+1,-a_2+b_1+1;b_1-b_2+1,b_1-b_3+1,b_1-b_4+1;-z\right)}{\pi^{-1}\sin \left(\pi  \left(b_2-b_1\right)\right) \Gamma \left(b_1-b_2+1\right) \Gamma \left(b_1-b_3+1\right) \Gamma \left(b_1-b_4+1\right) \Gamma \left(a_2-b_1\right)}-\frac{z^{b_2} \Gamma \left(-a_1+b_2+1\right) \, _2F_3\left(-a_1+b_2+1,-a_2+b_2+1;-b_1+b_2+1,b_2-b_3+1,b_2-b_4+1;-z\right)}{\pi^{-1}\sin \left(\pi  \left(b_2-b_1\right)\right) \Gamma \left(-b_1+b_2+1\right) \Gamma \left(b_2-b_3+1\right) \Gamma \left(b_2-b_4+1\right) \Gamma \left(a_2-b_2\right)}$ & \cite[07.34.03.0833.01]{WolframFunctions2022}\\
 (2,1,3,3)       & $\begin{cases} \frac{  \frac{z^{b_1} \Gamma \left(-a_1+b_1+1\right) \, _3F_2\left(-a_1+b_1+1,-a_2+b_1+1,-a_3+b_1+1;b_1-b_2+1,b_1-b_3+1;z\right)}{\Gamma \left(b_1-b_2+1\right) \Gamma \left(b_1-b_3+1\right) \Gamma \left(a_2-b_1\right) \Gamma \left(a_3-b_1\right)}-\frac{z^{b_2} \Gamma \left(-a_1+b_2+1\right) \, _3F_2\left(-a_1+b_2+1,-a_2+b_2+1,-a_3+b_2+1;-b_1+b_2+1,b_2-b_3+1;z\right)}{\Gamma \left(-b_1+b_2+1\right) \Gamma \left(b_2-b_3+1\right) \Gamma \left(a_2-b_2\right) \Gamma \left(a_3-b_2\right)}}{\pi^{-1} \sin \left(\pi  \left(b_2-b_1\right)\right)}, \quad |z|<1 \\ \tfrac{z^{a_1-1} \Gamma \left(-a_1+b_1+1\right) \Gamma \left(-a_1+b_2+1\right)  \, _3F_2\left(-a_1+b_1+1,-a_1+b_2+1,-a_1+b_3+1;-a_1+a_2+1,-a_1+a_3+1;\frac{1}{z}\right)}{\Gamma \left(-a_1+a_2+1\right) \Gamma \left(-a_1+a_3+1\right) \Gamma \left(a_1-b_3\right)} \ ,\quad |z|>1\end{cases}$    & \cite[07.34.03.0850.01]{WolframFunctions2022}
\end{tabular}
\captionof{table}{Reproduction of some hypergeometric reduction formulae for special cases of the Meijer-G function $G^{m,n}_{p,q} \left(z\left|
\begin{array}{c}
 \mathbf{a} \\
 \mathbf{b} \\
\end{array}\right.
\right)$. The formulas are valid wherever the stated reduction formula is well defined, e.g.~whenever $b_2-b_1 \notin \mathbb{Z}$ for the third, fourth and fifth rows. See the references for the explicit conditions for each row.}
\label{tab:meijergreductionappendix}
    \end{landscape}
    \clearpage
}
\restoregeometry

\printbibliography

@article{Adcock2019,
  title={Frames and numerical approximation},
  author={Adcock, Ben and Huybrechs, Daan},
  journal={SIAM Review},
  volume={61},
  number={3},
  pages={443--473},
  year={2019},
  publisher={SIAM},
  doi={10.1137/17M1114697}
}

@article{pagnini2023mellin,
  title={Mellin definition of the fractional {L}aplacian},
  author={Pagnini, Gianni and Runfola, Claudio},
  journal={Fractional Calculus and Applied Analysis},
  pages={1--17},
  year={2023},
  publisher={Springer},
  doi={10.1007/s13540-023-00190-z}
}

@article{beals2013meijer,
  title={Meijer {G}-{Functions}: {A} {Gentle} {Introduction}},
  author={Beals, Richard and Szmigielski, Jacek},
  journal={Notices of the AMS},
  volume={60},
  number={7},
  pages={866--872},
  year={2013}
}

@article{carrillo2023radial,
  title={From radial symmetry to fractal behavior of aggregation equilibria for repulsive--attractive potentials},
  author={Carrillo, Jos{\'e} A and Shu, Ruiwen},
  journal={Calculus of Variations and Partial Differential Equations},
  volume={62},
  number={1},
  pages={28},
  year={2023},
  publisher={Springer},
  doi={10.1007/s00526-022-02368-4}
}

@article{Jcarrilloradial,
title = {Explicit equilibrium solutions for the aggregation equation with power-law potentials},
journal = {Kinetic and Related Models},
volume = {10},number = {1},pages = {171-192},
year = {2017},
issn = {1937-5093},
doi = {10.3934/krm.2017007},
author = {José A. Carrillo and Yanghong Huang}
}

@article{gutleb2023polynomial,
  title={Polynomial and rational measure modifications of orthogonal polynomials via infinite-dimensional banded matrix factorizations},
  author={Gutleb, Timon S and Olver, Sheehan and Slevinsky, Richard Mikael},
  journal={Foundations of Computational Mathematics},
  pages={1--43},
  year={2024},
  publisher={Springer},
  doi={10.1007/s10208-024-09671-w}
}

@inproceedings{thibos_standards_2000,
	address = {Santa Fe, New Mexico},
	title = {Standards for Reporting the Optical Aberrations of Eyes},
	doi = {10.1364/VSIA.2000.SuC1},
	booktitle = {Vision {Science} and its {Applications}},
	publisher = {OSA},
	author = {Thibos, L. N. and Applegate, R. A. and Schwiegerling, J. T. and Webb, R. and {VSIA Standards Taskforce Members}},
	year = {2000}
}

@article{roddier_atmospheric_1990,
  author   = {Roddier, N. A.},
  journal  = {Optical Engineering},
  title    = {Atmospheric wavefront simulation using {Zernike} polynomials},
  year     = {1990},
  number   = {10},
  pages    = {1174--1180},
  volume   = {29},
  doi      = {10.1117/12.55712},
  publisher = {SPIE}
}

@Article{mahajan_zernike_1994,
  author  = {Mahajan, V. N.},
  journal = {Applied Optics},
  title   = {Zernike circle polynomials and optical aberrations of systems with circular pupils},
  year    = {1994},
  number  = {34},
  pages   = {8121},
  volume  = {33},
  doi     = {10.1364/AO.33.008121},
}

@Article{rocha_effects_2007,
  author  = {Rocha, K. M. and Vabre, L. and Harms, F. and Chateau, N. and Krueger, R. R.},
  journal = {Journal of Refractive Surgery},
  title   = {Effects of {Zernike} wavefront aberrations on visual acuity measured using electromagnetic adaptive optics technology},
  year    = {2007},
  number  = {9},
  pages   = {953--959},
  volume  = {23},
  doi     = {10.3928/1081-597X-20071101-17},
}

@article{olver2020fast,
  title={Fast algorithms using orthogonal polynomials},
  author={Olver, Sheehan and Slevinsky, Richard Mika{\"e}l and Townsend, Alex},
  journal={Acta Numerica},
  volume={29},
  pages={573--699},
  year={2020},
  publisher={Cambridge University Press},
  doi={10.1017/S0962492920000045}
}

@book{dunkl_orthogonal_2014,
	address = {Cambridge},
	edition = {Second edition},
	series = {Encyclopedia of mathematics and its applications},
	title = {Orthogonal polynomials of several variables},
	isbn = {9781107071896},
	number = {155},
	publisher = {Cambridge University Press},
	author = {Dunkl, C. F. and Xu, Y.},
	year = {2014},
}

@article{kravchenko2017representation,
  title={Representation of solutions to the one-dimensional {Schr{\"o}dinger} equation in terms of {Neumann} series of {B}essel functions},
  author={Kravchenko, Vladislav V and Navarro, Luis J and Torba, Sergii M},
  journal={Applied Mathematics and Computation},
  volume={314},
  pages={173--192},
  year={2017},
  publisher={Elsevier},
  doi={10.1016/j.amc.2017.07.006}
}

@article{jiang2008efficient,
  title={Efficient representation of nonreflecting boundary conditions for the time-dependent {Schr{\"o}dinger} equation in two dimensions},
  author={Jiang, Shidong and Greengard, Leslie},
  journal={Communications on Pure and Applied Mathematics: A Journal Issued by the Courant Institute of Mathematical Sciences},
  volume={61},
  number={2},
  pages={261--288},
  year={2008},
  publisher={Wiley Online Library},
  doi={10.1002/cpa.20200}
}

@book{korenev2002bessel,
  title={Bessel functions and their applications},
  author={Korenev, Boris Grigorʹevich},
  year={2002},
  publisher={CRC Press},
  isbn={978-0367454852}
}

@article{sasaki2016one,
  title={One-dimensional {Schr{\"o}dinger} equation with non-analytic potential $V(x)=-g^2\exp(-|x|)$ and its exact {Bessel}-function solvability},
  author={Sasaki, Ryu and Znojil, Miloslav},
  journal={Journal of Physics A: Mathematical and Theoretical},
  volume={49},
  number={44},
  pages={445303},
  year={2016},
  publisher={IOP Publishing},
  doi={10.1088/1751-8113/49/44/445303}
}

@article{okada2006theory,
  title={Theory of efficient array observations of microtremors with special reference to the {SPAC} method},
  author={Okada, Hiroshi},
  journal={Exploration Geophysics},
  volume={37},
  number={1},
  pages={73--85},
  year={2006},
  publisher={Taylor \& Francis},
  doi={10.1071/EG06073}
}

@article{chavez2005alternative,
  title={An alternative approach to the {SPAC} analysis of microtremors: exploiting stationarity of noise},
  author={Ch{\'a}vez-Garc{\'\i}a, Francisco J and Rodr{\'\i}guez, Miguel and Stephenson, William R},
  journal={Bulletin of the Seismological Society of America},
  volume={95},
  number={1},
  pages={277--293},
  year={2005},
  publisher={Seismological Society of America},
  doi={10.1785/0120030179}
}

@article{kittel1968x,
  title={{X-ray diffraction from helices: Structure of DNA}},
  author={Kittel, C.},
  journal={American Journal of Physics},
  volume={36},
  number={7},
  pages={610--616},
  year={1968},
  publisher={American Association of Physics Teachers},
  doi={10.1119/1.1975029}
}

@article{garbaczewski2019fractional,
  title={{Fractional Laplacians in bounded domains: Killed, reflected, censored, and taboo L{\'e}vy flights}},
  author={Garbaczewski, Piotr and Stephanovich, Vladimir},
  journal={Physical Review E},
  volume={99},
  number={4},
  pages={042126},
  year={2019},
  publisher={APS},
  doi={10.1103/PhysRevE.99.042126}
}

@article{townsend2018fast,
  title={{Fast polynomial transforms based on Toeplitz and Hankel matrices}},
  author={Townsend, Alex and Webb, Marcus and Olver, Sheehan},
  journal={Mathematics of Computation},
  volume={87},
  number={312},
  pages={1913--1934},
  year={2018},
  doi={10.1090/mcom/3277}
}

@article{bateman1909solution,
  title={The solution of linear differential equations by means of definite integrals},
  author={Bateman, Harry},
  journal={Transactions of the Cambridge Philosophical Society},
  volume={21},
  pages={171--196},
  year={1909}
}

@article{chen2016generalized,
  title={Generalized {Jacobi} functions and their applications to fractional differential equations},
  author={Chen, Sheng and Shen, Jie and Wang, Li-Lian},
  journal={Mathematics of Computation},
  volume={85},
  number={300},
  pages={1603--1638},
  year={2016},
  doi={10.1090/mcom3035}
}

@article{kilbas2016generalized,
  title={{The generalized hypergeometric function as the Meijer G-function}},
  author={Kilbas, Anatoly A and Saxena, Ram K and Saigo, Megumi and Trujillo, Juan J},
  journal={Analysis},
  volume={36},
  number={1},
  pages={1--14},
  year={2016},
  publisher={Oldenbourg Wissenschaftsverlag},
  doi={10.1515/anly-2015-5001}
}

@book{bateman1953higher,
  title={Higher transcendental functions},
  author={Bateman, Harry},
  volume={1},
  year={1953},
  publisher={McGraw-Hill Book Company}
}

@book{paris2001asymptotics,
  title={Asymptotics and {M}ellin--{B}arnes integrals},
  author={Paris, Richard B and Kaminski, David},
  volume={85},
  year={2001},
  publisher={Cambridge University Press},
  isbn={9780511546662},
  doi={10.1017/CBO9780511546662}
}

@book{prudnikovVol3,
  title={{Integrals} and {Series}, {Volume} 3: {More} {Special} {Functions}},
  author={Prudnikov, A. P. and Brychkov, Yu. A. and Marichev, O. I.},
  year={1990},
  publisher={Gordon and Breach Science Publishers},
  location={New York},
  isbn={2881246826}
}

@article{fall2021morse,
  title={Morse index versus radial symmetry for fractional {D}irichlet problems},
  author={Fall, Mouhamed Moustapha and Feulefack, Pierre Aime and Temgoua, Remi Yvant and Weth, Tobias},
  journal={Advances in Mathematics},
  volume={384},
  pages={107728},
  year={2021},
  publisher={Elsevier},
  doi={10.1016/j.aim.2021.107728}
}

@article{banuelos2004cauchy,
  title={{The Cauchy process and the Steklov problem}},
  author={Ba{\~n}uelos, Rodrigo and Kulczycki, Tadeusz},
  journal={Journal of Functional Analysis},
  volume={211},
  number={2},
  pages={355--423},
  year={2004},
  publisher={Elsevier},
  doi={10.1016/j.jfa.2004.02.005}
}

@article{dyda2017eigenvalues,
  title={Eigenvalues of the fractional {L}aplace operator in the unit ball},
  author={Dyda, Bart{\l}omiej and Kuznetsov, Alexey and Kwa{\'s}nicki, Mateusz},
  journal={Journal of the London Mathematical Society},
  volume={95},
  number={2},
  pages={500--518},
  year={2017},
  publisher={Wiley Online Library},
  doi={10.1112/jlms.12024}
}

@book{oldham2009atlas,
  title={An atlas of functions: with equator, the atlas function calculator},
  author={Oldham, Keith B and Myland, Jan and Spanier, Jerome},
  year={2009},
  publisher={Springer},
  isbn={978-0-387-48806-6},
  doi={10.1007/978-0-387-48807-3}
}

@article{Adcock2020,
  title={Frames and numerical approximation {II}: generalized sampling},
  author={Adcock, Ben and Huybrechs, Daan},
  journal={Journal of Fourier Analysis and Applications},
  volume={26},
  number={6},
  pages={1--34},
  year={2020},
  publisher={Springer},
  doi={10.1007/s00041-020-09796-w}
}

@article{biler2011barenblatt,
  title={Barenblatt profiles for a nonlocal porous medium equation},
  author={Biler, Piotr and Imbert, Cyril and Karch, Grzegorz},
  journal={Comptes Rendus Mathematique},
  volume={349},
  number={11-12},
  pages={641--645},
  year={2011},
  publisher={Elsevier},
  doi={10.1016/j.crma.2011.06.003}
}

@article{biler2015nonlocal,
  title={The nonlocal porous medium equation: Barenblatt profiles and other weak solutions},
  author={Biler, Piotr and Imbert, Cyril and Karch, Grzegorz},
  journal={Archive for Rational Mechanics and Analysis},
  volume={215},
  number={2},
  pages={497--529},
  year={2015},
  publisher={Springer},
  doi={10.1007/s00205-014-0786-1}
}

@book{landkof1972foundations,
  title={Foundations of modern potential theory},
  author={Landkof, Naum Samo{\u\i}lovich},
  volume={180},
  year={1972},
  publisher={Springer},
  isbn={978-3-642-65185-4}
}

@article{huang2014explicit,
  title={{Explicit Barenblatt profiles for fractional porous medium equations}},
  author={Huang, Yanghong},
  journal={Bulletin of the London Mathematical Society},
  volume={46},
  number={4},
  pages={857--869},
  year={2014},
  publisher={Oxford University Press},
  doi={10.1112/blms/bdu045}
}

@article{gutleb2020computing,
  title={Computing equilibrium measures with power law kernels},
  author={Gutleb, T. S. and Carrillo, J. A. and Olver, S.},
  journal={Mathematics of Computation},
  year={2021},
  publisher={American Mathematical Society},
  doi={10.1090/mcom/3740}
}

@article{dyda2017fractional,
  title={{Fractional Laplace operator and Meijer G-function}},
  author={Dyda, Bart{\l}omiej and Kuznetsov, Alexey and Kwa{\'s}nicki, Mateusz},
  journal={Constructive Approximation},
  volume={45},
  number={3},
  pages={427--448},
  year={2017},
  publisher={Springer},
  doi={10.1007/s00365-016-9336-4}
}

@article{papadopoulos2022sparse,
  title={A sparse spectral method for fractional differential equations in one-spatial dimension},
  author={Papadopoulos, Ioannis PA and Olver, Sheehan},
  journal={Advances in Computational Mathematics},
  volume={50},
  number={4},
  pages={69},
  year={2024},
  publisher={Springer},
  doi={10.1007/s10444-024-10164-1}
}

@article{gutleb2022computation,
  title={Computation of power law equilibrium measures on balls of arbitrary dimension},
  author={Gutleb, T. S. and Carrillo, J. A. and Olver, S.},
  journal={Constructive Approximation},
  pages={1--46},
  year={2022},
  publisher={Springer},
  doi={10.1007/s00365-022-09606-0}
}

@misc{nist_2018,
	author = {{F.W.J.} Olver and {A.B.O.} Daalhuis and {D.W.} Lozier and {B.I.} Schneider and {R.F.} Boisvert and {C.W.} Clark and {B.R.} Miller and {B. V.} Saunders (eds.)},
	month = dec,
	note = {http://dlmf.nist.gov},
	shorttitle = {{NIST} {DigitalMath} {Lib}},
	title = {{NIST} {Digital} {Library} of {Mathematical} {Functions}},
	volume = {Version 1.1.8},
	year = {2022}}

@book{King2009a, 
	place={Cambridge}, 
	series={Encyclopedia of Mathematics and its Applications}, 
	title={Hilbert {T}ransforms: {V}olume 1}, 
	volume={1}, 
	doi={10.1017/CBO9780511721458}, 
	publisher={Cambridge University Press}, 
	author={King, Frederick W.}, 
	year={2009}, 
	collection={Encyclopedia of Mathematics and its Applications}
}

@article{Kwasnicki2017,
  title={Ten equivalent definitions of the fractional {L}aplace operator},
  author={Kwa{\'s}nicki, Mateusz},
  journal={Fractional Calculus and Applied Analysis},
  volume={20},
  number={1},
  pages={7--51},
  year={2017},
  publisher={De Gruyter},
  doi={10.1515/fca-2017-0002}
}

@article{Di2012,
  title={Hitchhiker's guide to the fractional {S}obolev spaces},
  author={Di Nezza, Eleonora and Palatucci, Giampiero and Valdinoci, Enrico},
  journal={Bulletin des Sciences Math{\'e}matiques},
  volume={136},
  number={5},
  pages={521--573},
  year={2012},
  publisher={Elsevier},
  doi={10.1016/j.bulsci.2011.12.004}
}

@article{Li2021,
  title={Efficient {H}ermite Spectral-{G}alerkin Methods for Nonlocal Diffusion Equations in Unbounded Domains},
  author={Li, Huiyuan and Liu, Ruiqing and Wang, Li-Lian},
  journal={Numerical Mathematics: Theory, Methods and Applications},
  year={2022},
  doi={10.4208/nmtma.OA-2022-0007s}
}

@misc{WolframFunctions2022,
	author = {{Wolfram Research, Inc.}},
	title = {{The} {Mathematical} {Functions} {Site}},
	url = {functions.wolfram.com},
	urldate = {2022-09-21},
	year = {2022},
	bdsk-url-1 = {functions.wolfram.com}}

@article{riesz1938integrales,
  title={Int{\'e}grales de {Riemann-Liouville} et potentiels},
  author={Riesz, Marcel},
  journal={Acta Scientiarum Mathematicarum, Acta Universitatis Szegediensis},
  volume={9},
  pages={1--42},
  year={1938}
}

@article{riesz1938rectification,
  title={Rectification au travail {'Int{\'e}grales de Riemann-Liouville et potentiels'}},
  author={Riesz, Marcel},
  journal={Acta Scientiarum Mathematicarum, Acta Universitatis Szegediensis},
  year={1938}
}

@article{blumenthal1961distribution,
  title={On the distribution of first hits for the symmetric stable processes},
  author={Blumenthal, Robert M and Getoor, Ronald K and Ray, DB},
  journal={Transactions of the American Mathematical Society},
  volume={99},
  number={3},
  pages={540--554},
  year={1961},
  doi={10.2307/1993561}
}

@inproceedings{hmissi1994fonctions,
  title={Fonctions harmoniques pour les potentiels de {Riesz} sur la boule unit{\'e}},
  author={Hmissi, Farida},
  booktitle={Expositiones Mathematicae},
  volume={12},
  number={3},
  pages={281--288},
  year={1994}
}

@article{bogdan1999representation,
  title={Representation of $\alpha$-harmonic functions in {Lipschitz} domains},
  author={Bogdan, Krzysztof},
  journal={Hiroshima Mathematical Journal},
  volume={29},
  number={2},
  pages={227--243},
  year={1999},
  publisher={Hiroshima University},
  doi={10.32917/hmj/1206125005}
}

@book{rubin1996fractional,
  title={Fractional integrals and potentials},
  author={Rubin, Boris},
  volume={82},
  year={1996},
  publisher={CRC Press},
  isbn={978-0582253414}
}

@Article{treeby2010k,
  author    = {Treeby, Bradley E and Cox, Benjamin T},
  title     = {$k$-Wave: {MATLAB} toolbox for the simulation and reconstruction of photoacoustic wave fields},
  journal   = {Journal of Biomedical Optics},
  year      = {2010},
  volume    = {15},
  number    = {2},
  pages     = {021314--021314},
  publisher = {Society of Photo-Optical Instrumentation Engineers},
  doi={10.1117/1.3360308}
}

@article{treeby2012modeling,
  title={Modeling nonlinear ultrasound propagation in heterogeneous media with power law absorption using a k-space pseudospectral method},
  author={Treeby, Bradley E and Jaros, Jiri and Rendell, Alistair P and Cox, BT},
  journal={The Journal of the Acoustical Society of America},
  volume={131},
  number={6},
  pages={4324--4336},
  year={2012},
  publisher={Acoustical Society of America},
  doi={10.1121/1.4712021}
}

@inproceedings{treeby2014modelling,
  title={Modelling elastic wave propagation using the k-{W}ave {MATLAB} toolbox},
  author={Treeby, Bradley E and Jaros, Jiri and Rohrbach, Daniel and Cox, BT},
  booktitle={2014 IEEE international ultrasonics symposium},
  pages={146--149},
  year={2014},
  organization={IEEE},
  doi={10.1109/ULTSYM.2014.0037}
}

@article{treeby2018rapid,
  title={Rapid calculation of acoustic fields from arbitrary continuous-wave sources},
  author={Treeby, Bradley E and Budisky, Jakub and Wise, Elliott S and Jaros, Jiri and Cox, BT},
  journal={The Journal of the Acoustical Society of America},
  volume={143},
  number={1},
  pages={529--537},
  year={2018},
  publisher={Acoustical Society of America},
  doi={10.1121/1.5021245}
}

@article{abatangelo_fractional_2021,
	title = {Fractional {Laplacians} on ellipsoids},
	volume = {3},
	issn = {2640-3501},
	doi = {10.3934/mine.2021038},
	number = {5},
	journal = {Mathematics in Engineering},
	author = {Abatangelo, Nicola and Jarohs, Sven and Saldaña, Alberto},
	year = {2021},
	pages = {1--34},
}

@misc{papadopoulos2023frame,
  title={A frame approach for equations involving the fractional {L}aplacian}, 
  author={Papadopoulos, Ioannis P. A. and Gutleb, Timon S. and Carrillo, Jos{\'e} A. and Olver, Sheehan},
  year={2023},
  eprint={2311.12451},
  archivePrefix={arXiv},
  primaryClass={math.NA}
}

@misc{fractionalframes.jl,
title={{FractionalFrames.jl}},
year={2023},
version = {0.0.1},
url={https://github.com/ioannisPApapadopoulos/FractionalFrames.jl},
}

@article{Slevinsky2019,
  title={Fast and backward stable transforms between spherical harmonic expansions and bivariate {F}ourier series},
  author={Slevinsky, Richard Mika{\"e}l},
  journal={Applied and Computational Harmonic Analysis},
  volume={47},
  number={3},
  pages={585--606},
  year={2019},
  publisher={Elsevier},
  doi={10.1016/j.acha.2017.11.001}
}

@misc{FastTransforms,
title={Fast{T}ransforms},
author={Slevinsky, Richard Mika{\"e}l},
year={2018},
url={https://github.com/MikaelSlevinsky/FastTransforms}
}

@software{fractionalframes-zenodo,
  author       = {Papadopoulos, Ioannis P.~A.},
  title        = {{ioannisPApapadopoulos/FractionalFrames.jl}},
  month        = nov,
  year         = 2023,
  publisher    = {Zenodo},
  version      = {0.0.1},
  doi          = {10.5281/zenodo.10151679}
}

@article{king_modelling_2024,
  title={Modelling power-law ultrasound absorption using a time-fractional, static memory, {F}ourier pseudo-spectral method},
  author={King, Matthew J and Gutleb, Timon S and Treeby, BE and Cox, BT},
  journal={The Journal of the Acoustical Society of America},
  volume={157},
  number={3},
  pages={1761--1771},
  year={2025},
  publisher={AIP Publishing},
  doi={10.1121/10.0035937}
}

@article{gutleb2023static,
  title={A static memory sparse spectral method for time-fractional {PDE}s},
  author={Gutleb, Timon S and Carrillo, Jos{\'e} A},
  journal={Journal of Computational Physics},
  volume={494},
  pages={112522},
  year={2023},
  publisher={Elsevier},
  doi={10.1016/j.jcp.2023.112522}
}

@software{meijergjl-zenodo,
  author       = {Gutleb, Timon S.},
  title        = {{MeijerG.jl}},
  year         = 2026,
  publisher    = {Zenodo},
  doi          = {10.5281/zenodo.19430427}
}

@software{explicitfractionallaplacianrieszexamples-zenodo,
  author       = {Gutleb, Timon S.},
  title        = {{ExplicitFractionalLaplacianRieszExamples.jl}},
  year         = 2026,
  publisher    = {Zenodo},
  version      = {0.1},
  doi          = {10.5281/zenodo.21360098}
}
\end{document}